\documentclass[a4paper,12pt,leqno]{amsart}
\usepackage{amsmath,amscd,amssymb,amsfonts}
\usepackage[matrix,arrow]{xy}
\usepackage{xcolor,hyperref}
\hypersetup{colorlinks=true,citecolor=black,linkcolor=black}
\newcommand{\ms}{\par\medskip}
\newcommand{\sk}{\par\smallskip}
\newcommand{\nin}{\par\noindent}
\newcommand{\q}{\quad}
\newcommand{\h}{\hbox}
\newcommand{\bl}{\bigl}
\newcommand{\br}{\bigr}
\newcommand{\msum}{\hbox{$\sum$}}
\newcommand{\motim}{\hbox{$\bigotimes$}}
\newcommand{\mopl}{\hbox{$\bigoplus$}}
\newcommand{\mcup}{\hbox{$\bigcup$}}
\newcommand{\ssc}{\,\raise.2ex\hbox{${\scriptstyle\circ}$}\,}
\newcommand{\ssb}{\raise.2ex\hbox{${\scriptscriptstyle\bullet}$}}
\newcommand{\eps}{\varepsilon}
\newcommand{\psim}{{}^{\mathfrak m}\psi}
\newcommand{\phim}{{}^{\mathfrak m\!}\varphi}
\newcommand{\rd}{\partial}
\newcommand{\C}{{\mathbb C}}
\newcommand{\N}{{\mathbb N}}
\newcommand{\Z}{{\mathbb Z}}
\newcommand{\R}{{\mathbb R}}
\newcommand{\Q}{{\mathbb Q}}
\newcommand{\DD}{{\mathbb D}}
\newcommand{\PP}{{\mathbb P}}
\newcommand{\A}{{\mathcal A}}
\newcommand{\Cc}{{\mathcal C}}
\newcommand{\D}{{\mathcal D}}
\newcommand{\I}{{\mathcal I}}
\newcommand{\Hc}{{\mathcal H}}

\newcommand{\M}{{\mathcal M}}
\newcommand{\Oc}{{\mathcal O}}
\newcommand{\Lo}{{}\,\overline{\!L}{}}
\newcommand{\Mo}{{}\,\overline{\!M}{}}
\newcommand{\Yo}{\overline{Y}}
\newcommand{\Sh}{\widehat{S}}
\newcommand{\Yh}{\widehat{Y}}
\newcommand{\Bt}{\widetilde{B}}
\newcommand{\Dt}{\widetilde{D}}
\newcommand{\Kt}{\widetilde{K}}
\newcommand{\Pt}{\widetilde{\PP}}
\newcommand{\St}{\widetilde{S}}
\newcommand{\Yt}{\widetilde{Y}}
\newcommand{\Wt}{\widetilde{W}}
\newcommand{\Spe}{{\mathcal S}pecan}
\newcommand{\Sp}{{\rm Sp}}
\newcommand{\Gr}{{\rm Gr}}
\newcommand{\can}{{\rm can}}
\newcommand{\Var}{{\rm Var}}
\newcommand{\Ker}{{\rm Ker}}
\newcommand{\Coker}{{\rm Coker}}
\newcommand{\Imm}{{\rm Im}}
\newcommand{\mon}{{\rm mon}}
\newcommand{\MHM}{{\rm MHM}}
\newcommand{\DR}{{\rm DR}}
\newcommand{\supp}{{\rm supp}}
\newcommand{\id}{{\it id}}
\newcommand{\pt}{{\it pt}}
\newcommand{\eq}{\,{=}\,}
\newcommand{\defs}{\,{:=}\,}
\newcommand{\simto}{\,\,\rlap{\hskip1.5mm\raise1.4mm\hbox{$\sim$}}\hbox{$\longrightarrow$}\,\,}
\newcommand{\simot}{\buildrel\sim\over\longleftarrow}
\newcommand{\into}{\hookrightarrow}
\newcommand{\ges}{\geqslant}
\newcommand{\les}{\leqslant}

\makeatletter
\renewcommand\section{\@startsection{section}{1}{0pt}{-3ex plus -1ex minus -.2ex}{2.3ex plus.2ex}{\centering\normalfont\bfseries}}
\makeatother
\theoremstyle{plain}
\newtheorem{thm}{Theorem}[section]
\newtheorem{cor}[thm]{Corollary}
\newtheorem{prop}[thm]{Proposition}
\newtheorem{lem}[thm]{Lemma}

\theoremstyle{definition}
\newtheorem{rem}[thm]{Remark}

\begin{document}
\title{Thom-Sebastiani Theorem for Hodge Modules}
\author{Morihiko Saito}
\address{RIMS Kyoto University, Kyoto 606-8502 Japan}
\begin{abstract}
We give a proof of the Thom-Sebastiani theorem for mixed Hodge modules using a compatibility with Verdier specialization.
\end{abstract}
\maketitle
\section*{Introduction}
Let $f_1$, $f_2$ be holomorphic functions on complex manifolds $X_1$, $X_2$ respectively.
Set $f=f_1+f_2$ on $X:=X_1{\times}X_2$.
The Thom-Sebastiani theorem describes the Milnor cohomology of $f$ in terms of those for $f_1$ and $f_2$.
This was proved by M.~Sebastiani and R.~Thom \cite{ST} in the isolated singularity case, by M.~Oka \cite{Ok} in the weighted homogeneous case, and by K.~Sakamoto \cite{Sak} in the general case.
\sk
This theorem has been generalized further by many people; the motivic theory of J.~Denef and F.~Loeser \cite{DL} for instance implies the local and global Thom-Sebastiani type theorems for the spectrum, that is, for the equivariant (limit) mixed Hodge numbers, see also \cite{Lo}, \cite{Ne}, \cite{NS}, \cite{SS}, \cite{Va} among others.
The theorem for bounded sheaf complexes with constructible cohomology sheaves has been shown in \cite{Mas}, \cite{Sch}, see also Theorem~\ref{T2.5} in this paper.
\sk
Forgetting the weight filtration, the theorem for the underlying filtered $\D$-modules of mixed Hodge nodules has been proved in \cite{MSS}.
We give a proof of the theorem with the weight filtration by employing a compatibility with Verdier specialization \cite{Ve}, which is based on discussions with P.~Deligne, see Theorems~\ref{T5.3} and also \ref{T2.5}.
There is however a certain twist between the two Thom-Sebastiani isomorphisms for the underlying $\Q$-complexes and $\D$-modules (see Section~\ref{S5}), and this makes the argument rather complicated.
\sk
The first version of this manuscript was written after the discussions with Deligne rather rapidly during the author's stay at the Institute for Advanced Study in 1988-89, and was typeset at RIMS in 1990 by using Plain Tex.
The first version also contained arguments about the nearby cycles, which were rather technical, and were omitted in this version to simplify the explanations.
We thank P.~Deligne for useful discussions, and the secretaries at RIMS for their excellent typing of the first version.
We also thank M.~Kontsevich and Y.~Soibelman for asking us about this long forgotten manuscript (see also \cite{KS}).

\tableofcontents
\numberwithin{equation}{section}

\section{Specialization of A-complexes} \label{S1}

In this paper analytic spaces are assumed to be separated and reduced.
We denote by $S$ the standard one-dimensional complex plane $\Spe\,\C[s]$, where standard means that $S$ has the natural coordinate $s$.
In Section~\ref{S1}, $A$ is a commutative noetherian ring.
\sk
Let $X$ be a closed subspace (not necessarily reduced) of a complex analytic space $Y$, and $p:D_XY\to S$ the deformation of $Y$ to the normal cone $C_XY$, i.e.
\begin{equation} \label{1.1}
\begin{aligned}
D_XY&:=\Spe_Y\bl(\mopl_{j\in\Z}\,\I^j_X\otimes s^{-j}\br),\\ C_XY&:=\Spe_X\bl(\mopl_{j\ges 0}\,\I^j_X/\I^{j+1}_X\br)=p^{-1}(0),
\end{aligned}
\end{equation}
where $\I_X$ is the ideal of $X,\I^j_X=\Oc_X$ for $j\les 0$, $p$ is the natural projection of $D_XY$ onto $S=\Spe\,\C[s]$, and $0$ is the origin of $S$.
(In general, $\Spe_Y\A$ for an $\Oc_Y$-algebra $\A$ of locally finite type (which is quasi-coherent over $\Oc_Y$) is defined by taking a locally defined surjection from a polynomial ring to $\A$ since the kernel is locally finitely generated.)
\sk
Let $q:D_XY\to Y$ be a natural morphism, and $q'$ its restriction to $p^{-1}(S^*)$, where $S^*=S\setminus\{0\}$.
Then we have a canonical isomorphism
\begin{equation*}
p{\times}q':p^{-1}(S^*)\buildrel\sim\over\longrightarrow S^*{\times}Y.
\end{equation*}
The Verdier specialization \cite{Ve} is defined by
\begin{equation*}
\Sp_XK=\psi_p q^*K\qquad{\rm for}\quad K\in D(A_Y),
\end{equation*}
see \cite{D3} for the definition of $\psi_p$.
\sk
Let $(Y{\times}S)'$ be the blow-up of $Y{\times}S$ along $X{\times}\{0\}$. Note that $D_XY$ is the affine open subset of $(Y{\times}S)'$ over $Y$ whose complement is the proper transform of $Y{\times}\{0\}$.
\ms
Luc Illusie informed us that the $\ell$-adic version of the next proposition was used in Gabber's proof of the local purity of the intersection complexes mentioned in \cite{BBD}.

\begin{prop} \label{P1.1}
Let $f_a:Y_a\to S\,\,(a=1,2)$ be a holomorphic function on a complex analytic space $Y_a$.
Put $X_a=f_a^{-1}(0)$.
Let $Y=Y_1{\times}_S Y_2$ with $\delta':Y\to Y_1{\times}Y_2$, $f:Y\to S$ canonical morphisms so that $X:=X_1{\times}X_2=f^{-1}(0)$.
Then, for $K_a\in D^b_c(A_{Y_a})\,(a=1,2)$, there is a canonical isomorphism in $D_c(A_X)$
\begin{equation} \label{1.2}
\psi_{f_1}K_1\boxtimes\psi_{f_2}K_2\simto\psi_f\delta'^* (K_1\boxtimes K_2).
\end{equation}
\end{prop}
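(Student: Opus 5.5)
We will prove this using Deligne's definition of nearby cycles, $\psi_gK=i^*Rj_*\pi^*j^*K$ with $\pi:\widetilde{S^*}\to S^*$ the universal cover and $i,j$ the inclusions of $g^{-1}(0)$ and its complement. The point is that $Y$ is the fibre product over $S$. Its universal-cover pullback is therefore $\widetilde Y^*=\widetilde Y_1^*\times_{\widetilde{S^*}}\widetilde Y_2^*$, where $\widetilde Y_a^*=Y_a^*\times_{S^*}\widetilde{S^*}$ and $Y_a^*=Y_a\setminus X_a$.

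\emph{Step 1: construct the morphism.} Write $\tilde\jmath_a:\widetilde Y_a^*\to Y_a$ and $\tilde\jmath:\widetilde Y^*\to Y$ for the natural maps, and $\tilde\delta':\widetilde Y^*\to\widetilde Y_1^*\times\widetilde Y_2^*$ for the closed embedding (a diagonal over $\widetilde{S^*}$). Base change and adjunction give a chain of canonical morphisms
\begin{equation*}
R\tilde\jmath_{1*}\tilde\jmath_1^*K_1\boxtimes R\tilde\jmath_{2*}\tilde\jmath_2^*K_2\to R(\tilde\jmath_1\times\tilde\jmath_2)_*(\tilde\jmath_1^*K_1\boxtimes\tilde\jmath_2^*K_2)\to R(\tilde\jmath_1\times\tilde\jmath_2)_*R\tilde\delta'_*\tilde\delta'^*(\tilde\jmath_1^*K_1\boxtimes\tilde\jmath_2^*K_2).
\end{equation*}
The last term is $\delta'_*R\tilde\jmath_*\tilde\jmath^*\delta'^*(K_1\boxtimes K_2)$. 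Applying $i_X^*$ and adjunction for $\delta'$ then yields the morphism \eqref{1.2}. Because every arrow is canonical, the morphism is compatible with monodromy, with $T=T_1\otimes T_2$. Since $X_1\times X_2=f^{-1}(0)\subset Y$, the restriction to $X$ is harmless.

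\emph{Step 2: reduce to a local computation.} Being an isomorphism is a stalkwise condition, so we fix $x=(x_1,x_2)\in X$. We choose Milnor data: small balls $B_a$ around $x_a$ and $0<\eta\ll\eps$, such that the stalk $(\psi_{f_a}K_a)_{x_a}$ is $R\Gamma(B_a\cap f_a^{-1}(t),K_a)$ for $0<|t|<\eta$. This uses constructibility of $K_a$ and a Whitney stratification adapted to $f_a$. We would like to say the same for $f$ on $Y$ with the polydisc $B_1\times B_2$, so that
\begin{equation*}
(\psi_f\delta'^*(K_1\boxtimes K_2))_x=R\Gamma\bl((B_1\cap f_1^{-1}(t))\times(B_2\cap f_2^{-1}(t)),K_1\boxtimes K_2\br).
\end{equation*}
Indeed, $f^{-1}(t)\cap Y$ is exactly the product of the two Milnor fibres. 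Granting this, the Künneth formula for constructible complexes over the noetherian ring $A$ gives the tensor product of the two stalks, and one checks that this identification agrees with the morphism of Step 1.

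\emph{Main obstacle.} The hard part is justifying that the polydisc $B_1\times B_2$, rather than a round ball, computes the stalk of $\psi_f$ on the singular space $Y$. Equivalently, we need $R\Gamma(B\cap f^{-1}(D^*_\eta),-)$ to be the cohomology of a fibration over $D^*_\eta$ with fibre the product Milnor fibre. The approach we would try first is to note that $(Y,f)$ over $D_\eta$ is the fibre product of the two Milnor fibrations $B_a\cap f_a^{-1}(D_\eta^*)\to D_\eta^*$. These are locally trivial stratified fibrations, with the boundary $\partial B_a$ transverse by the choice $\eta\ll\eps$. Their fibre product over $D^*_\eta$ is therefore again a locally trivial stratified fibration, by Thom's isotopy lemma applied to the product stratification. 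It remains to show that shrinking $x$'s neighbourhood inside this polydisc system gives cofinal isomorphisms. This uses that the cohomology is unchanged as $B_a$ shrink, which is independent for each factor. That yields the stalk formula and hence the isomorphism.
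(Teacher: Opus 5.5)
Your proposal is correct and follows the paper's proof: you build the same chain of canonical morphisms \eqref{1.4}, using $\tilde\delta'$ and the restriction $\id\to\tilde\delta'_*\tilde\delta'^*$, and then restrict to $X_1{\times}X_2$; you then check the isomorphism stalkwise with Milnor fibrations for each $f_a$ separately (the paper's Lemma~\ref{L1.2}, via a Whitney stratification with the Thom $A_f$-condition and the isotopy theorem), using products of Milnor balls as the cofinal system of neighbourhoods. Your treatment of the fibre product of the two Milnor fibrations over $\Delta^*_\eta$ and of the K\"unneth formula simply spells out details the paper leaves implicit.
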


\begin{proof}
Let $\rho:\St^*\to S^*$ be a universal covering.
Set
\begin{equation*}
\Yt=Y{\times}_S\St^*,\q\Yt_a=Y_a{\times}_S\St^*.
\end{equation*}
There is a commutative diagram
\begin{equation} \label{1.3}
\begin{gathered}
\xymatrix{\Yt \ar[r]^-{\tilde\delta'}\ar[d]_{\tilde j}& \Yt_1{\times}\Yt_2 \ar[d]^{\tilde j_1{\times}\tilde j_2}\\ 
Y \ar[r]^-{\delta'}\ar[d]_{f}& Y_1{\times}Y_2 \ar[d]^{f_1{\times}f_2}\\ 
S \ar[r]^-{\delta}& S{\times}S}
\end{gathered}
\end{equation}
where $\delta:S\to S{\times}S$ is the diagonal embedding, and $\tilde\delta'$ is a canonical morphism.
Put
\begin{equation*}
\Kt_a=\tilde j^*_aK_a,\q\Kt=\tilde j^*\delta'^*K\q\hbox{with}\q K=K_1\boxtimes K_2.
\end{equation*}
There are canonical morphisms
\begin{equation} \label{1.4}
\tilde j_{1*}\Kt_1\boxtimes\tilde j_{2*}\Kt_2\to (\tilde j_{1}{\times}\tilde j_{2})_*(\Kt_1\boxtimes\Kt_2)\to\delta'_*\tilde j_*\Kt,
\end{equation}
where the last morphism is induced by \eqref{1.3} and the restriction morphism $\id\to\tilde\delta'_*\tilde\delta'{}^*$.
The morphism \eqref{1.2} is obtained by applying $i^*_1{\times}i^*_2$ to \eqref{1.4}.
Here $i_a:X_a\into Y_a$ denotes a natural inclusion so that we have by definition
\begin{equation*}
\psi_{f_a}=i^*_a\tilde j_{a*}\tilde j^*_a,
\end{equation*}
(similarly for $\psi_f$), see \cite{D3}.
Then we get the isomorphism at each point of $X=X_1{\times}X_2$, by using the following lemma applied to $f_a$.
\end{proof}

\begin{lem} \label{L1.2}
Let $f:Y\to S$ be a holomorphic function on a complex analytic space $X$.
For $x\in X:=f^{-1}(0)$, let $B_\eps (x)$ be the $\eps$-ball with center $x$ which is defined by choosing a local embedding into a complex manifold with coordinates.
Then, for $K\in D^b_c(A_Y)$, there is $\eps_0>0$ together with $\eta_{\eps}>0$ for any $\eps\in(0,\eps_0)$ such that $\eta_{\eps}\ll\eps$ and the restriction of
\begin{equation*}
R^i(f|_{B_\eps (x)})_*(K|_{B_\eps (x)})\q\hbox{to}\q\,\Delta^*_{\eta_{\eps}}:=\bl\{s\in S\,\big|\, 0<|s|<\eta_{\eps}\br\}
\end{equation*}
is a local system whose stalks are independent of $\eps$ by the restriction morphisms, and are isomorphic to $\Hc^i(\psi_fK)_x$ {\rm (}this isomorphism is unique for $s\in\R_+\cap\Delta^*_{\eta_{\eps}}$ if a continuous lift of $\R_+\subset S^*$ to $\St^*$ is chosen{\rm )}.
\end{lem}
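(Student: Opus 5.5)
We use the standard conic-structure argument for constructible complexes, as in Deligne's construction of $\psi_f$ in \cite{D3} and the Milnor fibration theorem of L\^e. Fix a local embedding of $(Y,x)$ into $\C^n$ with $x=0$, and let $r(y)=|y|^2$ be the squared distance function. Choose a complex analytic Whitney stratification of $Y$ that satisfies three conditions: $X$ is a union of strata, $K$ has locally constant cohomology sheaves on each stratum, and Thom's $a_f$-condition holds relative to $X$. Such a stratification exists by Hironaka, Thom and Mather. By the curve selection lemma we get $\eps_0>0$ with the following property. For $0<\eps\les\eps_0$:
\begin{itemize}
\item[(i)] the sphere $\{r=\eps^2\}$ is transversal to every stratum of $Y$, near $X$ and inside $B_{\eps_0}(x)$;
\item[(ii)] the restrictions of $f$ to the strata of $Y\setminus X$ intersected with $\{r=\eps^2\}$ have no critical points with values in a small punctured disc $\Delta^*_{\eta_\eps}$.
\end{itemize}
Condition (ii) follows from the $a_f$-condition, which gives, for each $\eps$, an $\eta_\eps\ll\eps$.

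The key step is that $f:\Yo\cap B_\eps(x)\cap f^{-1}(\Delta^*_{\eta_\eps})\to\Delta^*_{\eta_\eps}$ is a stratified topological fibration. Here we use the closed ball, or equivalently a proper map after adding the boundary sphere as a stratified boundary. This is Thom's first isotopy lemma, applied to the stratification by the strata of $Y\setminus X$ inside the open ball together with their intersections with the sphere. Since $K$ is constructible with respect to these strata, we conclude that $R^i(f|_{B_\eps})_*K$ is a local system on $\Delta^*_{\eta_\eps}$. Its stalk at $s$ is $H^i(B_\eps\cap f^{-1}(s),K)$; we use that the open ball is a stratified deformation retract of the closed one, so the open and closed versions agree. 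To see that the stalks are independent of $\eps$, take $\eps'<\eps$ and $|s|<\eta_{\eps'}$. The function $r$ has no stratified critical points on $f^{-1}(s)\cap\{\eps'^2\les r\les\eps^2\}$, by (i) and the choice of $\eta$ (the same curve selection argument). Integrating a controlled vector field then shows that the restriction $H^i(B_\eps\cap f^{-1}(s),K)\to H^i(B_{\eps'}\cap f^{-1}(s),K)$ is an isomorphism.

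To compare with $\psi_f$, note that $\Hc^i(\psi_fK)_x=\varinjlim_{U\ni x} H^i(\Yt\cap\tilde f^{-1}... )$. More precisely, it is $\varinjlim H^i(U\times_S\St^*,\tilde j^*K)$, where the limit runs over neighborhoods $U$ of $x$ in $Y$. It suffices to use the cofinal system $U_{\eps,\eta}=B_\eps(x)\cap f^{-1}(\Delta_\eta)$. Over $\Delta^*_{\eta_\eps}$ we have the fibration from the key step, and the pullback to the universal cover $\rho^{-1}(\Delta^*_{\eta_\eps})$ is a fibration over a contractible base. Hence $H^i(U_{\eps,\eta}\times_S\St^*,\tilde j^*K)$ is isomorphic to the cohomology of the fiber over any point $\tilde s$. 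This isomorphism is canonical once $\tilde s$ is chosen, and it is also compatible with the restrictions as $\eta$ shrinks. Combined with the $\eps$-independence, this identifies the direct limit with the stalk $H^i(B_\eps\cap f^{-1}(s),K)$. If we fix a continuous lift of $\R_+$ to $\St^*$, the point $\tilde s$ is determined by $s\in\R_+$, and the isomorphism becomes unique, as claimed.

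The main obstacle is the stratified fibration and $\eps$-independence step. The delicate point is choosing $\eta_\eps$ so that both transversality conditions hold simultaneously. This rests on the existence of a stratification with the $a_f$-condition and on the curve selection lemma, and we cite these as standard (L\^e, Hamm--L\^e) rather than reprove them.
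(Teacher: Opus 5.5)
Your proposal is correct and is essentially the paper's argument. The paper's proof only cites Hironaka's Whitney stratification with the Thom $A_f$-condition and the Thom isotopy theorem, and you fill in the standard details: transversality to spheres, the stratified fibration over $\Delta^*_{\eta_\eps}$, $\eps$-independence, and the comparison with $\Hc^i(\psi_fK)_x$ via the universal cover.

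Two small slips, both harmless. First, the closed ball does not retract onto the open one; what you want is that the boundary sphere has a stratified collar, so the open and closed Milnor fibers have the same cohomology with coefficients in $K$. Second, in the $\eps$-independence step the bound on $|s|$ depends on both $\eps'$ and $\eps$. This does no damage, since a morphism of local systems on the connected set $\Delta^*_{\min(\eta_\eps,\eta_{\eps'})}$ that is an isomorphism at one point is an isomorphism everywhere.
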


\begin{proof}
This is a direct consequence of the local existence of a Whitney stratification satisfying the Thom $A_f$-condition by Hironaka \cite{Hi} and the Thom isotopy theorem.
This finishes the proof of Lemma~\ref{L1.2} and Proposition~\ref{P1.1}.
\end{proof}

\begin{prop} \label{P1.3}
Let $Y_a$ be a complex analytic space, and $X_a$ its closed subspace for $a=1,2$.
Put $Y=Y_1{\times}Y_2$, $X=X_1{\times}X_2$, so that
\begin{equation*}
C_X Y=C_{X_1}Y_1{\times}C_{X_2}Y_2.
\end{equation*}
For $K_a\in D^b_c(A_{Y_a})\,(a=1,2)$, there is a canonical isomorphism in $D_c(A_{C_X Y})$
\begin{equation} \label{1.5}
\Sp_{X_1}K_1\boxtimes\Sp_{X_2}K_2\simto\Sp_X(K_1\boxtimes K_2).
\end{equation}
\end{prop}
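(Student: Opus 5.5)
We reduce Proposition~\ref{P1.3} to Proposition~\ref{P1.1} by fibering the two deformations to the normal cone over the diagonal of $S{\times}S$.

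\emph{Setting up the fiber product.} Let $p_a:D_{X_a}Y_a\to S$ and $q_a:D_{X_a}Y_a\to Y_a$ be the deformations for $a=1,2$, and put $p:D_XY\to S$, $q:D_XY\to Y$. We first check that there is a canonical isomorphism
\begin{equation*}
D_XY\simto D_{X_1}Y_1{\times}_S D_{X_2}Y_2
\end{equation*}
over $S$ and over $Y=Y_1{\times}Y_2$. Algebraically, the left-hand side is $\Spe_Y$ of $\mopl_j\I_X^j s^{-j}$ with $\I_X=\I_{X_1}\Oc_Y+\I_{X_2}\Oc_Y$. The right-hand side is $\Spe_Y$ of
\begin{equation*}
\bl(\mopl_j\I_{X_1}^j s^{-j}\br)\otimes_{\C[s]}\bl(\mopl_k\I_{X_2}^k s^{-k}\br)
\end{equation*}
(completed analytically). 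There is a natural surjective morphism from the latter algebra to the former, sending $a s^{-j}\otimes b s^{-k}$ to $ab\,s^{-j-k}$. It is surjective because $\I_X^m=\sum_{j+k=m}\I_{X_1}^j\I_{X_2}^k$.

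Injectivity is the delicate point. Both algebras are $s$-torsion free, and after inverting $s$ both become $\Oc_Y[s,s^{-1}]$. The question is whether the tensor product over $\C[s]$ can acquire $s$-torsion. We expect it cannot, since each Rees algebra is flat over $\C[s]$. Alternatively, we may avoid the issue entirely: $\psi$ only depends on the reduced structure and on the part over $S^*$ up to closure, so it is enough to have a finite morphism that is an isomorphism over $S^*$ and a homeomorphism on fibers over $0$. The latter holds because both fibers over $0$ are $C_{X_1}Y_1{\times}C_{X_2}Y_2$, which matches the stated identification $C_XY=C_{X_1}Y_1{\times}C_{X_2}Y_2$.

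\emph{Applying Proposition~\ref{P1.1}.} With this identification, we apply Proposition~\ref{P1.1} with $Y_a:=D_{X_a}Y_a$, $f_a:=p_a$, and complexes $q_a^*K_a$. Then $f_a^{-1}(0)=C_{X_a}Y_a$ and $Y_1{\times}_SY_2=D_XY$. Moreover
\begin{equation*}
\delta'^*(q_1^*K_1\boxtimes q_2^*K_2)=q^*(K_1\boxtimes K_2),
\end{equation*}
since $(q_1{\times}q_2)\ssc\delta'=q$. Thus \eqref{1.2} gives
\begin{equation*}
\psi_{p_1}q_1^*K_1\boxtimes\psi_{p_2}q_2^*K_2\simto\psi_p q^*(K_1\boxtimes K_2),
\end{equation*}
which is exactly \eqref{1.5} by the definition of $\Sp$. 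The complexes $q_a^*K_a$ are constructible, so the hypotheses of Proposition~\ref{P1.1} hold. Canonicity follows from the canonicity of \eqref{1.2} and of the isomorphism of deformation spaces.

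\emph{Main obstacle.} We expect the main difficulty to be the identification of $D_XY$ with the fiber product, in particular controlling possible nonreduced or torsion phenomena over $s=0$. As a fallback, we work with the topological description: since $\Spe$ is applied to reduced spaces and $\psi_p$ only sees the underlying topological spaces, we would compare the two spaces via their natural map. That map is an isomorphism over $S^*$, where both equal $S^*{\times}Y$, and a bijective proper map over $0$, which is enough for the nearby-cycle functors to agree.
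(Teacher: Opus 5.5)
Your proposal is correct and is the paper's argument: identify $D_XY$ with $D_{X_1}Y_1{\times}_SD_{X_2}Y_2$, apply Proposition~\ref{P1.1} to $p_1,p_2$ and $q_1^*K_1,q_2^*K_2$, and conclude using $(q_1{\times}q_2)\ssc\delta'=q$. The paper simply takes the fiber-product identification for granted. Your flatness remark already justifies it: a tensor product over $\C[s]$ of $s$-torsion-free, hence flat, algebras is again flat, so the surjection, being an isomorphism after inverting $s$, is injective. The hedging and the topological fallback are therefore unnecessary.
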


\begin{proof}
With the notation of Section~\ref{S1}, we have
\begin{equation*}
D_X Y=D_{X_1}Y_1{\times}_S D_{X_2}Y_2,
\end{equation*}
i.e.
$p:D_XY\to S$ is the fiber product of $p_1:D_{X_1}Y_1\to S$ and $p_2:D_{X_2}Y_2\to S$.
We apply Proposition~\ref{P1.1} to $p_1$, $p_2$ and $q_1^*K_1$, $q_2^*K_2$, where $q_a:D_{X_a}Y_a\to Y_a$.
Then the assertion follows by using the commutative diagram
\begin{equation*}
\xymatrix{D_XY \ar[r]^-{\delta'}\ar[d]_{q}& D_{X_1}Y_1{\times}D_{X_2}Y_2 \ar[d]^{q_1{\times}q_2}\\
Y \ar@{=}[r] & Y_1{\times}Y_2}
\end{equation*}
This finishes the proof of Proposition~\ref{P1.3}.
\end{proof}

\subsection{Compatibility with a projection} \label{S1.1}

For $a=1,2$, let $\pi_a:Y_a\to X$ be a trivial vector bundle of rank $a$ on a complex analytic space $X$ with trivialization given by $\rho_a:Y_a\to\C^a$ so that we have the isomorphism
\begin{equation*}
\pi_a{\times}\rho_a:Y_a\simto X{\times}\C^a.
\end{equation*}
Let
\begin{equation*}
\pi:Y_2\to Y_1
\end{equation*}
be a morphism of vector bundles induced by a projection $\C^2\to\C$.
Using coordinates $s_1,s_2$ and $s_1$ of $\C^2$ and $\C$ respectively, this can be expressed as
\begin{equation*}
\pi:Y_2=X{\times}\C^2\ni(x,s_1,s_2)\mapsto(x,s_1)\in Y_1=X{\times}\C.
\end{equation*}
Here $\pi^*s_1$ is denoted by $s_1$. Take $K\in D^b_c(A_{Y_2})$, and set
\begin{equation} \label{1.6}
X_0:=X\cap\supp\,K\q\hbox{with}\q\supp\,K:=\mcup_i\,\supp\,\Hc^iK,
\end{equation}
where $X$ is identified with the zero sections of $Y_2$.
(Recall that the support of a sheaf is defined to be the complement of the largest open subset on which the sheaf vanishes, and hence the $\supp\,\Hc^iK$ and $\supp\,K$ are closed analytic subsets.)
\sk
Assume the following two conditions:
\begin{equation} \label{1.7}
\h{The induced morphism $\supp\,K\to X$ is injective.}
\end{equation}
\begin{equation} \label{1.8}
\begin{aligned}
&\h{For any $x\in X_0$ there is a fundamental neighborhood system $\{B_{x,j}\}_{j\in\N}$}\\ &\h{of $x$ in $X$ and $\gamma,\eta_j>0$ such that the restriction of the higher}\\&\h{direct image $R^k(\rho_2|_{B_{x,j}{\times}\C^2})_*(K|_{B_{x,j}{\times}\C^2})$ to the open subset}\\&\q\q\q\q\q\bl\{(s_1,s_2)\in\C^2\,\big|\,|s_2|<\eta_j,\, |s_1|<\gamma|s_2|\br\}\subset\C^2\\&\h{is a local system whose stalk is independent of $j$ by the restriction}\\&\h{morphisms, where $(s_1,s_2)$ are the coordinates of $\C^2$ as above.}
\end{aligned}
\end{equation}
Condition \eqref{1.7} is trivially satisfied if the support of $K$ is contained in a section of the projection $Y_2\to X$. Condition \eqref{1.8} is satisfied in the Thom-Sebastiani situation, where $B_{x,j}$ is the product of Milnor balls for $f_1,f_2$ in Lemma~\ref{L1.2} with $\gamma\eq1$. (Indeed, we have local systems on the complement of $\{t_1t_2\eq0\}$ locally, and $s_1\eq t_1{+}t_2$, $s_2\eq t_1{-}t_2$.) Set
\begin{equation} \label{1.9}
X_0^{(a)}:=X_0{\times}\C^a\q(a\eq1,2).
\end{equation}
This is identified with the restriction of $C_XY_a=X{\times}\C^a$ over $X_0\,(a=1,2)$.
We denote also by $\pi $ the induced morphisms $C_XY_2\to C_XY_1$ and $Y_2=X{\times}\C^2\to Y_1=X{\times}\C$.
These two morphisms are canonically identified to each other by using the vector space structures on $Y_1,Y_2$.

\begin{prop} \label{P1.4}
With the above assumptions, we have the canonical isomorphism
\begin{equation} \label{1.10}
(\Sp_X\pi_*K)|_{X_0^{(1)}}\to(\pi_*\Sp_XK)|_{X_0^{(1)}}.
\end{equation}
\end{prop}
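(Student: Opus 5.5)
The plan is to construct the morphism \eqref{1.10} as a base change morphism between the deformations to the normal cone, and then to check that it is an isomorphism stalkwise, using Lemma~\ref{L1.2} and condition \eqref{1.8}.

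Here $X$ is the zero section, so $\I_X$ is generated by the fiber coordinates. Hence $D_XY_a\cong X{\times}\C^a{\times}S$, with $q(x,u,s)=(x,su)$ for $u\in\C^a$, and $p$ is the projection to $S$. The morphism $\pi$ induces $\tilde\pi=\pi{\times}\id_S:D_XY_2\to D_XY_1$, which is compatible with $p$ and $q$:
\[
q_1\ssc\tilde\pi=\pi\ssc q_2,\qquad p_1\ssc\tilde\pi=p_2.
\]
Base change gives $q_1^*\pi_*K\to\tilde\pi_*q_2^*K$. Composing with the canonical morphism $\psi_{p_1}\tilde\pi_*\to\pi_*\psi_{p_2}$, which comes from $i^*\tilde j_*\tilde j^*$ and the adjunctions for $\tilde\pi_*$, we get
\[
\Sp_X\pi_*K=\psi_{p_1}q_1^*\pi_*K\to\psi_{p_1}\tilde\pi_*q_2^*K\to\pi_*\psi_{p_2}q_2^*K=\pi_*\Sp_XK .
\]
Since $\pi$ is not proper, $\pi_*$ should be understood as $R\pi_*$. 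Both base-change steps can fail to be isomorphisms, which is why the statement is only made over $X_0^{(1)}$.

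To show that the composite is an isomorphism, restrict to a point $(x,u_1)$ with $x\in X_0$. By \eqref{1.7}, near $x$ the support of $K$ meets $\pi_a^{-1}(x)$ in at most one point of the fiber direction, which I would use to localize the computation. The stalk of the left side, by Lemma~\ref{L1.2}, is computed as
\[
R\Gamma\bl(B_{x,j}{\times}\{(s_1,s_2): |s_1-su_1|<\eps'\,|s|,\ s_2\in\C\},\,K\br)
\]
for small $s\ne0$. The stalk of the right side is $R\Gamma$ over $u_2\in\C$ of the nearby cycle stalks, that is, cohomology over the region $|s_1-su_1|$ small relative to $|s|$, with $|s_2|$ bounded by a multiple of $|s|$ (for large bounds).

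The key point is to show that enlarging $|s_2|$ from $O(|s|)$ to $\eta_j$ does not change the cohomology. Pushing forward by $\rho_2$, this becomes a statement about $R\rho_{2*}K$ on sectors of the form $\{|s_1|\lesssim\gamma|s_2|\}$ for $|s_2|\gg|s|$. This is exactly where \eqref{1.8} gives a local system on $\{|s_2|<\eta_j,\ |s_1|<\gamma|s_2|\}$. On that region the cohomology is therefore constant in the radial direction, and a Mayer–Vietoris or exhaustion argument identifies the two stalks compatibly with the map constructed above. Independence of $j$ gives compatibility with shrinking the neighborhoods $B_{x,j}$.

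I expect the main obstacle to be this comparison at infinity in the $u_2$-direction. One must justify that $R\Gamma$ over $u_2\in\C$ of $\psi_{p_2}$ equals the nearby cycles of the pushforward, which is a properness-type statement failing in general. The plan is to cover $u_2$-space by a large disk together with the sector $|u_1|<\gamma|u_2|$, where \eqref{1.8} (a local system, hence locally constant cohomology) applies. Then show the contribution from the annulus matches on both sides, using the conic (homogeneous) structure of specialization under the $\C^*$-action, so that the region $|u_2|>R$ behaves like a conic local system.
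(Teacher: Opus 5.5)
Your outline is sound. It uses the same decisive input as the paper: \eqref{1.8} on the sector $|s_1|<\gamma|s_2|$, to control the fibre of $\pi$ near $u_2=\infty$. Your construction of the morphism (base change, then $\psi_{p_1}\tilde\pi_*\to\pi_*\psi_{p_2}$) is also the paper's. The difference lies in how the non-properness of $\pi$ is handled.

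The paper blows up $Y_2{\times}S$ twice, so that $\pi$ extends to $\pi':(Y_2{\times}S)''\to(Y_1{\times}S)'$, proper on the closure of $\supp K{\times}S^*$. Proper base change then commutes $\psi$ with $\pi'_*$. Everything reduces to the local statement \eqref{1.11} at the added points $z\in E_1\cap E_2$ over $X_0^{(1)}$. There \eqref{1.14} and \eqref{1.12} identify both sides with the cohomology of the local system of \eqref{1.8} on a punctured disc.

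You stay on $D_XY_2$ and cut the $u_2$-line into a large closed disc, where compact base change applies, and an exterior annulus. You then show the exterior contributes nothing on either side by homotopy invariance. That exterior region is a punctured neighbourhood of the paper's point $z$, so your Mayer--Vietoris comparison is \eqref{1.11} in disguise.

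Your route avoids blow-ups, but you must supply by hand what the compactification gives automatically:
\begin{itemize}
\item The stalk of $\pi_*\Sp_XK$ at $(x,u_1)$ is a limit over $U{\times}\C$ for neighbourhoods $U$ of $(x,u_1)$, not the cohomology of one fibre. So the exterior has to be treated on $B_{x,j}{\times}\{|u'_1-u_1|<\eps\}{\times}\{|u_2|>R\}$, using \eqref{1.8} in its pushed-forward form over $B_{x,j}$, not only the local system on $\{x\}{\times}$sector.
\item You must check that your identifications are induced by the constructed morphism, i.e.\ by restriction from the tubes $\tilde\pi^{-1}(V)$ to $\pi^{-1}(U)$.
\end{itemize}
The paper's set-up also provides the blow-up geometry that is reused in Proposition~\ref{P3.4} for mixed Hodge modules.

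One small correction: $q_1^*\pi_*K\to\tilde\pi_*q_2^*K$ is an isomorphism over $S^*$, hence also after applying $\psi_{p_1}$. The failure off $X_0^{(1)}$ therefore comes only from the second step. For instance, $\pi_*\Sp_XK$ vanishes over $X\setminus X_0$, whereas $\Sp_X\pi_*K$ need not.
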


\begin{proof}
Consider the morphism induced by $\pi{\times}\id$:
\begin{equation*}
p^{-1}_2(S^*)=Y_2{\times}S^*\to p^{-1}_1(S^*)=Y_1{\times}S^*.
\end{equation*}
This is naturally extended to $\pi:D_XY_2\to D_XY_1$ using the relative coordinate systems $(\tfrac{s_2}{s},\tfrac{s_1}{s},s)$ and $(\tfrac{s_1}{s},s)$ respectively of $D_XY_2$ and $D_XY_1$ over $X$.
Its base change by $\St^*\to S$ is denoted by $\tilde\pi:\Dt_X Y_2\to\Dt_XY_1$.
We have a commutative diagram
\begin{equation*}
\xymatrix{C_XY_2 \ar[r]^{i_2}\ar[d]^{\pi}& D_XY_2 \ar[d]^{\pi}&\Dt_XY_2 \ar[l]_{\tilde j_2}\ar[d]^{\tilde\pi}\\ 
C_XY_1 \ar[r]^{i_1}& D_XY_1 & \Dt_XY_1 \ar[l]_{\tilde j_1}}
\end{equation*}
Since $\tilde{j}_1^*\ssc\pi_*=\tilde{\pi}_*\ssc\tilde{j}_2^*$, the morphism \eqref{1.10} is then induced by the canonical morphism
\begin{equation*}
i^*_1\ssc\pi_*\to\pi_*\ssc i^*_2.
\end{equation*}
\sk
Let $(Y_a{\times}S)'$ be the blow-up of $Y_a{\times}S$ along $X{\times}\{0\}$ for $a=1,2$.
Let $(Y_2{\times}S)''$ be the blow-up of $(Y_2{\times}S)'$ along the proper transform of $\pi^{-1}(X{\times}\{0\})$, which is isomorphic to $X{\times}\C$.
The proper transform $E_1$ of the exceptional divisor of the first blow-up is isomorphic to $X{\times}\Pt^2$ where $\Pt^2$ is the blow-up of $\PP^2$ along the point corresponding to the intersection with the proper transform of $\pi^{-1}(X{\times}\{0\})$.
The exceptional divisor $E_2$ of the second blow-up is isomorphic to $X{\times}\C{\times}\PP^1$. The pictures of local affine coordinates of $(Y_2{\times}S)'$, $(Y_2{\times}S)''$ are as below.

\hbox{$\setlength{\unitlength}{.9cm}
\begin{picture}(1,7)
\end{picture}$
$\setlength{\unitlength}{.9cm}
\begin{picture}(8,7)
\put(2,2){\vector(-3,-2){1.3}}
\put(6,2){\vector(3,-2){1.3}}
\put(4,5){\vector(0,1){1.5}}
\qbezier(2,2)(4,.5)(6,2)
\qbezier(6,2)(5.7,4)(4,5)
\qbezier(4,5)(2.3,4)(2,2)
\put(1.3,1.75){$\scriptstyle s$}
\put(1.6,2.5){$\frac{s_2}{s}$}
\put(2.2,1.2){$\frac{s_1}{s}$}
\put(6.55,1.75){$\scriptstyle s_1$}
\put(6.05,2.5){$\frac{s_2}{s_1}$}
\put(5.4,1.2){$\frac{s}{s_1}$}
\put(3.6,5.7){$\scriptstyle s_2$}
\put(4.4,5){$\frac{s_1}{s_2}$}
\put(3.2,5){$\frac{s}{s_2}$}
\end{picture}$
$\setlength{\unitlength}{.9cm}
\begin{picture}(8,7)
\put(2,2){\vector(-3,-2){1.3}}
\put(6,2){\vector(3,-2){1.3}}
\put(3,5){\vector(0,1){1.5}}
\put(5,5){\vector(0,1){1.5}}
\qbezier(2,2)(4,.5)(6,2)
\qbezier(6,2)(6.7,4)(5,5)
\qbezier(2,2)(1.3,4)(3,5)
\linethickness{.5mm}
\qbezier(3,5)(4,4)(5,5)
\put(1.3,1.75){$\scriptstyle s$}
\put(1.35,2.5){$\frac{s_2}{s}$}
\put(2.2,1.2){$\frac{s_1}{s}$}
\put(6.55,1.75){$\scriptstyle s_1$}
\put(6.25,2.5){$\frac{s_2}{s_1}$}
\put(5.4,1.2){$\frac{s}{s_1}$}
\put(2.6,5.7){$\scriptstyle s_2$}
\put(4.6,5.7){$\scriptstyle s_2$}
\put(5.4,5){$\frac{s_1}{s_2}$}
\put(2.2,5){$\frac{s}{s_2}$}
\put(3.05,4.3){$\frac{s_1}{s}$}
\put(4.6,4.3){$\frac{s}{s_1}$}
\put(3.75,3){$E_1$}
\put(3.7,5.7){$E_2$}
\put(3.6,4.8){$\scriptstyle\eqref{1.13}$}
\end{picture}$}

We see that the morphism $\pi$ naturally induces
\begin{equation*}
\pi':(Y_2{\times}S)''\to (Y_1{\times}S)'.
\end{equation*}
Its restrictions to $E_1=X{\times}\Pt^2$ and $E_2=X{\times}\C{\times}\PP^1$ are respectively induced by the projection $\Pt^2\to\PP^1$ associated to the blow-up of $\PP^2$ eliminating the point of indeterminacy of the rational map to $\PP^1$, and by the projection $\C\to pt$.
The closure of $\supp\,K{\times}S^*$ in $(Y_2{\times}S)''$ is proper over $(Y_1{\times}S)'$, since it is proper over $Y_1{\times}S$. Let
\begin{equation*}
p''_2:(Y_2{\times}S)''\to S,\q q''_2:(Y_2{\times}S)''\to Y_2,\q j:X_0^{(2)}\to p_2''{}^{-1}(0)
\end{equation*}
denote the canonical morphisms (see \eqref{1.9} for $X_0^{(2)}$).
Let $\overline{X}_0^{(2)}$ denote the closure of $X_0^{(2)}$ in $(Y_2{\times}S)''$. It is enough to show that we have the canonical isomorphism
\begin{equation} \label{1.11}
\psi_{p''_2}q''^*_2 K|_{\overline{X}_0^{(2)}\cap\pi'{}^{-1}(X_0^{(1)})}\simto j_*j^*\psi_{p''_2}q''^*_2 K|_{\overline{X}_0^{(2)}\cap\pi'{}^{-1}(X_0^{(1)})}.
\end{equation}
Indeed, the assertion follows from this by applying $\pi'_*$ to it and using the proper base change theorem.
Set
\begin{equation*}
\rd X_0^{(2)}:=\overline{X}_0^{(2)}\setminus X_0^{(2)}.
\end{equation*}
Then
\begin{equation*}
\overline{X}_0^{(2)}=X_0{\times}\Pt^2,\q\rd X_0^{(2)}=X_0{\times}(\Pt^2\setminus\C^2)\q\hbox{with}\q\Pt^2\setminus\C^2=\PP^1\cup\PP^1,
\end{equation*}
and the restriction of $\pi$ to $\overline{X}_0^{(2)}$ is induced by the projection $\Pt^2\to\PP^1$.
Note that the support of $\Sp_XK=\psi_{p''_2}q''^*_2 K|_{C_XY_2}$ is contained in $X_0^{(2)}=X_0{\times}\C^2$.
By the assumption \eqref{1.8} the restriction of $\Sp_XK$ to
\begin{equation*}
E_x:=\{x\}{\times}\{|s_1|<\gamma|s_2|\}\subset X{\times}\C^2=C_XY_2
\end{equation*}
is a local system.
\sk
For $z\in\rd X_0^{(2)}\cap\pi'{}^{-1}(X_0^{(1)})=X_0{\times}\C$, put $x=q''_2(z)$ and let $C_z$ be a circle in
\begin{equation*}
E_x\cap\pi'{}^{-1}\pi'(z)
\end{equation*}
around $z$.
Here note that $\pi'$ induces a bijection
\begin{equation*}
\rd X_0^{(2)}\cap\pi'{}^{-1}(X_0^{(1)})\simto X_0^{(1)}.
\end{equation*}
Then there is a canonical isomorphism
\begin{equation} \label{1.12}
(j_*j^*\psi_{p''_2}q''^*_2K)_z={\bf R}\Gamma (C_z,\Sp_XK).
\end{equation}
In particular the restriction of $j_*j^*\psi_{p''_2}q''^*_2K$ to
\begin{equation} \label{1.13}
\rd X_0^{(2)}\cap\pi'{}^{-1}(\{x\}{\times}\C)\simeq\C
\end{equation}
has constant cohomology sheaves.
\sk
On the other hand, for $z,x$ as above, set
\begin{equation*}
D^*_\eta=\{0<|s_2|<\eta,\,s_1=0\}\subset\C^2.
\end{equation*}
Then we have a canonical isomorphism for $j\gg 0$ and $0<\eta_j\ll 1$
\begin{equation} \label{1.14}
(\psi_{p''_2}q''^*_2K)_z={\bf R}\Gamma (B_{x,j}{\times}D^*_{\eta_j},K).
\end{equation}
Indeed, from relative affine coordinates $s_2,\,\tfrac{s_1}{s_2},\,\tfrac{s}{s_2}$ of $(Y_2{\times}S)'$, we get relative affine coordinates
\begin{equation*}
s_2,\,\,s''\,{:=}\,\tfrac{s}{s_2},\,\,s''_1\,{:=}\,\tfrac{s_1}{s}
\end{equation*}
of $(Y_2{\times}S)''$ around $\rd X_0^{(2)}\cap\pi'^{-1}(X_0^{(1)})$ over $X$.
So the left-hand side of \eqref{1.14} at $z$ is the limit of
\begin{equation*}
{\bf R}\Gamma\bl(B_{x,j}{\times}\bl\{(s_1,s_2)\,\big|\,\eta_j^{-1}c_j<|s_2|<\eta_j,\,|s_1{-}c''_1c_j|<\eta_jc_j\br\}, K\br)
\end{equation*}
for $j\to\infty$ with $0 < c_j\,(=s)<\eta_j\to 0$, where $z=x{\times}(0,0,c''_1)$ via the above relative affine coordinates $s_2,s'',s''_1$.
In particular, $\psi_{p''_2}q''^*_2K$ has also constant cohomology sheaves on \eqref{1.13}.
Thus the assertion is reduced to showing the isomorphism \eqref{1.11} at one point.
\sk
For $z'=x{\times}(0,c'',0)$ via the affine coordinates $s_2,s'',s''_1$ with $c''\ne0$ and for $0<c_j<\eta_j$ as above, we have the isomorphism
\begin{equation*}
(\Sp_XK)_{z'}={\bf R}\Gamma\bl(B_{x,j}{\times}\bl\{\bl(0,\tfrac{c_j}{c''}\br)\br\},K\br),
\end{equation*}
using the assumption \eqref{1.8}. So the assertion follows from \eqref{1.12} and \eqref{1.14}.
This finishes the proof of Proposition~\ref{P1.4}.
\end{proof}

\begin{thm} \label{T1.5}
Let $Y_a=X_a{\times}S$ be trivial line bundles on complex analytic spaces $X_a$ for $a\eq1,2$. Put $X\defs X_1{\times}X_2$, $\Yt\defs Y_1{\times}Y_2\eq X{\times}S{\times}S$, and $Y\defs X{\times}S$.
Let $\pi:\Yt\to Y$ be a morphism of vector bundles induced by $S{\times}S\ni (t_1,t_2)\mapsto t_1{+}t_2\in S$.
Let $K_a\in D^b_c(A_{Y_a})$. Put $Y'_a\defs\supp\,K_a$, $X'_a\defs X_a\cap Y'_a$, where $X_a$ is identified with the zero section of $Y_a$. Assume the composition $Y'_a\to Y_a\to X_a$ is injective for $a\eq1,2$.
Then we have a canonical isomorphism
\begin{equation} \label{1.15}
\Sp_X\pi_*(K_1\boxtimes K_2)|_C\simto\pi_*(\Sp_{X_1}K_1\boxtimes\Sp_{X_2}K_2)|_C\end{equation}
where $C=X'_1{\times}X'_2{\times}S$ is the restriction of $Y=C_XY$ over $X'_1{\times}X'_2$.
\end{thm}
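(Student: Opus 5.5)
Theorem~\ref{T1.5} will be obtained by combining Proposition~\ref{P1.3} with Proposition~\ref{P1.4}, after a linear change of coordinates on $S{\times}S$. Put $K\defs K_1\boxtimes K_2$ on $\Yt\eq X{\times}S{\times}S$ with coordinates $(t_1,t_2)$. Set
\begin{equation*}
s_1\defs t_1{+}t_2,\q s_2\defs t_1{-}t_2.
\end{equation*}
This is a linear automorphism of the trivial rank-two bundle $\Yt\to X$. In the coordinates $(s_1,s_2)$ the morphism $\pi$ becomes the projection $(x,s_1,s_2)\mapsto(x,s_1)$ of \S\ref{S1.1}, with $Y_2\eq\Yt$ and $Y_1\eq Y$. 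Verdier specialization along the zero section commutes with such bundle automorphisms, compatibly with the vector space structures on the normal cones $C_X\Yt\eq\Yt$ and $C_XY\eq Y$. Hence we are in the setting of Proposition~\ref{P1.4}.

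The argument then runs as follows.
\begin{itemize}
\item[(i)] By Proposition~\ref{P1.3} we have $\Sp_X K\simeq\Sp_{X_1}K_1\boxtimes\Sp_{X_2}K_2$ on $C_X\Yt\eq C_{X_1}Y_1{\times}C_{X_2}Y_2$.
\item[(ii)] The right-hand side of \eqref{1.15} is therefore $\pi_*\Sp_XK|_C$.
\item[(iii)] The isomorphism \eqref{1.15} becomes exactly \eqref{1.10}, once we check that $X_0\eq X\cap\supp K$ equals $X'_1{\times}X'_2$, so that $X_0^{(1)}\eq C$. This holds because $\supp(K_1\boxtimes K_2)\eq Y'_1{\times}Y'_2$ by the K\"unneth formula for stalks over a field, or more generally after noting that the supports behave as products for constructible complexes.
\end{itemize}

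It remains to verify conditions \eqref{1.7} and \eqref{1.8} for $K$. Condition \eqref{1.7} is immediate: $\supp K\eq Y'_1{\times}Y'_2\to X_1{\times}X_2$ is the product of the injective maps $Y'_a\to X_a$.

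Condition \eqref{1.8} is the main point. Fix $x\eq(x_1,x_2)\in X'_1{\times}X'_2$. Apply Lemma~\ref{L1.2} to the functions $t_a:Y_a\to S$ and the complexes $K_a$ at $x_a$. This gives Milnor balls $B_{x_a,\eps}$ and radii $\eta_\eps$ such that $R^k(t_a)_*(K_a|_{B_{x_a,\eps}\times S})$ is a local system on $\Delta^*_{\eta_\eps}$ with stalks independent of $\eps$. Take $B_{x,j}\defs B_{x_1,\eps_j}{\times}B_{x_2,\eps_j}$ with $\eps_j\to0$. By the K\"unneth formula, $R(\rho_2)_*K$ restricted to $B_{x,j}{\times}(\Delta^*_{\eta}{\times}\Delta^*_{\eta})$ is the external product of the two local-system complexes in $(t_1,t_2)$. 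It is therefore a local system off $\{t_1t_2\eq0\}$, with stalks independent of $j$.

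Now take $\gamma\eq1$. The region $\{|s_2|<\eta,\ |s_1|<|s_2|\}$ avoids $\{t_1t_2\eq0\}$, since $t_1\eq0$ forces $s_1\eq-s_2$ and $t_2\eq0$ forces $s_1\eq s_2$, and in either case $|s_1|\eq|s_2|$. The region also lies in the bidisc of radius $\eta$. So \eqref{1.8} holds, as already indicated after \eqref{1.8}. A K\"unneth statement over a general noetherian $A$ needs some care with stalks of direct images of the external product, but properness is not needed. The product of the local trivializations furnished by Thom isotopy in the proof of Lemma~\ref{L1.2} gives a product trivialization, and this handles the point.

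The main obstacle I expect is bookkeeping rather than new geometry. One must make sure the identification in (i) is compatible with the coordinate change $(t_1,t_2)\mapsto(s_1,s_2)$ and with the restriction to $C$. In particular, the support of $\Sp_X K$ in the fibre over $x$ is all of $\C^2$, while only $\{|s_1|<|s_2|\}$ enjoys the local-system property. This is exactly what Proposition~\ref{P1.4} requires, so the theorem follows by applying \eqref{1.10} and composing with $\pi_*$ of the isomorphism of Proposition~\ref{P1.3}.
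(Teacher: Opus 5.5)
Your proposal is correct and follows the paper's own route: the paper obtains Theorem~\ref{T1.5} as a corollary of Lemma~\ref{L1.2} (applied to the projections $Y_a\to S$), Proposition~\ref{P1.3} and Proposition~\ref{P1.4}, and checks \eqref{1.7} and \eqref{1.8} exactly as in the remark following \eqref{1.8} (products of Milnor balls, $\gamma=1$, $s_1=t_1+t_2$, $s_2=t_1-t_2$). One small correction: over a general noetherian $A$ the support of $K_1\boxtimes K_2$ can be strictly smaller than $Y'_1\times Y'_2$ (e.g.\ $\Z/2\otimes^L_{\Z}\Z/3=0$), so instead of asserting $X_0=X'_1\times X'_2$ you should run the proof of Proposition~\ref{P1.4} with $X_0$ replaced by $X'_1\times X'_2$, which works because you verify \eqref{1.8} at every point of $X'_1\times X'_2$.
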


\begin{proof}
This is a corollary of Lemma~\ref{L1.2}, Proposition~\ref{P1.3} and Proposition~\ref{P1.4}, where we apply Lemma~\ref{L1.2} to the projection $Y_a\to S$ for $a\eq1,2$.
\end{proof}

\section{Thom-Sebastiani theorem for A-complexes} \label{S2}
Let $\pi:Y\to X$ be a line bundle over a complex analytic space $X$, and $Y^*=Y\setminus X,\Yo=Y^*/\C^*$ with a projection $\bar\pi:Y^*\to\Yo $.
We say that $K\in D_c(A_Y)$ or $D_c(A_{Y^*})$ is {\it monodromical}, if the restrictions of $\Hc^iK$ to the fibers of $\bar\pi$ are locally constant.
Let $D_c(A_Y)_{\mon}$ and $D_c(A_{Y^*})_{\mon}$ be the full subcategory of $D_c(A_Y)$ and $D_c(A_{Y^*})$ respectively consisting of monodromical complexes.
\sk
Let $D_c(A_Y)_{\mon,!}$ (resp. $D_c(A_Y)_{\mon,*}$) be the full subcategory of $D_c(A_Y)_{\mon}$ defined by the condition : $i^*K=0$ (resp. $i^!K=0$), where $i:X\into Y$ is the zero section.
This condition is equivalent to that a natural morphism
\begin{equation*}
j_!j^*K\to K\q\h{(resp.}\,\,\,K\to j_*j^*K)
\end{equation*}
is an isomorphism, where $j:Y^*\to Y$.
So we get equivalences of categories
\begin{equation*}
D_c(A_Y)_{\mon,!}\simot D_c(A_{Y^*})_{\mon}\simto D_c(A_Y)_{\mon,*},
\end{equation*}
induced by $j_!$ and $j_*$.
The categories $D_c(A_Y)_{\mon}$, $D_c(A_{Y^*})_{\mon}$ are stable by the dual functor $\DD$, and the latter induces an equivalence of categories
\begin{equation*}
\DD:D_c(A_Y)_{\mon,*}\simto(D_c(A_Y)_{\mon,!})^{\rm op},
\end{equation*}
where $\Cc^{\rm op}$ denotes in general the opposite category of a category $\Cc$.
For $K\in D_c(A_Y)_{\mon}$ we have a natural isomorphism
\begin{equation*}
\pi_*K\simto i^*K.
\end{equation*}
We define the functors
\begin{equation*}
\mu:D_c(A_Y)_{\mon}\to D_c(A_Y)_{\mon,!},\q\nu:D_c(A_Y)_{\mon}\to D_c(A_Y)_{\mon,*},
\end{equation*}
by
\begin{equation} \label{2.1}
\mu(K)=C(\pi^*\pi_*K\to K),\q\nu(K)=C(K\to\pi^!\pi_!K)[-1],
\end{equation}
where $K$ is represented by an injective complex and the morphism in the mapping cone is induced by adjunction.
\sk
Let $M(A_Y;\mon)$ and $M(A_{Y^*};\mon)$ be the abelian category of $A_Y$-modules and $A_{Y^*}$-modules respectively whose restriction to the fibers of $\bar\pi:Y^*\to\Yo$ is locally constant.
Let $M(A_Y;\mon,!)$ be the full subcategory of $M(A_Y;\mon)$ defined by the condition $i^*K=0$ or equivalently $j_!j^*K=K$.
Let $D_c(A_{Y^*};\mon)$, $D_c(A_Y;\mon)$, $D_c(A_Y;\mon,!)$ be the full subcategory of their derived category consisting of the objects with constructible cohomology sheaves.
\sk
Let $M(A_X;T)$ be the abelian category of $A_X[T,T^{-1}]$-modules, and $D_c(A_X;T)$ be the full subcategory of the derived category of $M(A_X;T)$, whose objects have $A_X$-constructible cohomology sheaves.
If $\pi:Y\to X$ is a trivial line bundle with trivialization $p:Y\to S$, let $\rho:\Yt^*=Y^*{\times}_{S^*}\St^*\to Y^*$ and $\tilde\pi:\Yt^*\to X$ be natural morphisms, and $i_1:X\to Y$ a section of $\pi$ such that $\Imm(pi_1)=\{1\}\subset S$.
Then there are canonical isomorphisms
\begin{equation} \label{2.2}
\psi_pK\simot\tilde\pi_*\rho^*K\simto i_1^*K\q{\rm for}\q K\in D_c(A_{Y^*})_{\mon}.
\end{equation}
If $K\in M(A_{Y^*};\mon)$, $\tilde\pi_*\rho^*K$ and $i_1^*K$ have an action of the monodromy $T$, and \eqref{2.2} is compatible with the action of $T$.
So we get
\begin{equation*}
(\psi_pK,T)\simeq (i_1^*K,T)\q {\rm in}\q M(A_X;T)
\end{equation*}
for $K\in M(A_{Y^*};\mon)$.
They induce equivalences of categories
\begin{equation} \label{2.3}
M(A_{Y^*};\mon)\simto M(A_X;T),\q D_c(A_{Y^*};\mon)\simto D_c(A_X;T).
\end{equation}
Indeed, we get a functor $M(A_X;T)\to M(A_{Y^*};\mon)$ by taking the quotient of $\tilde\pi^*K'$ under the action of the monodromy for $K'\in M(A_X;T)$, and this gives a quasi-inverse of $\psi_p=i^*_1$.
Since $Y$ is a line bundle over $X$, we have a natural isomorphism $Y=C_XY$, and $Z:=D_XY$ is a trivial line bundle over $Y$ endowed with a projection $\pi':Z\to Y$ and a trivialization $p:Z\to S$.
The projection $\pi'$ is related with $q:Z\to Y$ in Section~\ref{S1} by
\begin{equation} \label{2.4}
q(z)=p(z)\cdot\pi'(z)\q\h{for}\,\,\,z\in Z,
\end{equation}
where the right-hand side is defined by using the natural $\C^*$-action on $Y$.
Applying \eqref{2.2} to $\pi'$ and $q^*K$, we get a canonical isomorphism
\begin{equation} \label{2.5}
\Sp_XK=K\q{\rm for}\q K\in D_c(A_Y)_{\mon}
\end{equation}
So we get an equivalence of categories
\begin{equation*}
D_c(A_Y;\mon)\simto D_c(A_Y)_{\mon}.
\end{equation*}
with quasi-inverse $\Sp$.
As a corollary, we get an equivalence of categories
\begin{equation} \label{2.6}
D_c(A_{Y^*};\mon)=D_c(A_Y;\mon,!)\simto D_c(A_Y)_{\mon,!}= D_c(A_{Y^*})_{\mon},
\end{equation}
since $D_c(A_Y;\mon,!)$ and $D_c(A_Y)_{\mon,!}$ are equivalent respectively to the full subcategories of $D_c(A_Y;\mon)$ and $D_c(A_Y)_{\mon}$ such that the restriction of their objects to $X$ is acyclic.
If $Y$ is a trivial line bundle with trivialization $p:Y\to X$, we have equivalences of categories
\begin{equation} \label{2.7}
D_c(A_{Y^*};\mon)\simto D_c(A_{Y^*})_{\mon}\simto D_c(A_X;T),
\end{equation}
by \eqref{2.3}, \eqref{2.6}, where the last functor is defined by $\psi_p$.

\begin{lem} \label{L2.1}
Let $Y$ be a complex analytic space with $f:Y\to S$ a holomorphic function.
Set $X=f^{-1}(0)$, $Y'=C_XY$. Let $f':Y'\to S$ the induced morphism by $f$.
Then we have $\Sp_XK\in D_c(A_{Y'})_{\mon}$ for $K\in D_c(A_Y)$ and there are natural isomorphisms in $D_c(A_X;T):$
\begin{equation} \label{2.8}
(\psi_fK,T)\simeq (\psi_{f'}\Sp_XK,T),\end{equation}
\begin{equation} \label{2.9}
(\varphi_fK,T)\simeq (\varphi_{f'}\Sp_XK,T)\simeq (\varphi_{f'}\mu(\Sp_XK),T)\buildrel{\sim}\over\longleftarrow (\psi_{f'}\mu(\Sp_X K),T),
\end{equation}
where the last isomorphism is induced by the morphism $\,{\rm can}$.
\end{lem}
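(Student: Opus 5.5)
The plan is to work throughout in the explicit local model of the deformation to the normal cone for a principal ideal. Since $X=f^{-1}(0)$ has ideal generated by $f$, \eqref{1.1} gives
\begin{equation*}
D_XY=\{(y,s,u)\,|\,f(y)=su\}\subset Y{\times}S{\times}\C, \q p=s,\q q(y,s,u)=y,
\end{equation*}
where $u$ plays the role of $f/s$. Consequently $Y'=C_XY=p^{-1}(0)=X{\times}\C$ is a trivial line bundle over $X$, and $f'$ is the fiber coordinate $u$.

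\textbf{Monodromicity.} The $\C^*$-action $\lambda\cdot(y,s,u)=(y,\lambda^{-1}s,\lambda u)$ preserves $D_XY$ and $q$, and rescales $p$. Hence $\psi_pq^*K$ is locally constant along the $\C^*$-orbits in $u\ne0$, so $\Sp_XK\in D_c(A_{Y'})_{\mon}$. To make this rigorous we would compare with the pull-back by the action map and use that $\psi_p$ commutes with rescaling $s$ up to the canonical identification of universal covers.

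\textbf{Isomorphism \eqref{2.8}.} By \eqref{2.2}, $\psi_{f'}\Sp_XK\simeq i_1^*\Sp_XK$, where $i_1$ is the section $u=1$. So we need a canonical morphism $\psi_fK\to i_1^*\psi_pq^*K$ compatible with $T$, and then a stalkwise check. For the morphism, note that on $p^{-1}(S^*)\cap\{u=1\}$ the map $q$ identifies $s$ with $f$. We would therefore restrict $\tilde j_*\tilde j^*q^*K$ from $D_XY$ to the closure of the section $\{u=1\}$, which is isomorphic to $Y$ with $p$ corresponding to $f$. Base change for this restriction gives a map $\psi_fK\to i_1^*\psi_pq^*K$. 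At $x\in X$ and $c$ small, the stalk of the right-hand side is the limit of
\begin{equation*}
{\bf R}\Gamma\bl(B_\eps(x)\cap\{|f-c|<\delta|c|\},K\br).
\end{equation*}
By Lemma~\ref{L1.2}, $R(f|_{B_\eps})_*K$ is a local system on $\Delta^*_{\eta_\eps}$, so this group equals ${\bf R}\Gamma(B_\eps(x)\cap f^{-1}(c),K)=\Hc^\bullet(\psi_fK)_x$. For $T$, the loop $c\mapsto e^{i\theta}c$ is matched with $u\mapsto e^{i\theta}u$ through the $\C^*$-action, so the two monodromies agree. I expect this step to be the main obstacle: the difficulty is not the stalk computation but making the morphism canonical and $T$-equivariant in $D_c(A_X;T)$.

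\textbf{Isomorphisms \eqref{2.9}.} Let $i'$ denote the zero section of $Y'$. The closure of $X{\times}S^*$ in $D_XY$ is $X{\times}S$, and $q^*K$ restricted there is constant in $s$. Hence $i'^*\Sp_XK=K|_X$. The canonical triangles $i^*\to\psi\to\varphi$ for $(f,K)$ and $(f',\Sp_XK)$ are compatible with \eqref{2.8}, which gives the first isomorphism; to get canonicity we would construct the cones functorially at the complex level. For the second, apply $\varphi_{f'}$ to the triangle $\pi^*\pi_*M\to M\to\mu(M)$ with $M=\Sp_XK$. We have $\varphi_{f'}\pi^*N=0$, since $\pi^*N$ is constant along the fiber coordinate $f'$, and this yields $\varphi_{f'}M\simeq\varphi_{f'}\mu(M)$. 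Finally, $\mu(M)\in D_c(A_{Y'})_{\mon,!}$, so $i'^*\mu(M)=0$. The triangle $i'^*\to\psi_{f'}\to\varphi_{f'}$ then shows that ${\rm can}:\psi_{f'}\mu(M)\to\varphi_{f'}\mu(M)$ is an isomorphism, and it commutes with $T$.
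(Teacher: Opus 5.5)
Your argument is correct in outline, but it is organized differently from the paper's. The paper extends $f'$ to the function $g=f/s$ on $D_XY$ (your $u$). It notes that $\psi_gq^*K$ and $\varphi_gq^*K$ live on $g^{-1}(0)=X{\times}S$, are monodromical in the $s$-direction, and restrict at $s=1$ to $\psi_fK$ and $\varphi_fK$. Hence by \eqref{2.2} they equal $\psi_p\psi_gq^*K$ and $\psi_p\varphi_gq^*K$. Everything is then reduced to the commutation of $\psi_p$ with $\psi_g$ and $\varphi_g$ on $q^*K$. That commutation comes from taking the base change by $g{\times}p$ of the product of $\{0\}\to S\gets\St^*$ with itself and applying Lemma~\ref{L1.2}.

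Your route is the mirror image under the symmetry $s\leftrightarrow u$ of $f=su$. You slice by the transversal section $\{u=1\}\simeq Y$, on which $p=f$, and apply \eqref{2.2} in the $u$-direction to the monodromical complex $\Sp_XK$. For \eqref{2.8} this is shorter, since no commutation of two nearby-cycle functors is needed. The $T$-compatibility you worry about is also essentially automatic. The base-change morphism intertwines the monodromies because $p|_{\{u=1\}}=f$; note that it naturally goes $i_1^*\psi_pq^*K\to\psi_fK$, not the other way. What remains is identifying the monodromy of $\psi_p$ with the orbit monodromy via the $\C^*$-action, which you indicate.

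The price is paid for $\varphi$. You compare two triangles, which needs an identification $i'^*\Sp_XK\simeq K|_X$, commutativity of the resulting square, and a functorial choice of cones. The paper instead gets the $\varphi$-isomorphism from the same commutation statement, canonically and $T$-equivariantly. That functorial formulation is also what is reused for mixed Hodge modules in Lemma~\ref{L4.7}. Your deduction of the last two isomorphisms of \eqref{2.9} from $\varphi_{f'}\pi^*=0$ and $i'^*\mu=0$ is exactly what the paper leaves implicit when it says it suffices to prove \eqref{2.8} and the first isomorphism of \eqref{2.9}.

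The one step whose justification does not work as written is $i'^*\Sp_XK\simeq K|_X$. Knowing that $q^*K$ is constant in $s$ on $\{u=0\}=X{\times}S$ only computes $\psi_p$ of the restriction of $q^*K$ to $\{u=0\}$. To conclude you would need $i'^*\psi_p\simeq\psi_p\,i_{\{u=0\}}^*$. Nearby cycles do not in general commute with restriction to a non-transversal subspace such as $\{u=0\}=g^{-1}(0)$. This commutation is precisely the non-formal part of the paper's $\psi_p\varphi_g\simeq\varphi_g\psi_p$.

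The statement is nevertheless true, and it follows from the same kind of stalk computation you did for \eqref{2.8}. The stalk of $\Sp_XK$ at $(x,0)$ is the limit of ${\bf R}\Gamma\bl(B_\eps(x)\cap\{|f|<\delta|c|\},K\br)$. This is $K_x$ once $\delta|c|\ll\eps$, by the conic structure of the Milnor tube given by the Thom isotopy argument behind Lemma~\ref{L1.2}.
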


\begin{proof}
It is enough to show \eqref{2.8} and the first isomorphism of \eqref{2.9}.
The function $f'$ is naturally extended to a function $g$ on $D_XY$ such that its restriction to $Y{\times}S^*$ is $s^{-1}f$, see \eqref{1.1}.
It is enough to show the commutativity of $\psi_{p}$ with $\psi_{g}$, $\varphi_{g}$ on $q^*K$, since $\psi_{g}q^*K$, $\varphi_{g}q^*K$ are monodromical with respect to the projection $X{\times}S\to X$, and their restrictions to $X{\times}\{s=1\}$, which are isomorphic to $\psi_fK$, $\varphi_fK$, are isomorphic to $\psi_{p}\psi_{g}q^*K$, $\psi_{p}\varphi_{g}q^*K$ by Section~\ref{S2}.
We consider the cartesian product of $\{0\}\to S\gets\St^*$ with itself, and take the base change by $g{\times}p$.
Then we get the canonical isomorphisms \eqref{2.8}--\eqref{2.9} by using Lemma~\ref{L1.2}.
The detail is left to the reader.
\end{proof}

\begin{prop} \label{P2.2}
Let $X_a,Y_a$ for $a=1,2$ and $\pi:\Yt\to Y$ be as in Theorem~{\rm \ref{T1.5}}.
Take $K_a\in D^b_c(A_{Y_a})_{\mon,!}$.
Then $\pi_*(K_1\boxtimes K_2)\in D_c(A_Y)_{\mon,!}$, and we have a canonical isomorphism in $D_c(A_X;T):$
\begin{equation} \label{2.10}
(\psim_p\pi_*(K_1\boxtimes K_2),T)\simeq (\psim_{p_1}K_1,T)\boxtimes(\psim_{p_2}K_2,T),
\end{equation}
where $p:Y\to S, p_a:Y_a\to S$ are natural projections, $\psim:=\psi[-1]$, and $T$ on the right-hand side is defined by $T\boxtimes T$.
\end{prop}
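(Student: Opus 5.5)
Using the equivalences \eqref{2.6}--\eqref{2.7}, I will reduce everything to a computation on the fibers over $s=1$ together with the monodromy. First I check that $\pi_*(K_1\boxtimes K_2)$ is monodromical and lies in the $!$-subcategory. Monodromicity follows because $\pi$ is $\C^*$-equivariant for the diagonal action on $\Yt=X{\times}S{\times}S$, and $K_1\boxtimes K_2$ has locally constant cohomology along the $\C^*$-orbits. For the $!$-condition I compute $i^*\pi_*(K_1\boxtimes K_2)$, which is isomorphic to $\pi''_*(K_1\boxtimes K_2)$ by the isomorphism $\pi_*K\simto i^*K$ for monodromical $K$ (with $\pi'':\Yt\to X$), equivalently to $\pi_{1*}K_1\boxtimes\pi_{2*}K_2$ by the Künneth formula. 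Each factor vanishes because $\pi_{a*}K_a\simeq i_a^*K_a=0$. So $\pi_*(K_1\boxtimes K_2)\in D_c(A_Y)_{\mon,!}$.

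Next I compute $\psi_p\pi_*(K_1\boxtimes K_2)\simeq i_1^*\pi_*(K_1\boxtimes K_2)$ via \eqref{2.2}. By proper base change, or rather by the monodromical structure, this is ${\bf R}\Gamma$ of $K_1\boxtimes K_2$ over the line $L=\{t_1+t_2=1\}\cong\C$ in each fiber over $x\in X$. By \eqref{2.7} each $K_a|_{Y_a^*}$ corresponds to $(\psi_{p_a}K_a,T_a)$, and $K_a=j_{a!}j_a^*K_a$. Hence the restriction of $K_1\boxtimes K_2$ to $L$ is the extension by zero from $L\setminus\{t_1t_2=0\}=\C\setminus\{0,1\}$ of the local system with stalk $\psi_{p_1}K_1\boxtimes\psi_{p_2}K_2$. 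The monodromies around $t_1=0$ and $t_2=0$ are $T_1\boxtimes1$ and $1\boxtimes T_2$, respectively. The point is then a relative computation of compactly supported cohomology: ${\bf R}\Gamma(\C,j_!\mathcal L)$, for $\mathcal L$ on $\C\setminus\{0,1\}$ extended by zero at $0,1$. Because $\C$ is not compact, ${\bf R}\Gamma={\bf R}\Gamma_c\otimes$ a correction at infinity. I plan to treat this by the exact triangle relating ${\bf R}\Gamma(\C,j_!\mathcal L)$ to ${\bf R}\Gamma_c(\C,j_!\mathcal L)$ and the nearby data at $\infty$, where the monodromy is $T_1\boxtimes T_2$. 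The expected outcome is the join-type formula ${\bf R}\Gamma(\C,j_!\mathcal L)\simeq(\psi_1\boxtimes\psi_2)[-1]$, with the shift absorbed by $\psim=\psi[-1]$ on each side. That is, $\psim_p=\psi_p[-1]$ on the left versus $[-2]$ on the right.

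An alternative, more intrinsic route is to use Theorem~\ref{T1.5} with $X_a$ as zero sections. By \eqref{2.5}, $\Sp$ acts as the identity on monodromical complexes, so \eqref{1.15} is consistent but not itself informative. I would therefore prefer to construct the isomorphism through Proposition~\ref{P1.1} applied to $p_1,p_2$ on $\Yt$. Concretely, $\psi_{p_1}K_1\boxtimes\psi_{p_2}K_2\simeq\psi_{t}$ of $K_1\boxtimes K_2$ along the diagonal $t_1=t_2$. I would then compare the diagonal direction with the fibers of $t_1+t_2$ via the monodromical $\C^*$-action; this is where the shift $[-1]$ arises. Canonicity and compatibility with $T$ follow because all the isomorphisms are $T$-equivariant, and the total monodromy on $L$ near infinity is the product $T_1T_2=T\boxtimes T$.

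The main obstacle is the middle step. I must identify ${\bf R}\Gamma(L,j_!\mathcal L)$ with $\psi_1\boxtimes\psi_2[-1]$ canonically and $T$-equivariantly, rather than only up to noncanonical isomorphism. For this I would cover $L$ by neighborhoods of $[0,1]$ and use a Mayer–Vietoris argument. The vanishing of the $!$-extensions at the two points kills the local contributions, leaving the cohomology of the segment relative to its endpoints, which is a single copy of the stalk in degree $1$. I would then verify that the monodromy around a large circle acts by $T_1\boxtimes T_2$.
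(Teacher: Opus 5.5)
Your main line works and reaches the paper's key computation by a more direct path.

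\textbf{Comparison with the paper.} You use \eqref{2.2} to replace $\psi_p$ by restriction to $t=1$. You then compute ${\bf R}\Gamma$ of $j_!\mathcal L$ over the line $L_1=\{t_1+t_2=1\}$ by restricting to the segment from $(1,0)$ to $(0,1)$; that last step is exactly Lemma~\ref{L2.3}.

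The paper instead blows up $\Yt$ along the zero section and writes $\psim_p\pi_*K\simeq\pi'_*i_{E*}i_E^*\psim_{p'}K'$. It then studies $i_E^*\psim_{p'}K'$ on the exceptional divisor $E=X{\times}\PP^1$ of directions:
\begin{itemize}
\item it is $*$-extended across $D_0$, the direction $t_1+t_2=0$ (this uses \cite[4.7]{sconj});
\item it is $!$-extended across $D_1,D_2$;
\item its stalk at a generic direction $P$ is $\psim$ along the line $H_P$, namely $\psi_{p_1}K_1\boxtimes\psi_{p_2}K_2[-1]$.
\end{itemize}
Lemma~\ref{L2.3} is then applied on $E'=E\setminus D_0$. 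The map $H_P\mapsto H_P\cap L_1$ identifies $E'$ with your $L_1$, sending $D_1,D_2$ to $(0,1),(1,0)$ and $D_0$ to the point at infinity. So the two computations are the same.

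Your route avoids the blow-up and \cite[4.7]{sconj}: the $*$-behaviour at $D_0$ is automatic once you take ${\bf R}\Gamma$ over the non-compact line. The paper's route gets the $T$-action for free from $\psi_{p'}$. Your picture of the segment in $\{t_1+t_2=1\}$ is in fact the one the paper uses in the proof of Proposition~\ref{P5.1}, when it compares \eqref{2.10} with the $\mathcal D$-module side.

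Two of your side remarks add nothing. The alternative via Proposition~\ref{P1.1} on the diagonal $t_1=t_2$ is only the paper's stalk computation at the single direction $P=(1{:}1)$. The triangle with ${\bf R}\Gamma_c$ and data at infinity is a detour that does not by itself give a canonical identification. The segment restriction in your last paragraph is the right tool.

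\textbf{The gap: compatibility with $T$.} As written, this step does not work, and it is half of the statement. The operator $T$ on $\psi_p\pi_*K\simeq i_1^*\pi_*K$ is the monodromy of the \emph{family} ${\bf R}\Gamma(L_t,K|_{L_t})$, $t\in S^*$. The monodromy of $\mathcal L$ around a large circle in the single fibre $L_1$ is an automorphism of one stalk of $\mathcal L$. It induces no endomorphism of $H^1(L_1,j_!\mathcal L)$. Saying that all the isomorphisms are $T$-equivariant presupposes $T$-actions on the intermediate objects, which you never define.

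\textbf{The repair.} Use the diagonal $\C^*$-action, which you mention only in your alternative. Define $\Phi:X{\times}L_1{\times}S^*\simto\pi^{-1}(X{\times}S^*)$ by $(x,u,t)\mapsto(x,tu)$. It turns $\pi$ into the projection, and $\Phi^*K$ is locally constant along $S^*$ because the $K_a$ are monodromical. This does two things.
\begin{itemize}
\item It justifies the base change $i_1^*\pi_*K\simeq{\bf R}\Gamma(L_1,K|_{L_1})$ relatively over $X$. Proper base change does not apply here, since $\pi$ is not proper.
\item It shows that $T$, transported to the segment, is parallel transport of $\mathcal L$ along $\theta\mapsto e^{i\theta}Q$ for a point $Q=(c,1{-}c)$, $0<c<1$. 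Along this loop both $t_1$ and $t_2$ wind once around $0$, so $T$ acts by $T_1\boxtimes T_2$.
\end{itemize}
The large-circle loop gives the same operator on $\mathcal L_Q$ only because both loops have class $(1,1)$ in the abelian group $\pi_1((\C^*)^2)$.
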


\begin{proof}
Let $\sigma:\Yt'\to\Yt$ be the blow-up along the zero section, and $i_E:E\eq X{\times}{\PP}^1\to\Yt'$ the exceptional set.
Put $K\defs K_1\boxtimes K_2$, $K'\defs\sigma^*K$, $\pi'\defs\pi\ssc\sigma$, $p'\defs p\ssc\pi'$.
Then
\begin{equation*}
(\psim_p\pi_*K,T)\simeq\pi'_*(\psim_{p'}K',T)\simeq\pi'_*i_{E*}i^*_E(\psim_{p'}K',T),
\end{equation*}
using the compactifications of the fibers of $\pi$ and the cohomologically local triviality of $K$ along the fibers of $(Y_1\setminus X_1){\times}(Y_2\setminus X_2)\to X$.
Set
\begin{equation*}
D'_0=(p\ssc\pi)^{-1}(0),\,\,\,D'_1=X_1{\times}Y_2,\,\,\, D'_2=Y_1{\times}X_2.
\end{equation*}
Let $D_i$ be the intersection of $E$ with the proper transform of $D'_i$.
Set
\begin{equation*}
E':=E\setminus D_0,\,\,\,E'':=E'\setminus (D_1\cup D_2),
\end{equation*}
with $j:E'\into E$, $j':E''\into E'$ the inclusion morphisms.
Then
\begin{equation*}
i^*_E(\psim_{p'}K',T)\simto j_*j^*i^*_E(\psim_{p'}K',T).
\end{equation*}
Indeed, the assertion is reduced to the case $X=pt$ by the monodromical condition, and follows from \cite[4.7]{sconj}. We have moreover
\begin{equation*}
i^*_{E'}(\psim_{p'}K',T)\buildrel\sim\over\leftarrow j'_!j'^*i^*_{E'}(\psim_{p'}K',T),
\end{equation*}
by the assumption $K_a\in D^b_c(A_{Y_a})_{\mon,!}$.
Let $P\in {\PP}^1$, and $H$ be the subbundle of $Y'$ corresponding to $X{\times}P\subset E$ (where $\{P\}$ is denoted by $P$ to simplify the notation).
Let
\begin{equation*}
i_P:X{\times}P\into E,\q i_H:H\into Y,\q i'_H:H\into Y'
\end{equation*}
be natural inclusions.
Then for $H\not=D'_i\ (i=0,1,2)$ we have
\begin{equation*}
i^*_Pi^*_E(\psim_{p'}K',T)\simeq(\psim_{p'}i'^*_HK',T)\simeq (\psim_{p\pi}i^*_HK,T),
\end{equation*}
by the transversality of $H$ with $p',K'$.
Since
\begin{equation*}
(\psi_{p\ssc\pi}i^*_H(K_1\boxtimes K_2),T)\simeq(\psi_{p_1}K_1,T)\boxtimes(\psi_{p_2}K_2,T),
\end{equation*}
the assertion is reduced to the following:
\end{proof}

\begin{lem} \label{L2.3}
Let $\pi:Y=X{\times}S\to X$ be a trivial line bundle.
Put
\begin{equation*}
L=\bl\{s\in S\,\big|\,0<{\rm Re}\,s <1, {\rm Im}\,s=0\br\},\q P_a=\{s=a\}\,(a=0,1),
\end{equation*}
and $S'=S\setminus(P_0\cup P_1),Y'=X{\times}S'$.
Let $(K,T)\in D_c(A_Y;T)$, and assume that the restriction of $\Hc^kK$ to the fibers of $Y'\to X$ is locally constant and that to $X{\times}P_a\,(a=0,1)$ is $0$.
Then for $P\in L$, we have a canonical isomorphism
\begin{equation*}
\pi_*(K,T)\simeq i^*_P(K,T)[-1]\q\h{in}\,\,\,D_c(A_X;T),
\end{equation*}
where $i_P:X{\times}P\into Y=X{\times}S$ is a natural inclusion.
\end{lem}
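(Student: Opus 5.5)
The plan is to compute $\pi_*K$ by a Mayer--Vietoris (or closed--open) decomposition of $S$ adapted to the three special fibres: the two punctures $P_0,P_1$ and the arc $L$ joining them. Since $K$ is only assumed cohomologically locally constant along the fibres of $Y'\to X$, I would first reduce to a statement about the fibre $S$ with coefficients in $D_c(A_X;T)$. Concretely, I will use the open--closed triangle for $U:=X{\times}(S\setminus\overline{L})$ and its complement $X{\times}\overline{L}$. Here $\overline{L}=L\cup\{0,1\}$ is the closed segment $[0,1]$.

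On $X{\times}\overline{L}$ the restriction of $K$ vanishes at the endpoints $X{\times}P_0$ and $X{\times}P_1$, and is cohomologically locally constant along $X{\times}L$, where $L\simeq(0,1)$. Hence the restriction is $(p_L)^*i_P^*K$ extended by zero from the open interval. So $\pi_*$ of it is $i_P^*K\otimes{\bf R}\Gamma_c((0,1),A)\simeq i_P^*K[-1]$, where the identification for $P\in L$ uses the contractibility of $L$ and is canonical. For the open part I will show $\pi_*(j_{U!}K|_U)=0$, or more conveniently use the dual triangle $\pi_*$ of $j_{U!}j_U^*K\to K\to i_{\overline{L}*}i_{\overline{L}}^*K$. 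The open set $S\setminus\overline{L}$ retracts onto a circle around the segment. On its complement of the two points, $K$ is locally constant along fibres, and the monodromy around $\overline{L}$ is the product of the local monodromies at $P_0$ and $P_1$.

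The key point is to show that $\pi_*j_{U!}j_U^*K=0$. Fibrewise, this is the compactly supported cohomology, relative to the direction of $\overline{L}$, of a sheaf on $S\setminus\overline{L}\cong$ a punctured disc complement. I would instead compute $\pi_*K$ directly by contracting $S$ onto a neighbourhood of $\overline{L}$. The restriction is locally constant outside $\overline{L}$, so $\pi_*K\simeq\pi_*(K|_{X\times V})$ for a small tubular neighbourhood $V$ of $\overline{L}$, by the usual argument using the radial structure at infinity; this step needs $K$ locally constant near infinity and constructibility. Then, to compute $K|_{X\times V}$, I use Mayer--Vietoris with small discs $D_0,D_1$ around $P_0,P_1$ and a strip around $L$. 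On each disc $D_a$ we have ${\bf R}\Gamma(D_a,K)\simeq i_{P_a}^*K=0$ by the constructible (conic) structure. On the strip, ${\bf R}\Gamma$ is $i_P^*K$. On the two intersections, which are half-discs, it is also $i_P^*K$. The Mayer--Vietoris triangle then gives $\pi_*K\to 0\oplus 0\oplus i_P^*K\to i_P^*K\oplus i_P^*K$, where the last map is the diagonal. Its cone yields $\pi_*K\simeq i_P^*K[-1]$.

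The $T$-action is compatible throughout, since every map used is induced by restriction morphisms of $A_X[T,T^{-1}]$-complexes. The main obstacle I expect is the reduction from $S$ to the neighbourhood $V$ of $\overline{L}$: the hypothesis gives local constancy only on fibres over $S'$, and one must check that $\pi_*$ does not pick up contributions from infinity. I would handle this using a conic (radial) retraction of $S$ onto a large disc and Lemma~\ref{L1.2}-type local triviality, and then shrink. A second subtle point is canonicity, namely that the result is independent of the choice of $P\in L$. This follows from the contractibility of $L$ and a fixed orientation convention for the shift $[-1]$.
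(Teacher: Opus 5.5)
Your argument is correct, and it reaches the result by a somewhat different final computation from the paper's.

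\textbf{The paper's route.} The paper works on the closed segment. It takes $\pi_*(K,T)\simto\pi'_*(K',T)$, with $K'=i_{\Lo}^*K$, as evident. Since the stalks vanish at the endpoints, $K'$ is the extension by zero of $K''=i_L^*K$, so $\pi'_*K'=\pi''_!K''$. It then identifies $K''\simeq\pi''^*i_P^*K$. The result follows from the projection formula and the trace $\pi''_!A_{X{\times}L}\simeq A_X[-1]$ given by an orientation of $L$.

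\textbf{Relation to your argument.} The paper's computation is exactly the segment computation in your first paragraph. The sub-goal $\pi_*j_{U!}j_U^*K=0$ that you set aside is equivalent to the paper's first isomorphism. It follows at once from your reduction $\pi_*K\simeq\pi_*(K|_{X\times V})$, by passing to the limit over neighbourhoods $V$ of the compact set $\Lo$ (proper base change on $X{\times}\Lo$). So both proofs rest on the same retraction fact, and the paper does not spell it out either. Both also use the same relative local triviality of $K$ along the fibres over $X$.

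\textbf{Your Mayer--Vietoris route.} Your computation on $V=D_0\cup D_1\cup\Sigma$ is a valid alternative that uses only open sets. The vanishing at $P_a$ enters as ${\bf R}\Gamma(D_a,K)\simeq K_{P_a}=0$. The shift $[-1]$ appears as the cone of the diagonal $i_P^*K\to i_P^*K\oplus i_P^*K$. Identifying its cokernel with $i_P^*K$, via $(a,b)\mapsto a-b$ or $b-a$, replaces the choice of orientation.

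\textbf{What the paper's form buys.} It makes the trace morphism along $L$ explicit. In Proposition~\ref{P5.1}, the isomorphism of this lemma is compared with the $\D$-module side precisely because this trace is integration of one-forms over $L=(0,1)$, which produces the beta-function twist \eqref{5.3}. With your description you would first have to identify the \v{C}ech connecting morphism with that integration. This holds up to the sign convention, but it is an extra step.
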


\begin{proof}
Let $\Lo$ be the closure of $L$ in $S$ (i.e.
$\Lo\simeq [0,1])$).
Consider the inclusion morphisms
\begin{equation*}
j:X{\times}L\into X{\times}\Lo,\q i_L:X{\times}L\into Y,\q i_{\Lo}:X{\times}\Lo\into Y.
\end{equation*}
Put
\begin{equation*}
\pi'=\pi\ssc i_{\Lo},\q\pi''=\pi\ssc i_L,\q K'=i^*_{\Lo}K,\q K''=i^*_LK.
\end{equation*}
Then we have natural isomorphisms
\begin{equation*}
\pi_*(K,T)\simto\pi'_*(K',T)\simeq\pi''_!(K'',T),
\end{equation*}
\begin{equation*}
(K'',T)\buildrel\sim\over\gets\pi''^*\pi''_*(K'',T)\simto\pi''^*i^*_P(K,T),
\end{equation*}
and the assertion follows from
\begin{equation*}
\pi''_!\pi''^*i^*_P(K,T)=\pi''_!A_{X{\times}L}\otimes_Ai^*_P(K,T),
\end{equation*}
since $\pi''_!A_{X{\times}L}\simeq A_X[-1]$, where the isomorphism is given by an orientation of $L=]0,1[$.
\end{proof}

\begin{prop} \label{P2.4}
Let $Y_a=X_a{\times}S$, $\Yt=Y_1{\times}Y_2$, $Y=X{\times}S$, $\pi:\Yt\to Y$, $p_a:Y_a\to S$, $p:Y\to S$ be as in Proposition~{\rm \ref{P2.2}}.
Then for $K_a\in D^b_c(A_{Y_a})_{\mon}$, $\pi_*(K_1\boxtimes K_2)\in D_c(A_Y)_{\mon}$ and we have canonical isomorphisms in $D_c(A_X;T):$
\begin{equation*}
(\phim_p\pi_*(K_1\boxtimes K_2),T)\simeq(\phim_{p_1}K_1,T)\boxtimes (\phim_{p_2}K_2,T),
\end{equation*}
where $\psim=\psi[-1],\phim=\varphi [-1]$.
\end{prop}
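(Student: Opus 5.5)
We reduce Proposition~\ref{P2.4} to Proposition~\ref{P2.2} via the functor $\mu$ of \eqref{2.1} and the last isomorphism of \eqref{2.9}.

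First, monodromicity of $\pi_*(K_1\boxtimes K_2)$. The $\C^*$-action on $\Yt=X{\times}S{\times}S$ (diagonal scaling of $(t_1,t_2)$) is compatible with $\pi$. Each $K_a$ is cohomologically locally constant along the $\C^*$-orbits, so $K_1\boxtimes K_2$ is as well. Constructibility is preserved because the direct image is computed fiberwise by compactifying the fibers of $\pi$, as in the proof of Proposition~\ref{P2.2}. Hence $\pi_*(K_1\boxtimes K_2)\in D_c(A_Y)_{\mon}$.

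For monodromical $K$ on a trivial line bundle with projection $p$, \eqref{2.9} (with $\Sp_X K=K$ by \eqref{2.5}) gives
\begin{equation*}
(\varphi_pK,T)\simeq(\psi_p\mu(K),T),
\end{equation*}
and likewise for $p_1,p_2$. It therefore suffices to show
\begin{equation*}
\mu\bl(\pi_*(K_1\boxtimes K_2)\br)\simeq\pi_*\bl(\mu(K_1)\boxtimes\mu(K_2)\br)
\end{equation*}
compatibly with $T$. Proposition~\ref{P2.2} applies to $\mu(K_a)\in D^b_c(A_{Y_a})_{\mon,!}$ and gives the required isomorphism for $\psim$, hence for $\phim$.

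To prove the displayed isomorphism, write $\mu(K_a)=C(\pi_a^*\pi_{a*}K_a\to K_a)$, where $\pi_a:Y_a\to X_a$ and $\pi_{a*}K_a=i_a^*K_a$. Then $\mu(K_1)\boxtimes\mu(K_2)$ has a three-step filtration with graded pieces
\begin{equation*}
K_1\boxtimes K_2,\q (\pi_1^*i_1^*K_1\boxtimes K_2)[1]\oplus(K_1\boxtimes\pi_2^*i_2^*K_2)[1],\q(\pi_1^*i_1^*K_1\boxtimes\pi_2^*i_2^*K_2)[2].
\end{equation*}
We apply $\pi_*$ to each piece.

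\begin{itemize}
\item A term $\pi_1^*L_1\boxtimes K_2$ is pulled back along $t_1$. Because $(t_1,t_2)\mapsto(t_1+t_2,t_2)$ is an automorphism, $\pi_*(\pi_1^*L_1\boxtimes K_2)$ is the pullback to $Y$ of the direct image of $L_1\boxtimes K_2$ to $X$. That direct image is $L_1\boxtimes i_2^*K_2$, using $\pi_{2*}=i_2^*$ on monodromical complexes. The term is therefore of the form $\pi^*(\,\cdot\,)$, and $\varphi_p$ kills it.
\item The last piece, $\pi_1^*i_1^*K_1\boxtimes\pi_2^*i_2^*K_2$, is pulled back from $X$ along $\Yt\to X$. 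Its direct image is $\pi^*\pi_*$ of it, so it is again killed by $\varphi_p$, equivalently by $\mu$.
\end{itemize}

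Applying $\mu$ to $\pi_*(\mu(K_1)\boxtimes\mu(K_2))$ thus gives $\mu(\pi_*(K_1\boxtimes K_2))$ up to canonical isomorphism. Since $\pi_*(\mu(K_1)\boxtimes\mu(K_2))$ is already in $D_c(A_Y)_{\mon,!}$ by Proposition~\ref{P2.2}, $\mu$ acts as the identity on it. This yields the displayed isomorphism.

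The main obstacle is making this canonical in $D_c(A_X;T)$ rather than only in the derived category, since mapping cones are not functorial. To avoid this, I would work directly with $\varphi_p$ instead of $\mu$:

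\begin{itemize}
\item Apply the exact functor $\phim_p$ to the filtration, i.e.\ to the natural morphism $\pi_*(K_1\boxtimes K_2)\to\pi_*(\mu(K_1)\boxtimes\mu(K_2))$ coming from $K_a\to\mu(K_a)$.
\item By the vanishing just shown, $\phim_p$ turns this morphism into an isomorphism, which is canonical and $T$-equivariant.
\item Compose with Proposition~\ref{P2.2} and with $\psim_{p_a}\mu(K_a)\simeq\phim_{p_a}K_a$ from \eqref{2.9}.
\end{itemize}

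The points to check carefully are the shift conventions ($\phim=\varphi[-1]$, and the $[1]$ hidden in $\mu$) and that the monodromy on the right-hand side is $T\boxtimes T$.
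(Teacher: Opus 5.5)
Your proposal is correct and is essentially the paper's argument: reduce to the $D_c(A_{Y_a})_{\mon,!}$ case via $K_a\to\mu(K_a)$, show that every other piece of the decomposition has direct image pulled back from $X$ (so $\phim_p$ kills it), then use Proposition~\ref{P2.2} together with the fact that $\can$ is an isomorphism on objects of $D_c(A_Y)_{\mon,!}$.

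The only difference is bookkeeping. The paper writes $K_a=[\mu(K_a)\to\pi_a^*\pi_{a*}K_a[1]]$, and for the mixed terms it notes that $\pi_*(\mu(K_1)\boxtimes\pi_2^*\pi_{2*}K_2[1])$ vanishes outright, since $\pi_{1*}\mu(K_1)=i_1^*\mu(K_1)=0$. You filter $\mu(K_1)\boxtimes\mu(K_2)$ instead, and show that the mixed terms have direct images of the form $\pi^*(\cdot)$ rather than zero.
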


\begin{proof}
The assertion follows from Proposition~\ref{P2.2} if $K_a\in D^b_c(A_{Y_a})_{\mon,!}$, since the morphisms $\can$ are isomorphisms in this case.
In general we may assume
\begin{equation*}
K_a=[K'_a\buildrel u_a\over\longrightarrow K''_a],
\end{equation*}
such that $K'_a\in D^b_c(A_{Y_a})_{\mon,!}$ and $\pi^*_a\pi_{a*}K''_a=K''_a$, where $\pi_a:Y_a=X_a{\times}S\to X_a$, and $[A\to B]$ means the single complex with ${\rm deg}A=0$.
Indeed, it is enough to put
\begin{equation*}
K'_a=\mu(K_a)\,\,(\h{see \eqref{2.1}}),\q K''_a=\pi^*_a\pi_{a*}K_a[1].
\end{equation*}
Here we may assume that $K''_a$ is $A$-flat by replacing $\pi_{a*}K_a[1]$ with its $A$-flat resolution, and then $K'_a$ is $A$-flat and belongs to $D_c(A_{Y_a};\mon,!)$ by using an $A$-flat resolution of $\tilde\pi_{a*}\rho*_a(K'_a|_{Y_a^*})$ (see the proof of \eqref{2.7}).
We have
\begin{equation*}
\pi_*(K'_1\boxtimes K''_2)=\pi_*(K''_1\boxtimes K'_2)=0.
\end{equation*}
This implies
\begin{equation*}
\phim_p\pi_*(K_1\boxtimes K_2)\simto\phim_p\pi_*(K'_1\boxtimes K'_2).
\end{equation*}
So the assertion follows.
\end{proof}
\sk
As a corollary of Theorem~\ref{T1.5} and Proposition~\ref{P2.4} we get the following.

\begin{thm} \label{T2.5}
Let $X_a$ be a complex analytic space with $f_a$ a holomorphic function on $X_a$ and $K_a\in D^b_c(A_{X_a})$ for $a=1,2$.
Put $X=X_1{\times}X_2$, $f=f_1{+}f_2$ on $X$, $K=K_1\boxtimes K_2$, and $X_0=f^{-1}_1(0){\times}f^{-1}_2(0)$ with $i_0:X_0\into X$.
Then we have canonical isomorphisms in $D_c(A_{X_0};T):$
\begin{equation*}
(\phim_fK,T)\simeq (\phim_{f_1}K_1,T)\boxtimes (\phim_{f_2}K_2,T).
\end{equation*}
Here we assume that $\varphi_{f_a-c_a}K_a=0$ for any $c_a\in\C^*$ replacing $X_a$ with a sufficiently small open neighborhood of $f_a^{-1}(0)\subset X_a$ for $a\eq1$ or $2$.
\end{thm}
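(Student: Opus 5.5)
The plan is to reduce Theorem~\ref{T2.5} to the monodromical situation of Proposition~\ref{P2.4} by passing to the graphs of $f_a$ and specializing to normal cones. Let $g_a:X_a\to Y_a:=X_a{\times}S$ be the graph embedding $x\mapsto(x,f_a(x))$, and put $\Kt_a:=g_{a*}K_a$. Then $\supp\Kt_a$ lies in the graph, so the composition $\supp\Kt_a\to Y_a\to X_a$ is injective, which is the hypothesis of Theorem~\ref{T1.5}. Let $\pi:\Yt=X{\times}S{\times}S\to Y=X{\times}S$ be induced by $(t_1,t_2)\mapsto t_1{+}t_2$. Then $\pi_*(\Kt_1\boxtimes\Kt_2)=g_*K$, where $g$ is the graph embedding of $f=f_1{+}f_2$, and $\pi$ restricted to the support is the identification of graphs, hence proper.

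The vanishing cycles of $f$ are those of the projection $p:Y\to S$ applied to $g_*K$, so $\varphi_fK=\varphi_p g_*K$, and similarly $\varphi_{f_a}K_a=\varphi_{p_a}\Kt_a$. By Lemma~\ref{L2.1}, applied to $p$ with $X\times\{0\}$ replaced by the zero section (so that the normal cone is $Y$ itself), we get
\begin{equation*}
(\phim_fK,T)\simeq(\phim_p\Sp_X g_*K,T),\qquad(\phim_{f_a}K_a,T)\simeq(\phim_{p_a}\Sp_{X_a}\Kt_a,T).
\end{equation*}
Theorem~\ref{T1.5}, restricted to $C=X'_1{\times}X'_2{\times}S$, gives
\begin{equation*}
\Sp_X\pi_*(\Kt_1\boxtimes\Kt_2)\simeq\pi_*(\Sp_{X_1}\Kt_1\boxtimes\Sp_{X_2}\Kt_2)
\end{equation*}
over $C$. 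Here $X'_a=\supp K_a\cap f_a^{-1}(0)$, and $X'_1{\times}X'_2$ is the part of $X_0$ where anything is nonzero. Since $\Sp_{X_a}\Kt_a\in D^b_c(A_{Y_a})_{\mon}$ by Lemma~\ref{L2.1}, Proposition~\ref{P2.4} applies and yields
\begin{equation*}
(\phim_p\pi_*(\Sp\Kt_1\boxtimes\Sp\Kt_2),T)\simeq(\phim_{p_1}\Sp\Kt_1,T)\boxtimes(\phim_{p_2}\Sp\Kt_2,T).
\end{equation*}
Composing these isomorphisms gives the assertion on $X_0$.

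Two points need care. The first is the role of the hypothesis $\varphi_{f_a-c_a}K_a=0$ for $c_a\neq0$. Theorem~\ref{T1.5} only describes $\Sp_X$ near the zero section over $X'_1{\times}X'_2$. However, $\varphi_p$ of $g_*K$ at points of $X_0$ also receives contributions from points of $f^{-1}(0)$ with $f_1=c$, $f_2=-c$, $c\ne0$. These contributions are the Thom--Sebastiani terms $\varphi_{f_1-c}K_1\boxtimes\varphi_{f_2+c}K_2$, which are killed by the hypothesis after shrinking $X_a$ to a neighborhood of $f_a^{-1}(0)$. I would make this precise by checking that $\pi_*(\Kt_1\boxtimes\Kt_2)$ near $X_0{\times}\{0\}$ agrees with the contribution localized near the zero sections. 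More precisely, the stalks of $\varphi_p$ at $X_0$ computed through $\pi_*$ only see the fiber $\{t_1+t_2=0\}$, and the part away from $t_1=t_2=0$ has vanishing $\varphi$ by the assumption together with properness of $\pi$ on supports. This is the step I expect to be the main obstacle, since one must control the noncompactness of the fibers of $\pi$ and show the assumption is exactly what is needed.

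The second point is that all isomorphisms must be compatible with $T$. This holds because those in Lemma~\ref{L2.1} and Proposition~\ref{P2.4} are stated in $D_c(A_X;T)$, and the isomorphism of Theorem~\ref{T1.5} is an isomorphism of complexes on $C_XY$, to which $\phim_p$ is then applied functorially.
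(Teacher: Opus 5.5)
Your argument is the paper's own: Theorem~\ref{T2.5} is stated there as a corollary of Theorem~\ref{T1.5} and Proposition~\ref{P2.4}, with exactly your graph-embedding set-up and Lemma~\ref{L2.1} giving $\phim_fK\simeq\phim_p\Sp_X g_*K$ (plus the routine fact that $\phim_p$ of a monodromical complex commutes with restriction to $C$). The step you single out as the main obstacle is not one, and your framing of it is wrong: everything is local over $X$, so stalks at points of $X_0$ receive nothing from $\{f_1=c,\,f_2=-c\}$, and the noncompact fibres of $\pi$ on $\supp(\Sp\Kt_1\boxtimes\Sp\Kt_2)$ are already handled inside Propositions~\ref{P1.4} and~\ref{P2.2}; the hypothesis $\varphi_{f_a-c_a}K_a=0$ only serves to make $\phim_fK$ (a priori a complex on $f^{-1}(0)$) vanish off $X_0$, by local Thom--Sebastiani at those other points as you say, so that the statement makes sense in $D_c(A_{X_0};T)$.
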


\section{Specialization of mixed Hodge modules} \label{S3}

We assume $A$ is a subfield of $\R$ from non on in this paper. We will denote by $\MHM(X,A)$ the abelian category of polarizable $A$-mixed Hodge modules on an complex analytic space $X$, and $D^b\MHM(X,A)$ its derived category of bounded complexes.
\sk
Let $X$ be a closed submanifold of a complex manifold $Y$.
We define as in Section~\ref{S1}
\begin{equation*}
p:D_XY\to S,\q q:D_XY\to Y\q\h{with}\q p^{-1}(0)=C_XY.
\end{equation*}
Let $M$ be a regular holonomic $\D_Y$-module.
Let $\theta$ be a locally defined vector field of the form
\begin{equation*}
\msum_{i=1}^l\,y_i\rd_{y_i}
\end{equation*}
for local coordinates $y_1,\cdots, y_m$ such that $X=\{y_1=\cdots=y_l=0\}$.
Let $G$ is a subset of $\C$ such that $0\in G$ and the composition
\begin{equation*}
G\to\C\to\C /\Z
\end{equation*}
is bijective.
We denote by $V$ Kashiwara's filtration on $M$ along $X$ \cite{Ka} (see also \cite{Mal} in the one-codimensional case).
Here $V$ is indexed decreasingly and the action of $\theta$ on ${\rm Gr}_V^k M$ has the minimal polynomial whose roots are contained in $G+k$.
By definition $\Gr_V M$ is a graded $\Gr_V\D_Y$-module, where the filtration $V$ on $\D_Y$ is defined by
\begin{equation*}
V^i\D_Y=\{P\in\D_Y:P{\I}_X^j\subset {\I}_X^{i+j}\,\,\,\hbox{for any $j$}\}.
\end{equation*}
Then
\begin{equation*}
[\Oc_{C_XY}]:=\mopl_{j\ges 0}\,{\I}_X^j/{\I}_X^{j+1}\subset\Gr_V\D_Y,\q\Gr_V\D_Y\subset\pi_*\D_{C_XY},\,\,\,\h{where}\,\,\,\pi:C_XY\to X.
\end{equation*}
By definition
\begin{equation} \label{3.1}
\Sp_X M=\D_{C_XY}\motim_{\pi^{-1}\Gr_V\D_Y}\pi^{-1}\Gr_VM=\Oc_{C_XY}\motim_{\pi^{-1}[\Oc_{C_XY}]}\pi^{-1}\Gr_VM.
\end{equation}
In the case ${\rm codim}_YX=1$ and $C_XY$ is trivial (e.g.
$X$ is globally a principal divisor), $\D_X$ is a subring of $\Gr_V^0\D_Y$ using the trivialization.
We define the $\D_X$-modules:
\begin{equation*}
\psi_fM=\Gr_V^0M,\qquad\varphi_fM=\Gr_V^{-1}M
\end{equation*}
if $X=f^{-1}(0)$ for a smooth morphism $f:Y\to S$.
We can verify that $\Sp_XM,\psi_fM,\varphi_fM$ are independent of the choice of $G$.
These definitions are compatible with the corresponding functors on $D^b(\C_Y)$ by the de~Rham functor (see \cite{Ka}, \cite{Mal}, \cite{mhp}, \cite{dual} for $\psi,\varphi$).
For $\Sp_X$ we have
\begin{equation} \label{3.2}
\Sp_XM=\psi_p\Hc^1q^*M,
\end{equation}
where $\psi,\varphi$ on regular holonomic $\D$-modules correspond to $\psim=\psi[-1],\phim=\varphi[-1]$.
Let $\pi':D_XY\to Y$ be the natural projection, and $j:p^{-1}(S^*)\into D_XY $ the natural inclusion.
Then
\begin{equation*}
j_*j^*{\Hc}^1q^*M=\Oc_{D_XY}\motim_{\pi'{}^{-1}[\Oc_{D_XY}}\pi'{}^{-1}M[s,s^{-1}]
\end{equation*}
where $[\Oc_{D_XY}]:=\mopl_{j\in\Z}\,({\I}_X^{-j}\motim s^j)$ (see \eqref{1.1}), and the filtration $V$ of $j_*j^*{\Hc}^1q^*M$ along $p^{-1}(0)$ is given by
\begin{equation} \label{3.3}
V^i(j_*j^*{\Hc}^1q^*M)=\Oc_{D_XY}\motim_{\pi'{}^{-1}[\Oc_{D_XY}]}\pi'{}^{-1}\bl(\mopl_{j\in\Z}V^{i-j}M\otimes s^j\br).
\end{equation}
This implies \eqref{3.2}, since $\psi_p{\Hc}^1q^*M=\psi_pj_*j^* {\Hc}^1q^*M$.
\sk
Now let $\M\in\MHM(Y,A)$ be a mixed $A$-Hodge module. We have
\begin{equation*}
\Sp_X\M=\psi_p{\Hc}^1q^*\M\in\MHM(C_XY,A),
\end{equation*}
see \cite[2.30]{mhm}.
The functor $\Sp_X$ is exact and induces
\begin{equation*}
\Sp_X:D^b\MHM(Y,A)\to D^b\MHM(C_XY,A),
\end{equation*}
since the forgetful functor $\MHM(X,A)\to D^b_c(A_X)^{[0]}$ is exact and faithful, and commutes with $\Sp_X$. (Here $D^b_c(A_X)^{[0]}$ denotes the abelian full subcategory of $D^b_c(A_X)$ which is stable by the dual functor $\DD$ and is constructed in \cite{BBD}.)
Let $E_k$ be the standard unipotent variation of $A$-mixed Hodge structures of rank $k{+}1$ on $S^*$ such that the monodromy has only one Jordan block and $\Gr_i^WE_k$ is a constant variation of Hodge structure of rank one if $i=0,2,\cdots, 2k$ and 0 otherwise.
Here standard means that $E_k$ satisfy the following conditions:
\begin{equation} \label{3.4}
\begin{aligned}
&\h{The mixed Hodge structure at $s=1$ naturally splits over $A$,}\\&\h{and its graded quotients $(\Gr_{2i}^WE_k)_{s=1}$ are given isomorphisms}\\&\h{with $A(-i)$ in a compatible way with the action of $N$,}
\end{aligned}
\end{equation}
\begin{equation} \label{3.5}
\begin{aligned}
&\h{The Hodge filtration $F$ on $E_k$ is generated over $\Oc_{S^*}$ over its}\\&\h{intersection with ${\rm Ker}(s\rd_s)^k\subset\Gamma(S^*,\Oc_{S^*}(E_k))$, where $\Oc_{S^*}(E_k)$}\\&\h{is the underlying $\Oc_{S^*}$-module of $E_k$,}
\end{aligned}
\end{equation}

Note that \eqref{3.5} is equivalent to that the Hodge filtration $F$ of $E_k$ is obtained like nilpotent orbit, and implies that the limit mixed Hodge structure is naturally isomorphic to the mixed Hodge structure at $s=1$.
Let ${\bar A}$ be the algebraic closure of $A$ in $\C$ with $G={\rm Gal}\,({\bar A}/A)$. Let $\Lambda$ be the set of closed points of ${\rm Spec}\,(A[x,x^{-1}])$, that is, the set of monic irreducible polynomials $P\in A[x]$ such that $P(0)\neq 0$.
Then $\Lambda\simeq {\bar A}^*/G$, where $\lambda=\{a_1,\cdots, a_d\}=Ga_i\subset {\bar A}^*$ corresponds to $P=\prod_i(x-a_i)$.
For $\lambda\in\Lambda$, we denote by $E^\lambda$ the standard semisimple variation of $A$-mixed Hodge structure of rank $d=|\lambda|$, with monodromy $\lambda$, and of type $(0,0)$.
Here standard means that
\begin{equation} \label{3.6}
\begin{aligned}
&\h{The restriction of $E^{\lambda}$ to $s\eq1$ is endowed with an isomorphism to $A[T]/(P(T))$}\\&\h{compatible with the action of the monodromy $T$, where $P$ corresponds to $\lambda$}\\&\h{as above and $i=\sqrt {-1}$ is chosen to define the monodromy $T$.}
\end{aligned}
\end{equation}

We define $E_k^\lambda=E^\lambda\otimes E_k$, and the corresponding mixed Hodge module on $S^*$ will be denoted by $E_k^\lambda[1]$ for the compatibility with the forgetful functor (i.e.
$E_k^\lambda\in D^b\MHM(S^*)$ and $H^iE_k^\lambda=0$ for $i\neq 1$).
Then for a morphism $f:Y^*\to S^*$ and for $\M\in\MHM(Y^*,A)$, $\M\otimes f^*E_k^\lambda$ is globally well-defined on $Y^*$ and belongs to $\MHM(Y^*,A)$, since $\otimes f^*E_k^{\lambda}$ is defined by using the pull-back to the ambient spaces of the local closed embeddings of $Y^*$.
Let $E_{k,A}^{\lambda}$ denote the underlying $A$-local system of $E_k^{\lambda}$, and $\rho:\St^*\to S^*$ a universal covering where we choose a base point $\tilde 1\in\St^*$ over $1:=\{s=1\}\in S^*$.
Then $E_{k,A}^{\lambda}$ is canonically identified with a local subsystem of $\rho_*A_{\St^*}$.
Indeed, the local subsystems of $\rho_*A_{\St^*}$ correspond bijectively to the finite dimensional vector subspaces of
\begin{equation} \label{3.7}
{\Gamma(\St^*,\rho^*\rho_*A_{\St^*}})=(\rho_*A_{\St^*})_1= (\rho^*\rho_*A_{\St^*})_{\tilde 1}=\prod_{s'\in\rho^{-1}(1)}A_{s'}
\end{equation}
invariant by the action of monodromy $T$, and $E_{k,A}^{\lambda}$ corresponds to the subspace
\begin{equation*}
{\Gamma(\St^*,\rho^*\rho_*A_{\St^*}})_k^\lambda:={\rm Ker}\, N^{k+1}\subset{\Gamma(\St^*,\rho^*\rho_*A_{\St^*}})^{\lambda},
\end{equation*}
where
\begin{equation*}
{\Gamma(\St^*,\rho^*\rho_*A_{\St^*}})^{\lambda}:=\mcup_j{\rm Ker}\,P(T)^j\subset\Gamma(\St^*,\rho^*\rho_* A_{\St^*}),
\end{equation*}
with $P(T)=\prod_{\alpha\in\lambda}(T-\alpha)$, and $N=\log T_u$ with $T=T_sT_u$ the Jordan decomposition of the monodromy $T$.
Here the Tate twist $(-1)=\Z(2\pi i)^{-1}\otimes_\Z$ in the usual definition of $N$ is trivialized by choosing $i=\sqrt {-1}$.
For the above identification we use the isomorphisms in \eqref{3.4}, \eqref{3.6} together with the multiplicative structure of $\rho_*A_{\St^*}$ and ${\Gamma(\St^*,\rho^*\rho_*A_{\St^*}})$, see \cite[2.3]{dual}.
Here we choose a coordinate $\tilde s$ of $\St^*$ such that $\rho^* s=\exp(2\pi i\tilde s)$ and $\tilde 1=\{\tilde s=0\}$.
Then $\rho^{-1}(1)=\Z$, and the basis of $(\Gr_{2i}^WE_{k,A}(i)) _{s=1}$ in \eqref{3.4} corresponds to $\tau^i/i!$ for $0\les i\les k$.
Here $\tau\in\prod_{j\in\Z}A(1)$ is defined by $-(2\pi i)\otimes j\in A(1)$ at $j\in\Z$.
For $E_A^{\lambda}$ we define the isomorphism
\begin{equation*}
(E_A^{\lambda})_{s=1}={\Gamma(\St^*,\rho^*\rho_*A_{\St^*}}) _0^{\lambda},
\end{equation*}
so that $1\in {\bar A}={\bar A}[T]/(T-\alpha)$ corresponds, after the scalar extension by $\motim_A{\bar A}$, to the element of $\prod_{j\in\Z}{\bar A}$ defined by $\alpha^{-j}\in{\bar A}$ at $j\in\Z$.
Here we use \eqref{3.6}--\eqref{3.7} and the canonical isomorphism
\begin{equation*}
{\bar A}[T]/(P(T))\simto\mopl_{\alpha\in\lambda}\,{\bar A}[T]/(T-\alpha)\simeq\mopl_{\alpha\in\lambda}\,{\bar A}.
\end{equation*}

\begin{lem} \label{L3.1}
Let $f:Y\to S$ and $X=f^{-1}(0)$ be as in Lemma~{\rm\ref{L1.2}}.
For $\M\in\MHM(Y,A)$, we have a canonical decomposition compatible with the action of $T_s:$
\begin{equation*}
\psi_f\M\simeq\mopl_{\lambda\in\Lambda}\,\psi_{f,\lambda}\M\q\hbox{in\,\,\,\,$\MHM(X,A)$,}
\end{equation*}
such that the eigenvalues of $T_s$ on $\psi_{f,\lambda}\M$ are $\lambda$.
\end{lem}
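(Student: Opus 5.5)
We will decompose $\psi_f\M$ as an object of $\MHM(X,A)$ by means of an idempotent decomposition coming from the action of the semisimple part $T_s$ of the monodromy. Recall from \cite{mhm} that $\psi_f\M$ is a mixed Hodge module endowed with the monodromy $T$. Only the semisimple part $T_s$ is an endomorphism of mixed Hodge modules, $T_u$ being $\exp N$ with $N:\psi_f\M\to\psi_f\M(-1)$. Concretely, $\psi_f\M=\mopl_{\alpha}\psi_{f,\alpha}$ after extension of scalars to $\C$. Here the summands $\psi_{f,\alpha}$ correspond, on the $\D$-module side, to $\mopl_{-1<\beta\les0}\Gr_V^\beta$ with $\alpha=\exp(-2\pi i\beta)$, and $T_s$ acts on $\psi_{f,\alpha}$ by $\alpha$. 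This decomposition is compatible with $F$ and $W$, since each $\Gr_V^\beta$ is filtered by the induced $F$ and $W$ is the relative monodromy filtration.

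The first step is to show that $T_s$ is defined over $A$, i.e.\ that it is an endomorphism of $\psi_f\M$ in $\MHM(X,A)$. Its underlying $A$-perverse sheaf is $\psim_fK$, with $K$ the underlying $A$-complex, and this carries the $A$-rational monodromy $T$. Since $\psim_fK$ is constructible, $T$ satisfies a polynomial identity $Q(T)=0$ locally, with $Q\in A[x]$ and $Q(0)\ne0$. Hence the Jordan decomposition $T=T_sT_u$ can be taken inside $A[T]$, and $T_s$ is $A$-rational. It agrees with the $T_s$ on the $\D$-module side under the comparison isomorphism, which commutes with $T$, and it preserves $F$ and $W$ by the previous paragraph. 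Faithfulness of the forgetful functor then shows that $T_s$ is a morphism in $\MHM(X,A)$.

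Next we construct the idempotents. Locally on $X$ only finitely many eigenvalues occur. For each $\lambda\in\Lambda$, with $P_\lambda\in A[x]$ the corresponding irreducible polynomial, the Chinese remainder theorem gives $e_\lambda\in A[x]$ with $e_\lambda(T_s)$ the projector onto $\Ker P_\lambda(T_s)$. Since $T_s$ is semisimple, this kernel equals $\mcup_j\Ker P_\lambda(T)^j$. The projectors $e_\lambda(T_s)$ are idempotent endomorphisms in the abelian category $\MHM(X,A)$. Therefore their images $\psi_{f,\lambda}\M:=\Imm e_\lambda(T_s)$ are mixed Hodge modules, and $\psi_f\M=\mopl_\lambda\psi_{f,\lambda}\M$, with eigenvalues of $T_s$ on $\psi_{f,\lambda}\M$ equal to $\lambda$.

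It remains to globalize and to check canonicity. The idempotents $e_\lambda(T_s)$ are intrinsic: they are independent of the choice of $Q$ and of the local choices. Hence they glue, and the sum is locally finite, which gives a well-defined decomposition over $X$. The main obstacle is the first step, the $A$-rationality of $T_s$ together with its compatibility with the Hodge and weight filtrations. I would handle it via the $V$-filtration description of $\psi_f$ in \cite{mhm}, where $T_s$ acts on $\Gr_V^\beta$ by $\exp(-2\pi i\beta)$, combined with the rationality of $T$ on the perverse-sheaf side.
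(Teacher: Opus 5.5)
Your argument is correct and takes essentially the route the paper intends by ``this follows from the definition of $\psi_f\M$'': in \cite{mhp}, \cite{mhm} the semisimple part $T_s$ is an $A$-rational endomorphism of $\psi_f\M$ in $\MHM(X,A)$, and splitting it by the idempotents $e_\lambda(T_s)$ for the Galois orbits $\lambda\in\Lambda$ gives the decomposition. One small correction: a filtration-compatible $A$-rational endomorphism of the underlying data is a morphism in $\MHM(X,A)$ because $\MHM(X,A)$ is a \emph{full} subcategory of filtered $\D$-modules with $A$-structure and weight filtration (or simply because $T_s$ is already part of the data of $\psi_f\M$ in \cite{mhm}), not because the forgetful functor is faithful, which only gives uniqueness.
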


\begin{proof}
This follows from the definition of $\psi_f\M$, see \cite{mhp}, \cite{mhm}.
\end{proof}

\begin{lem} \label{L3.2}
With the notation of Section {\rm\ref{S3}}, let $i:X\into Y$, $j:Y^*=Y\setminus X\into Y$ be natural inclusions.
Then we have a canonical morphism
\begin{equation} \label{3.8}
\begin{aligned}\
&\q\,{\Hc}^{-1}i^*j_*(j^*\M\otimes f^*E_k^{\lambda})\\ &\simeq\Ker\bl(\psi_{f,1}(j^*\M\otimes f^*E_k^{\lambda})\buildrel{N}\over\to\psi_{f,1}(j^*\M\otimes f^*E_k^{\lambda}(-1))\br)\\ &\simeq\Ker\bl(\psi_f\M\boxtimes\psi_tE_k^{\lambda}\to\psi_f\M\boxtimes\psi_tE_k^{\lambda}\oplus\psi_f\M\boxtimes\psi_tE_k^{\lambda}(-1)\br)\\ &\to\psi_{f,\lambda^{-1}}\M,\\ 
\end{aligned}
\end{equation}
where the morphism in the third term is given by
\begin{equation*}
(T_s\boxtimes T_s -id)\oplus (N\boxtimes id + id\boxtimes N).
\end{equation*}
Moreover {\rm \eqref{3.8}} is an isomorphism for $k\gg 0$ such that $N^{k+1}=0$ on $\psi_{f,\lambda^{-1}}\M$ locally on $X$, the action of $T_s, N$ on $\psi_{f,\lambda^{-1}}\M$ correspond to $T_s^{-1}, -N$ on $f^*E_k^{\lambda}$, and {\rm \eqref{3.8}} is compatible with the natural isomorphism on the underlying complex of $A$-modules
\begin{equation*}
\psi_f K=i^*j_*\rho_*\rho^*j^*K\buildrel\sim\over\leftarrow i^*j_*(j^*K\otimes\rho_*A_{{\tilde X}^*})
\end{equation*}
by the inclusion $E_{k,A}^\lambda\into\rho_*A_{\tilde S^*}$ in Section~{\rm\ref{S3}}, where the base change of $\rho:\St^*\to S^*$ by $f$ is also denoted by $\rho:\Yt^*\to Y^*$.
\end{lem}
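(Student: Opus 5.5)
We will build \eqref{3.8} one step at a time, following its three lines, and then check the isomorphism and compatibility claims on the underlying $A$-complexes. There faithfulness of the forgetful functor is available.

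\emph{First isomorphism.} Set $\M'=j^*\M\otimes f^*E_k^{\lambda}$. Since $\M'$ lives on $Y^*$, we have $\varphi_f j_*\M'=\psi_{f,1}\M'(-1)$ and $\mathrm{var}\ssc\can=N$. The standard triangle $i^*j_*\to\psi_{f,1}\buildrel{\can}\over\to\varphi_{f,1}$ for the mixed Hodge module $j_*\M'$ (see \cite{mhm}) therefore gives ${\Hc}^{-1}i^*j_*\M'=\Ker\bl(N:\psi_{f,1}\M'\to\psi_{f,1}\M'(-1)\br)$. Here $N$ is the logarithm of the unipotent part of the monodromy, so on the unipotent part we have $\Ker(T-\id)=\Ker N$.

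\emph{Second isomorphism.} We need a K\"unneth-type formula $\psi_f(\M\otimes f^*E)\simeq\psi_f\M\boxtimes\psi_tE$ for the tensor product with the pull-back of a variation $E$ on $S^*$. Let $\Gamma_f:Y\to Y\times S$ be the graph embedding. Then $\M\otimes f^*E=\Gamma_f^*(\M\boxtimes E)$ up to shift, and we have $\psi_f\M=\psi_{f}$ on the product situation with $t$ fixed. Apply Proposition~\ref{P1.1} with $f_1=f$, $f_2=t$: this identifies the underlying complexes, with monodromy $T\boxtimes T$. The same identification holds at the level of $V$-filtrations, because $\Gr_V$ of an external product along $f-t$, restricted to the diagonal, is computed by the tensor product of the $V$-filtrations; $E$ is a nilpotent orbit by \eqref{3.5}, so its $V$-filtration is explicit. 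The result is $\psi_f\M'\simeq\psi_f\M\boxtimes\psi_tE_k^\lambda$ with $T_s\mapsto T_s\boxtimes T_s$ and $N\mapsto N\boxtimes\id+\id\boxtimes N$. Taking the generalized eigenvalue-$1$ part and the kernel of $N$ is then the same as taking the kernel of the displayed morphism $(T_s\boxtimes T_s-\id)\oplus(N\boxtimes\id+\id\boxtimes N)$.

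\emph{The last arrow.} $\psi_tE_k^\lambda$ is the fiber at $s=1$, which by \eqref{3.4}, \eqref{3.6} is $A[T]/(P(T))\otimes(\mopl_{i\les k}A(-i))$. Pair it against the element dual to $\tau^0$ in the weight-zero piece, combined with the trace $A[T]/(P)\to A$ on the semisimple part. This gives a morphism of mixed Hodge structures $\psi_tE_k^\lambda\to A$. Tensoring with $\psi_f\M$, restricting to the kernel, and projecting to the $\lambda^{-1}$-eigenpart gives the arrow. Only $\lambda^{-1}$ survives, since $T_s\boxtimes T_s=\id$ forces the eigenvalues on the $\psi_f\M$ side to be inverse to those of $E^\lambda$. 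Concretely, the kernel of $T_s\boxtimes T_s-\id$ on $W\otimes A[T]/(P)$ is identified with $W_{\lambda^{-1}}$ via $w\mapsto\sum$ over the Galois-twisted eigenvectors. Likewise, $\Ker(N\boxtimes1+1\boxtimes N)$ on $W\otimes\Ker N^{k+1}$ maps isomorphically to $W$ by extracting the $\tau^0$-coefficient once $N^{k+1}=0$ on $W$: the kernel consists of the elements $\sum_i(-N)^iw\otimes\tau^i/i!$. This proves the isomorphism for $k\gg0$, and shows that $T_s,N$ correspond to $T_s^{-1},-N$.

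\emph{Compatibility.} This is checked on $A$-complexes by writing $\psi_fK=i^*j_*(j^*K\otimes\rho_*A_{\Yt^*})$ and using the inclusion $E^\lambda_{k,A}\into\rho_*A_{\St^*}$ from \eqref{3.7}, under which $\tau^i/i!$ and $\alpha^{-j}$ give the explicit sections. The kernel description above is then the invariant part of $j^*K\otimes E$ under the monodromy, matching the sections of $j^*K\otimes\rho_*A$.

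\textbf{Main obstacle.} I expect the hard part to be the Hodge-module (rather than merely $A$-complex) version of the second isomorphism, i.e.\ controlling $F$ and $W$ on $\psi_f(\M\otimes f^*E)$. I would first try to reduce it to the explicit $V$-filtration of the nilpotent orbit $E_k^\lambda$ together with the relative monodromy filtration on the product.
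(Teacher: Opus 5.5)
Your architecture matches the paper's proof. The first isomorphism comes from $i_*i^*j_*=i_*C(N:\psi_{f,1}\to\psi_{f,1}(-1))$ \cite[2.24]{mhm}. The last arrow comes from the trace on $\psi_tE^\lambda$ and the projection $\psi_tE_k\to A$. The isomorphism for $k\gg0$ is checked on underlying objects. Your explicit kernel computation is correct: the sections $\sum_i(-N)^iw\otimes\tau^i/i!$, together with the eigenvector argument forcing the $\lambda^{-1}$-part. It is more self-contained than the paper, which refers to \cite{ext} for this point as well. The gap is the second isomorphism $\psi_f(j^*\M\otimes f^*E_k^\lambda)\simeq\psi_f\M\boxtimes\psi_tE_k^\lambda$ as an isomorphism of mixed Hodge modules, which is the real content of the lemma. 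You flagged this yourself, but what you wrote in its place does not establish it.

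\textbf{Why the step is not yet proved.} Proposition~\ref{P1.1}, with $f_2=t$ and $E$ extended to $S$ by $j_*$, gives only the isomorphism of underlying $A$-complexes. Since the forgetful functor is faithful, what remains is exactly that this isomorphism underlies a morphism of mixed Hodge modules, i.e.\ that it is compatible with the $\D$-module structure, with $F$ and with $W$.

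Your sentence about ``$\Gr_V$ of an external product along $f-t$, restricted to the diagonal'' is not the right statement. On $Y^*$ the restriction to the graph is non-characteristic, and $f-t$ plays no role. What is needed is the $V$-filtration along $t$ of $\widetilde M(*t)\otimes\mathcal E$, where $\widetilde M=i_{f+}M$ and $\mathcal E$ is the meromorphic extension of $E$ with its Deligne lattices. That filtration is $V^\alpha=\sum_\beta V^{\alpha-\beta}\widetilde M\otimes V^\beta\mathcal E$. Even granting this, it yields only the $\D$-module isomorphism, because $F$ and $W$ on $\psi_f$ are not determined by $V$ alone.

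\textbf{How the paper closes it.} It writes $E_k^\lambda=E^\lambda\otimes E_k$. The case of $E^\lambda$ is trivial: it has type $(0,0)$ and finite monodromy, so only the $V$-indices shift. For $E_k$ the paper cites \cite{ext}.

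\textbf{How you could close it along your lines.}
\begin{itemize}
\item For $F$: by \eqref{3.5}, the lattice of $E_k$ has an $\Oc$-frame adapted simultaneously to $F$ and to $V$. On the range of $V$ computing $\psi_f$ ($\alpha>0$ in the decreasing convention), the Hodge filtration of any mixed Hodge module satisfies $F_pV^\alpha=V^\alpha\cap j_*j^{-1}F_p$. Together these give the convolution formula for $F_p\Gr_V^\alpha$ of the tensor product.
\item For $W$: the weight filtration on $\psi_tE_k$ is split by \eqref{3.4}, and $N$ induces $0$ on its graded pieces. Hence the convolution of the weight filtrations of $\psi_f\M$ and $\psi_tE_k$ satisfies the defining properties of the relative monodromy filtration of $N\boxtimes\id+\id\boxtimes N$ with respect to $\psi_f$ of the weight filtration of $\M\otimes f^*E_k$. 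Uniqueness of the relative monodromy filtration then gives the weight statement.
\end{itemize}
Without an argument of this kind, your second isomorphism is established only for the underlying $A$-complexes and $\D$-modules.
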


\begin{proof}
The first isomorphism of \eqref{3.8} follows from
\begin{equation*}
i_*i^*j_*=C(j_!\to j_*)=i_*C(N:\psi_{f,1}\to\psi_{f,1}(-1))
\end{equation*}
in \cite[2.24]{mhm}.
For the second it is enough to show the isomorphism
\begin{equation*}
\psi_f(j^*\M\otimes f^*E_k^\lambda)=\psi_f\M\boxtimes\psi_tE_k^{\lambda}
\end{equation*}
compatible with the corresponding isomorphism on the underlying $A$-complexes (in particular, $T_s,N$ corresponds to $T_s\boxtimes T_s,\, N\boxtimes id +id\boxtimes N)$.
By definition of $E_k^{\lambda}$ (see Section~\ref{S3}) we may replace $E_k^{\lambda}$ by $E^{\lambda}$ or $E_k$.
The case of $E^{\lambda}$ is trivial, and the case of $E_k$ is proved in \cite{ext}.
The last morphism of \eqref{3.8} is obtained by the product of the canonical projections
\begin{equation*}
\begin{aligned}\
&\psi_f\M\to\psi_{f,\lambda^{-1}}\M,\q\hbox{see (3.3.l)},\\ &\psi_tE^{\lambda}=A[T]/(P(T))\buildrel{\rm Tr}\over\longrightarrow A\,\,\hbox{(defined by $Q(T)\mapsto\sum_{\alpha\in\lambda}Q(\alpha))$},\\ &\psi_t E_k=\mopl_{0\les i\les k}\,A(-i)\to A.\\ 
\end{aligned}
\end{equation*}
Then for the proof of the isomorphism, it is enough to show it for the underlying $\C$-complex, and we may forget $E^{\lambda}$ essentially, i.e.
replace $E_k^{\lambda}$ by $E_k$.
Then the assertion is proved in \cite{ext}.
\end{proof}

\begin{prop} \label{P3.3}
In the notation of Proposition~{\rm \ref{P1.3}}, we have a canonical isomorphism for $\M_a\in D^b\MHM(Y_a,A):$
\begin{equation} \label{3.9}
\Sp_{X_1}\M_1\boxtimes\Sp_{X_2}\M_2\simto\Sp_X(\M_1\boxtimes\M_2)
\end{equation}
compatible with {\rm \eqref{1.5}}.
\end{prop}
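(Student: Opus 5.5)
The plan is to construct the morphism \eqref{3.9} at the level of mixed Hodge modules by mimicking the proof of Propositions~\ref{P1.1} and \ref{P1.3}, and then to reduce the isomorphism statement to \eqref{1.5} via the faithful exact forgetful functor. Since $\Sp_X$ is exact on $\MHM$ and commutes with the forgetful functor, and external products are exact, we may reduce to $\M_a\in\MHM(Y_a,A)$ by the usual d\'evissage on bounded complexes. Then $\Sp_X\M=\psi_p\Hc^1q^*\M$ with $p:D_XY\to S$. As in the proof of Proposition~\ref{P1.3}, we use $D_XY=D_{X_1}Y_1{\times}_SD_{X_2}Y_2$ with $\delta':D_XY\into D_{X_1}Y_1{\times}D_{X_2}Y_2$, together with the compatibility $q=(q_1{\times}q_2)\ssc\delta'$. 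This gives
\begin{equation*}
\Hc^1q^*(\M_1\boxtimes\M_2)=\Hc^{1}\delta'^*\bl(\Hc^1q_1^*\M_1\boxtimes\Hc^1q_2^*\M_2\br)
\end{equation*}
up to the appropriate shift. So it suffices to construct a Hodge-module version of \eqref{1.2}:
\begin{equation*}
\psi_{p_1}\Nc_1\boxtimes\psi_{p_2}\Nc_2\to\psi_p\delta'^*(\Nc_1\boxtimes\Nc_2),
\end{equation*}
for $\Nc_a=\Hc^1q_a^*\M_a$, which are smooth over $S^*$ in the relevant sense (here $\Nc_a$ is notation for the proof only).

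To construct this morphism inside $\MHM$, I would use Lemma~\ref{L3.2}, which realizes the $\lambda$-parts of nearby cycles as $\Hc^{-1}i^*j_*(j^*\M\otimes f^*E_k^{\lambda})$ for $k\gg0$, compatibly with the $A$-level description $\psi_fK=i^*j_*(j^*K\otimes\rho_*A)$. On the product, the multiplication $E^{\lambda_1}_{k}\otimes E^{\lambda_2}_{k}\to\rho_*A_{\St^*}$, using the algebra structure of $\rho_*A_{\St^*}$ (\eqref{3.7}), decomposes into pieces $E^{\lambda}_{2k}$. This yields morphisms of mixed Hodge modules
\begin{equation*}
\Hc^{-1}i_1^*j_{1*}(\cdots\otimes p_1^*E^{\lambda_1}_k)\boxtimes\Hc^{-1}i_2^*j_{2*}(\cdots\otimes p_2^*E^{\lambda_2}_k)\to\Hc^{-1}i^*j_*\bl(\delta'^*(\cdots)\otimes p^*E^{\lambda}_{2k}\br).
\end{equation*}
These are obtained from the adjunction $\id\to\delta'_*\delta'^*$ and the natural morphism $j_{1*}\boxtimes j_{2*}\to(j_1{\times}j_2)_*$, exactly as in \eqref{1.4}, since all these functors exist in $D^b\MHM$. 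Summing over $\lambda_1,\lambda_2$ via Lemma~\ref{L3.1} then gives \eqref{3.9}. By the last assertion of Lemma~\ref{L3.2}, its underlying morphism of $A$-complexes is the one constructed in \eqref{1.4}, and hence it is \eqref{1.5}.

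Finally, \eqref{1.5} is an isomorphism, and a morphism in $\MHM$ whose underlying $A$-perverse morphism is an isomorphism is itself an isomorphism, because the forgetful functor is faithful and exact and $\MHM$ is abelian. Hence \eqref{3.9} is an isomorphism.

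The main obstacle is checking that the multiplication on the $E^\lambda_k$ is a morphism of variations of mixed Hodge structure compatible with the standard normalizations \eqref{3.4}--\eqref{3.6}. Equivalently, one has to show that the glued morphism is independent of $k$ and of the choice of $G$, so that it really matches \eqref{1.4} on underlying complexes. For this I would compute on $s=1$ using the basis $\tau^i/i!$: the product $\tau^i/i!\cdot\tau^{i'}/i'!$ is a binomial multiple of $\tau^{i+i'}/(i+i')!$, which respects $W$ and $F$ by \eqref{3.5}.
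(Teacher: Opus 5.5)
Your proposal is correct and follows essentially the same route as the paper: reduce to $\M_a\in\MHM(Y_a,A)$ by exactness, build the morphism as in the proofs of Propositions~\ref{P1.1} and~\ref{P1.3} using the multiplication $E^{\lambda}_k\otimes E^{\lambda'}_{k'}\to\bigoplus_{\lambda''\subset\lambda\lambda'}E^{\lambda''}_{k+k'}$, which is compatible with $\rho_*A_{\St^*}\otimes\rho_*A_{\St^*}\to\rho_*A_{\St^*}$, and deduce that it is an isomorphism from \eqref{1.5} via the exact faithful forgetful functor. There are only two minor slips: by Lemma~\ref{L3.2} the twist by $E^\lambda_k$ computes the $\lambda^{-1}$-part of $\psi$ rather than the $\lambda$-part, and the restriction along the codimension-one embedding $\delta'$ should be $\Hc^{-1}\delta'^*$, not $\Hc^{1}\delta'^*$.
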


\begin{proof}
We may assume $\M_a\in\MHM(Y_a,A)$, since $\Sp$ and $\boxtimes$ are exact functors.
By the exactness and faithfulness of the forgetful functor, it is enough to construct a morphism \eqref{3.9} compatible with \eqref{1.5}.
We have the multiplications
\begin{equation*}
\delta^*(E_k^{\lambda}\boxtimes E_{k'}^{\lambda'}) =E_k^{\lambda}\otimes E_{k'}^{\lambda'}\to\mopl_{\lambda''\subset\lambda\lambda'}\, E_{k+k'}^{\lambda''}
\end{equation*}
compatible with $\rho_*A_{\St}\otimes\rho_*A_{\St^*}\to\rho_*A_{\St^*}$ by the inclusion $E_{k,A}^{\lambda}\into\rho_*A_{\St^*}$.
This induces the desired morphism by the argument in the proof of Proposition~\ref{P1.3}.
\end{proof}

\begin{prop} \label{P3.4}
With the notation of Subsection~{\rm\ref{S1.1}}, let $\M^{\ssb}\in D^b\MHM(Y_2,A)$ such that conditions {\rm \eqref{1.7}} and {\rm \eqref{1.8}} with $K$ replaced by the direct sum of the underlying $A$-complex of $\M^j$ for $j$. Assume there is a principal divisor $X_0'\subset X$ such that
\begin{equation*}
\mcup_j\pi({\rm supp}\,\M^j)\cap X'_0=X_0\,(:=\mcup_j{\rm supp}\,\M^j\cap X),
\end{equation*}
where $X$ is identified with the zero sections of $Y_1,Y_2$. Then we have a canonical isomorphism
\begin{equation} \label{3.10}
i_*i^*\Sp_X\pi_*\M^{\ssb}\to\pi_*\Sp_X\M^{\ssb}\q\h{in}\,\,\,D^b\MHM(C_XY_1,A)
\end{equation}
compatible with \eqref{1.10} in Proposition~{\rm\ref{P1.4}}, where $i:C_XY_1{\times}_XX'_0\into C_XY_1$.
\end{prop}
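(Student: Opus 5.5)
The plan is to build the morphism \eqref{3.10} directly in $D^b\MHM(C_XY_1,A)$ from the same geometry used in the proof of Proposition~\ref{P1.4}. Then I will check that it is an isomorphism after applying the forgetful functor, which is faithful and commutes with $\Sp_X$, $\pi_*$, $i^*$ and $i_*$. The role of the principal divisor $X'_0$ is to make $i_*i^*$ a well-defined functor in $D^b\MHM$, since $i$ is the inclusion of the preimage of a principal divisor. It also lets us restrict attention to the locus where the underlying $A$-complexes are controlled by \eqref{1.7} and \eqref{1.8}. Since $\Sp_X$ is exact, $\pi_*$ is triangulated, and $\M^{\ssb}$ is bounded, the construction can be done functorially in $\M^{\ssb}$; by a standard filtration argument, to check the isomorphism it suffices to treat a single $\M\in\MHM(Y_2,A)$.

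The first step is the morphism itself. As in Proposition~\ref{P1.4}, $\pi$ extends to $\pi:D_XY_2\to D_XY_1$, compatibly with $p_1,p_2$ and with the base change $\tilde\pi$ to $\St^*$. Using $\Sp_X=\psi_p\Hc^1q^*$ from \eqref{3.2} and the MHM base change $\tilde j_1^*\ssc\pi_*=\tilde\pi_*\ssc\tilde j_2^*$, I will use the canonical morphism of nearby cycle functors $\psi_{p_1}\pi_*\to\pi_*\psi_{p_2}$ for mixed Hodge modules (from \cite{mhm}). Concretely, the unipotent and $\lambda$-parts can be written via Lemma~\ref{L3.2} as ${\Hc}^{-1}i^*j_*(\,\cdot\otimes p^*E^\lambda_k)$. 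This description commutes with $\pi_*$ through the adjunction $i_1^*\pi_*\to\pi_*i_2^*$, and it is compatible with the underlying $A$-complexes by the last assertion of Lemma~\ref{L3.2}. This gives a morphism $\Sp_X\pi_*\M\to\pi_*\Sp_X\M$ whose underlying morphism is \eqref{1.10}.

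Next I apply $i_*i^*$ to this morphism. The target is supported over $X_0\subset X'_0$ by the hypothesis $\mcup_j\pi(\supp\M^j)\cap X'_0=X_0$, so $\pi_*\Sp_X\M=i_*i^*\pi_*\Sp_X\M$ and the morphism factors as in \eqref{3.10}. To show it is an isomorphism, it is enough to do so on underlying $A$-complexes, and Proposition~\ref{P1.4} gives this over $X_0^{(1)}$. Outside $X_0^{(1)}$ but inside $C_XY_1{\times}_XX'_0$ both sides vanish, since the supports lie over $X_0$.

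The main obstacle will be the last point together with the correct treatment of the boundary. The left-hand side $\Sp_X\pi_*\M$ may have support over points of $X$ outside $X_0$: the support of $\pi_*\M$ meets the zero section in a possibly larger set than $X_0$. This is exactly why the statement uses $i^*$ rather than the full object, and I must verify that restricting to $X'_0$ kills this extra part. To handle it I will use the hypothesis on $X'_0$, namely that $\pi(\supp\M)\cap X'_0=X_0$, and invoke \eqref{1.7} to see that over $X'_0$ the only contributions come from $X_0$. A second delicate point is the identification of the Hodge-theoretic morphism with \eqref{1.10}, including the choice of lift $\R_+\to\St^*$. Because of this, I will construct the morphism at the level of $\rho_*A_{\St^*}$ using the inclusions $E^\lambda_{k,A}\into\rho_*A_{\St^*}$ and the compatibilities in Lemma~\ref{L3.2}.
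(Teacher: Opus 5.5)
\textbf{Verification step: same as the paper.} Your second half matches the paper's proof. You use faithfulness of the forgetful functor to reduce to the underlying $A$-complexes. There, Proposition~\ref{P1.4}, the hypothesis on $X'_0$ and \eqref{1.7} give the isomorphism and the vanishing away from $X_0$.

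\textbf{The gap: constructing \eqref{3.10}.} The gap is in the first half, the construction of \eqref{3.10} in $D^b\MHM(C_XY_1,A)$, which is the actual content of the proposition. There is no ready-made morphism $\psi_{p_1}\pi_*\to\pi_*\psi_{p_2}$ of mixed Hodge modules for the non-proper morphism $\pi:D_XY_2\to D_XY_1$, whose fibers are $\C$. The sheaf-level morphism \eqref{1.10} is built from $\tilde j_*$ along the universal covering $\St^*$ and from $i_1^*\pi_*\to\pi_*i_2^*$. The identity $\tilde j_1^*\ssc\pi_*=\tilde\pi_*\ssc\tilde j_2^*$ you invoke has no meaning for mixed Hodge modules.

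Worse, on analytic spaces $\pi_*$ is not directly available in $D^b\MHM$ without a partial compactification over which the support becomes proper. The closure of $\supp K{\times}S^*$ in $D_XY_2$ is in general not proper over $D_XY_1$ near $p_2^{-1}(0)$, since points escape with $s_2/s\to\infty$. This is why the paper works on $(Y_2{\times}S)''$, where the extra component $E_2$ appears.

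Your fallback via Lemma~\ref{L3.2} does not repair this. That lemma identifies $\psi_{f,\lambda^{-1}}\M$ with $\Hc^{-1}i^*j_*(j^*\M\otimes f^*E^\lambda_k)$ for a single mixed Hodge module. But $i_2^*j_{2*}(\cdots)$ also has nonzero $\Hc^0$, and $\pi_*$ (smooth of relative dimension one, not proper) is not t-exact. So applying $\Hc^{-1}$ to $i_1^*\pi_*\to\pi_*i_2^*$ gives at most a morphism of perverse cohomology objects, not a morphism $\Sp_X\pi_*\M\to\pi_*\Sp_X\M$ in the derived category. Nor would a morphism built on single objects extend automatically to $\M^{\ssb}\in D^b\MHM(Y_2,A)$.

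\textbf{What the paper does instead.} The missing idea is to realize both sides as honest complexes of mixed Hodge modules, so that exact functors can be applied termwise.
\begin{enumerate}
\item The paper uses $\pi'':\pi'^{-1}(D_XY_1)\to D_XY_1$, where $\pi'^{-1}(D_XY_1)$ is the blow-up of $D_XY_1{\times}\C$ along $p_1^{-1}(0){\times}\{0\}$.
\item It defines the direct image of $\Hc^1q''^*_2\M^j$ by the functorial $\pi''_*$-acyclic resolution \eqref{3.11}. This resolution comes from two divisors $H_1,H_2$ that are transversal to the module and have complements affine over $D_XY_1$.
\item It realizes $i^*$ as $C(\psi_{g,1}\to\varphi_{g,1})$, with $g$ a defining function of $X'_0$. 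This is how the principal divisor enters the construction, not merely to make $i^*$ well defined.
\item Since $\psi_{g,1},\varphi_{g,1},\psi_{p''_2}$ are exact and commute with $\Hc^{\ssb}\pi''_*$, applying them to \eqref{3.11} again gives $\pi''_*$-acyclic resolutions. So the left-hand side of \eqref{3.10} is computed termwise.
\item The right-hand side is realized as in Remark~\ref{R3.5}, through a resolution of $j_*\Sp_X\M^{\ssb}$.
\item The comparison comes from $\psi_{p''_2}\to j_*j^*\psi_{p''_2}$, whose underlying morphism is the one in \eqref{1.11}.
\end{enumerate}
Without such a construction, your argument only reproves Proposition~\ref{P1.4} for the underlying $A$-complexes.
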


\begin{proof}
The proof of Proposition~\ref{P1.4} holds for the underlying $A$-complex of $\M^j$ with $X_0$ replaced by $X_0'$.
Then it is enough to construct a morphism \eqref{3.10} compatible with \eqref{1.10}.
The restriction $i^*$ is defined by $C(\psi_{g,1}\to\varphi_{g,1})$ for a defining function $g$ of $X_0'$. (Note that $g$ is given by $f_1{-}f_2$ in the application, since $s_2=t_1{-}t_2$.)
With the notation of the proof of Proposition~\ref{P1.4}, it is enough to define the direct image of $({\Hc}^1 q''^*_2)\M^j|_{\pi^{'-1}(D_{X_1}Y_1)}$ in $C^b\MHM(D_{X_1}Y_1,A)$ by the morphism $\pi':\pi'^{-1}(D_{X_1}Y_1)\to D_{X_1}Y_1$.
By definition $\pi'^{-1}(D_{X_1}Y_1)$ is isomorphic to the blow up of $D_{X_1}Y_1{\times}\C$ along $p_1^{-1}(0){\times}\{0\}$.
Indeed, using the coordinates $(s_1,s_2)$ of $\C^2$ as in Subsection~\ref{S1.1} and $s$ of $S,\>\pi'^{-1}(D_{X_1}Y_1)$ is covered by two affine opens over $X$ with relative coordinate systems $(s,\tfrac{s_1}{s},\tfrac{s_2}{s})$ and $(s_2,\tfrac{s_1}{s},\tfrac{s}{s_2})$, and $D_{X_1}Y_1$ is affine over $X$ with relative coordinate system $(s,\tfrac{s_1}{s})$.
Let $H_k$ be the divisor of $\pi'^{-1}(D_{X_1}Y_1)$ defined by $\tfrac{s_2}{s}=c_k$ for $c_k\in\C^*\> (k=1,2)$, and $U_k=\pi'^{-1}(D_{X_1}Y_1)\setminus H_k$.
Then $U_k$ are affine over $X$ and $D_{X_1}Y_1$, and the $H_k$ are transversal to $({\Hc}^1\tilde q''^*_2)\M^j$.
Indeed, the underlying $A$-complex of $({\Hc}^1\tilde q''^*_2)\M^j$ is locally constant along each orbit of the $\C^*$-action on $D_{X_2}Y_2$, which is defined by
\begin{equation*}
(s,\tfrac{s_1}{s},\tfrac{s_2}{s})\mapsto(\alpha s,\tfrac{1}{\alpha}\tfrac{s_1}{s},\tfrac{1}{\alpha}\tfrac{s_2}{s})\,\,\,\,\hbox{for}\,\,\,\alpha\in\C^*,
\end{equation*}
and is transversal to $H_k$.
Let $i_k:H_k\into\pi'^{-1}(D_{X_1}Y_1),\,j_k:U_k\into\pi'^{-1}(D_{X_1}Y_1)$ denote the inclusion morphisms.
Then the direct image of $({\Hc}^1q''^*)\M^j$ by $\pi''=\pi'|_{\pi'^{-1}(Z_1)}$ is defined by the functorial $\pi_*''$-acyclic resolution
\begin{equation} \label{3.11}
(i_1)_*(\Hc^{-1}i_1^*)({\Hc}^1q''^*)\M^j\to (j_1)_!j_1^*j_{2*}j_2^*({\Hc}^1q''^*)\M^j\to (i_2)_*({\Hc}^1i_2^!)({\Hc}^1q''^*)\M^j
\end{equation}
of $({\Hc}^1q''^*)\M^j$ as in \cite{B}.
By the commutativity of $\psi_{g,1},\varphi_{g,1},\psi_{p_2''}$ with ${\Hc}^{\ssb}\pi''_*$, we see that $\psi_{g,1}\psi_{p_2''}$ or $\varphi_{g,1}\psi_{p_2''}$ of \eqref{3.11} gives a $\pi''_*$-acyclic resolution of $\psi_{g,1}$ or $\varphi_{g,1}$ of $\psi_{p_2''}({\Hc}^1q''^*)\M^j$.
So we get the assertion.
\end{proof}

\begin{rem} \label{R3.5}
We can replace $j_*j^*\psi_{p''_2}$ of \eqref{3.11} for $({\Hc}^1q''^*)\M^{\ssb}$ by the $\pi''_*$-acyclic resolution
\begin{equation*}
(i_1)_*({\Hc}^{-1}i_1^*)\to (j_1)_!j_1^*
\end{equation*}
of $j_*j^*\psi_{p''_2}({\Hc}^1q''_2*)\M^{\ssb}=j_*\Sp_X\M^{\ssb}$, and the right-hand side of \eqref{3.10} can be defined in this way.
\end{rem}

\begin{thm} \label{T3.6}
With the notation of Theorem~{\rm \ref{T1.5}}, let $\M_a\in D^b\MHM(Y_a,A)$, and $Y'_a\subset Y_a$ the graph of a holomorphic function $f_a$ on $X_a$, i.e.
$Y'_a=\{f_a=t_a\}$.
If $\supp\,\M_a\subset Y'_a$, we have a canonical isomorphism in $D^b\MHM(Y,A):$
\begin{equation*}
i_*i^*\Sp_X\pi_*(\M_1\boxtimes\M_2)\simto\pi_*(\Sp_{X_1}\M_1\boxtimes\Sp_{X_2}\M_2)
\end{equation*}
compatible with {\rm \eqref{1.15}} in Theorem~{\rm\ref{T1.5}}, where $i$ denotes the inclusion $\{f_1\eq f_2\}{\times}S\into Y$.
\end{thm}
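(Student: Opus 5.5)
The plan is to mirror the proof of Theorem~\ref{T1.5} at the level of mixed Hodge modules. The isomorphism will be the composition of the Hodge-theoretic versions of Proposition~\ref{P1.3} and Proposition~\ref{P1.4}, namely Propositions~\ref{P3.3} and \ref{P3.4}.

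First, we reduce to the case $\M_a\in\MHM(Y_a,A)$, or at least to bounded complexes whose terms are supported in $Y'_a$. This is legitimate because $\Sp$, $\boxtimes$, $i^*$ and $\pi_*$ are all defined on $D^b\MHM$. The same exactness and faithfulness argument used in Proposition~\ref{P3.3} then shows that it is enough to construct a morphism in $D^b\MHM(Y,A)$ whose underlying morphism of $A$-complexes is \eqref{1.15}; it will automatically be an isomorphism once we know it restricts to the isomorphism \eqref{1.15} on the underlying complexes.

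Next, we apply Proposition~\ref{P3.3} to $\M_1,\M_2$, which gives $\Sp_{X_1}\M_1\boxtimes\Sp_{X_2}\M_2\simeq\Sp_X(\M_1\boxtimes\M_2)$ compatibly with \eqref{1.5}. Applying $\pi_*$ reduces the claim to an isomorphism
\[
i_*i^*\Sp_X\pi_*(\M_1\boxtimes\M_2)\simto\pi_*\Sp_X(\M_1\boxtimes\M_2).
\]
This is exactly the situation of Proposition~\ref{P3.4}, after the linear change of coordinates $s_1=t_1{+}t_2$, $s_2=t_1{-}t_2$ on $S{\times}S$. This change turns $\pi$ into the projection $(x,s_1,s_2)\mapsto(x,s_1)$ of Subsection~\ref{S1.1}.

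We then verify the hypotheses of Proposition~\ref{P3.4} for $\M^{\ssb}=\M_1\boxtimes\M_2$. For condition \eqref{1.7}: $\supp(\M_1\boxtimes\M_2)\subset Y'_1{\times}Y'_2$, which is the graph of $(f_1,f_2)$, so it maps injectively to $X$. For condition \eqref{1.8}: this holds by Lemma~\ref{L1.2} applied to $f_a$ (equivalently to the projections $Y_a\to S$), taking $B_{x,j}$ to be products of Milnor balls and $\gamma=1$, as remarked after \eqref{1.8}. For the principal divisor: take $X'_0=\{f_1=f_2\}$ with defining function $g=f_1-f_2$. Indeed, $\pi(\supp)$ consists of the points $(x_1,x_2,f_1(x_1)+f_2(x_2))$, and its intersection with $X'_0$ (identified with the zero section of $Y_1$) requires $f_1+f_2=0$ and $f_1=f_2$, hence $f_1=f_2=0$. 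So the intersection is $X_0=\supp\cap X$, as required. Proposition~\ref{P3.4} then gives the desired isomorphism compatibly with \eqref{1.10}, and the $i$ there is $C_XY_1{\times}_XX'_0=\{f_1=f_2\}{\times}S$.

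Finally, we check that the composite is compatible with \eqref{1.15}. This follows because \eqref{1.15} was itself constructed as the composite of \eqref{1.5} and \eqref{1.10} in the proof of Theorem~\ref{T1.5}.

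The main obstacle is the bookkeeping between $i_*i^*$ and restriction to $C$. Theorem~\ref{T1.5} only asserts an isomorphism after restricting to $C=X'_1{\times}X'_2{\times}S$, whereas here $i_*i^*$ is taken over the larger set $\{f_1=f_2\}{\times}S$. One must check that both sides vanish on $(\{f_1=f_2\}\setminus X'_1{\times}X'_2){\times}S$. For the right-hand side this follows since the support of $\Sp_{X_a}\M_a$ lies over $X'_a$. For the left-hand side I would argue by $\C^*$-monodromicity of $\Sp$ together with the support computation above. This is where care with the definition of $i^*$ via $C(\psi_{g,1}\to\varphi_{g,1})$ is needed.
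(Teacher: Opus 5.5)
Your proposal is correct and is essentially the paper's proof: there Theorem~\ref{T3.6} is obtained as a corollary of Proposition~\ref{P3.3} and Proposition~\ref{P3.4}, with $i_*i^*$ defined by $C(\psi_{g,1}\to\varphi_{g,1})$ for $g=f_1-f_2$ because $s_2=t_1-t_2$, exactly as you set it up. You also check \eqref{1.7}, \eqref{1.8} and the condition $\bigcup_j\pi(\supp\M^j)\cap X'_0=X_0$ explicitly, which the paper leaves implicit; that support computation already settles the vanishing outside $C$ that you flag at the end, so no separate monodromicity argument is needed there.
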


\begin{proof}
This is a corollary of Proposition~\ref{P3.3} and Proposition~\ref{P3.4}. Note that $i_*i^*$ in the left-hand side of the isomorphism can be defined by $C(\psi_{g,1}\to\varphi_{g,1})$ with $g=f_1{-}f_2$, since $s_2=t_1{-}t_2$.
\end{proof}

\section{Monodromical mixed Hodge modules} \label{S4}
Let $\pi:Y\to X$ be a line bundle on a complex manifold, and $\theta$ the Euler vector field on $Y$ corresponding to the natural $\C^*$-action.
Let $M$ be a regular holonomic $\D_Y$-module, and $V$ Kashiwara's filtration along $X$ the zero section of $Y$ as in Section~\ref{S3}.
We define $[M]=\mopl_{\alpha\in\C}\,(\pi_*M)^\alpha$ with
\begin{equation*}
(\pi_*M)^\alpha=\mcup_k\,{\rm Ker}((\theta-\alpha)^k:\pi_*M\to\pi_*M).
\end{equation*}
We define $[\D_Y]=\mopl_{i\in\Z}\,(\pi_*\D_Y)^i$ and $[\Oc_Y]=\mopl_{i\in\N}\,(\pi_*\Oc_Y)^i$ in the same way, where the action of $\theta$ is given by $\theta(P)=[\theta, P]$ for $P\in\D_Y$.
Then we have a natural isomorphism as graded rings:$[\D_Y]\simto\Gr_V\D_Y$, and $[M]$ is a graded $[\D_Y]$-module.
Put $Y^*=Y\setminus X$.
We say that $M$ is {\it monodromical}, if the following equivalent conditions are satisfied:
\begin{equation} \label{4.1}
\h{$\DR(M)$ is monodromical, see Section~\ref{S2},}
\end{equation}
\begin{equation} \label{4.2}
\h{$M|_{Y^*}$ is generated over $\Oc_{Y^*}$ by ${\rm Ker}(\theta:M|_{Y^*}\to M|_{Y^*})$,}
\end{equation}
\begin{equation} \label{4.3}
\h{$M$ is generated over $\Oc_Y$ by $\pi^{-1}[M]$,}
\end{equation}
\begin{equation} \label{4.4}
\begin{aligned}
\h{$M\buildrel\sim\over\leftarrow \D_Y\motim_{\pi^{-1}[\D_Y]}\pi^{-1}[M] \buildrel\sim\over\leftarrow \Oc_Y\motim_{\pi^{-1}[\Oc_Y]}\pi^{-1}[M]$,}
\end{aligned}
\end{equation}
\begin{equation} \label{4.5}
\begin{aligned}
&\h{$M\simeq \D_Y\motim_{\pi^{-1}[\D_Y]}\pi^{-1}\Mo\simeq \Oc_Y\motim_{\pi^{-1}[\Oc_Y]}\pi^{-1}\Mo$ for a coherent graded $[\D_Y]$-module}\\&\h{$\Mo=\mopl_{\alpha\in\C}\,\Mo^\alpha$ such that $\theta-\alpha$ is nilpotent on $\Mo^\alpha$ locally on $X$,}
\end{aligned}
\end{equation}
\begin{equation} \label{4.6}
\h{$M\simeq\Sp_XM$.}
\end{equation}
\ms
Here \eqref{4.4} $\Rightarrow$ \eqref{4.3} $\Rightarrow$ \eqref{4.2} $\Rightarrow$ \eqref{4.1} is easy using the projection $Y\setminus X\to X$ locally on $Y\setminus X$ and also the Riemann-Hilbert correspondence. We employ the latter for \eqref{4.1} $\Leftrightarrow$ \eqref{4.6}, and \eqref{3.1} for \eqref{4.6} $\Rightarrow$ \eqref{4.5}.
The remaining \eqref{4.5} $\Rightarrow$ \eqref{4.4} is verified by showing
\begin{equation} \label{4.7}
\Mo\simto[M]\simto\Gr_VM
\end{equation}
\nin
under the assumption \eqref{4.5}, where $\Gr_VM$ is naturally $\C$-graded by the action of $\theta$.
\sk
Let $\hat\pi:\Yh\to X$ be the natural projective compactification of $\pi$.
We say that a mixed Hodge module $\M\in\MHM(Y,A)$ is {\it monodromical}, if we have an isomorphism
\begin{equation} \label{4.8}
\M\simeq\Sp_X\M\q {\rm in}\q\MHM(Y,A)
\end{equation}
where $X$ (resp. $Y$) is naturally identified with the zero section of $\pi$ (resp. $C_XY$).
This condition implies
\begin{equation} \label{4.9}
\h{$\M$ is extendable over $\Yh$,}
\end{equation}
\ms\nin
by the blow-up construction, see Section~\ref{S1}.
\sk
Let $\MHM(Y,A)_{\rm mon}$ be the full subcategory of monodromical mixed Hodge modules.
Let $\MHM(Y,A)_{{\rm mon},*}$ (resp. $\MHM(Y,A)_{{\rm mon},!}$) be its full subcategory defined by the condition
\begin{equation*}
\M\simto j_*j^*\M\q\hbox{(resp. $j_!j^*\M\simto\M$)}
\end{equation*}
where $j:Y^*=Y\setminus X\to Y$.
Let $\MHM(X,A;T_s,N)$ be the category of Mixed Hodge modules on $X$ with commuting actions of $T_s,N$ such that $T_s:\M\to\M$ has finite order and $N:\M\to\M(-1)$ is nilpotent (locally on $X$).
If $Y$ is a trivial line bundle $X{\times}S$ with projection $p:Y\to S$, we have the vanishing cycle functors
\begin{equation} \label{4.10}
\psi_p,\varphi_p:\MHM(Y,A)\to\MHM(X,A;T_s,N)
\end{equation}
where $\varphi_p=\varphi_{p,1}\oplus\psi_{p,\neq 1}$ i.e.
$\psi_{p,\neq 1}=\varphi_{p,\neq 1}$, see \cite{mhm}.

\begin{lem} \label{L4.1}
Let $\pi:Y\to X$ be as in Section~{\rm\ref{S4}}.
For a filtered $\D$-module $(M,F)$, set
\begin{equation*}
\begin{aligned}
F_p[M]&:=\mopl_{\alpha\in\Q}\,F_p(\pi_*M)^\alpha\q\h{with}\q F_p(\pi_*M)^\alpha:=\pi_*F^pM\cap (\pi_*M)^\alpha,\\ F_p\Gr_VM&:=\mopl_{\alpha\in\Q}\,F_p\Gr_V^\alpha M.
\end{aligned}
\end{equation*}
Then, for $\M=((M,F),K;W)\in\MHM(Y,A)$, $\M$ is monodromical, if and only if the following equivalent conditions are satisfied
\begin{equation} \label{4.11}
\h{\it $F_pM$ are generated over $\Oc_Y$ by $\pi^{-1}F_p[M]$,}
\end{equation}
\begin{equation} \label{4.12}
\h{\it $F_pM\buildrel\sim\over\leftarrow \Oc_Y\motim_{\pi^{-1}[\Oc_Y]}\pi^{-1}F_p[M]$,}
\end{equation}
\begin{equation} \label{4.13}
\h{\it $F_p[M]\simto F_p\Gr_VM$ and $K$ is monodromical, see Section~{\rm\ref{S2}}.}
\end{equation}
\end{lem}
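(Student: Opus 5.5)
We prove Lemma~\ref{L4.1} by comparing each condition with the characterization \eqref{4.3}--\eqref{4.7} of monodromical $\D$-modules, and by using that the Hodge filtration of $\Sp_X\M=\psi_p\Hc^1q^*\M$ is the one induced from $F$ on $\Gr_VM$. Recall from \eqref{3.1} that
\begin{equation*}
\Sp_XM=\Oc_Y\motim_{\pi^{-1}[\Oc_Y]}\pi^{-1}\Gr_VM .
\end{equation*}
By \eqref{3.3} and the definition of $F$ on nearby cycles in \cite{mhp}, the Hodge filtration of $\Sp_X\M$ is
\begin{equation*}
F_p\Sp_XM=\Oc_Y\motim_{\pi^{-1}[\Oc_Y]}\pi^{-1}F_p\Gr_VM .
\end{equation*}
This uses the compatibility of $F$ with $V$ along the divisor $p^{-1}(0)$ in $D_XY$: strict specializability of $F$ with respect to $V$ gives the formula for $F$ on $\Gr_V$.

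\emph{Monodromical implies \eqref{4.13}.} Assume $\M\simeq\Sp_X\M$. Then $K$ is monodromical by Lemma~\ref{L2.1} applied via \eqref{2.5}. On the $\D$-module side, \eqref{4.7} gives $[M]\simto\Gr_VM$, and this map is compatible with the $\theta$-grading. Since $\theta$ acts on $F_p\Gr_VM$ preserving the decomposition by generalized eigenvalues, the isomorphism $M\simeq\Sp_XM$ of filtered modules shows that $F_p[M]$, defined as the eigenpart of $F_pM$, maps onto $F_p\Gr_VM$. Here one uses that $F$ on $\Sp_XM$ is compatible with the $\C^*$-grading, i.e.\ $F_p$ is the direct sum of its intersections with the eigenspaces. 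Injectivity of $F_p[M]\to F_p\Gr_VM$ is automatic, because $[M]\to\Gr_VM$ is already injective.

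\emph{Equivalence of \eqref{4.11}, \eqref{4.12} and \eqref{4.13}.} \eqref{4.12} implies \eqref{4.11} trivially. Next, \eqref{4.13} implies \eqref{4.12}: with $K$ monodromical, $M$ satisfies \eqref{4.4}, so $M\simeq\Sp_XM$ as $\D$-modules. Transporting the Hodge filtration via $F_p[M]\simto F_p\Gr_VM$ and the displayed formula for $F_p\Sp_XM$, the natural map in \eqref{4.12} becomes the identity $F_p\Sp_XM=F_p\Sp_XM$. Flatness of $\Oc_Y$ over $\pi^{-1}[\Oc_Y]$ on graded pieces gives injectivity. Finally, \eqref{4.11} implies \eqref{4.13}. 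First, $K$ is monodromical: $F_pM$ exhausts $M$, so $M$ is generated by $\pi^{-1}[M]$, which is \eqref{4.3}, and this implies \eqref{4.1}. It remains to show $F_p[M]\to F_p\Gr_VM$ is surjective. I expect this to be the \textbf{main obstacle}, since one must show that $F_pM\cap V^\alpha M$ is generated by homogeneous elements. For this I would use that $V^\alpha M=\Oc_Y\cdot\pi^{-1}\bigl(\mopl_{\beta\ges\alpha}[M]^\beta\bigr)$ (from \eqref{4.7}), together with strictness of the filtered map $\Oc_Y\otimes\pi^{-1}F_p[M]\to F_pM$. Then an element of $F_pM\cap V^\alpha M$ has a lowest-degree homogeneous component lying in $F_p[M]^\alpha$. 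If strictness is hard to obtain directly, I would instead use \eqref{4.11} to bound $F_p\Gr_V^\alpha M$ from above by the image of $F_p[M]^\alpha$.

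\emph{Converse.} Assume \eqref{4.12} holds and $K$ is monodromical. Then the $\D$-module isomorphism $M\simeq\Sp_XM$ from \eqref{4.6} is filtered, and it is compatible with $K\simeq\Sp_XK$ from \eqref{2.5}. The weight filtrations then agree, because a morphism of mixed Hodge modules that is an isomorphism on the underlying filtered $\D$-modules and $A$-complexes is an isomorphism in $\MHM$ (the forgetful functor is faithful and $W$ is strict). Hence \eqref{4.8} holds.
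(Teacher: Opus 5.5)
\textbf{Gap: the argument that \eqref{4.13} implies the other conditions is circular.} Your implications monodromical $\Rightarrow$ \eqref{4.13}, \eqref{4.12} $\Rightarrow$ \eqref{4.11} and \eqref{4.11} $\Rightarrow$ \eqref{4.13} are essentially fine. For the last one, expanding $m=\sum_if_im_i$ with $m_i\in F_p[M]$ into homogeneous components, and using uniqueness of those components, already suffices; no strictness is needed. The trouble is in closing the loop.

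Your step \eqref{4.13} $\Rightarrow$ \eqref{4.12} assumes what it proves. After identifying $M$ with $\Sp_XM=\Oc_Y\otimes\Gr_VM$ as $\D$-modules, the first condition of \eqref{4.13} only says that the image of $\Oc_Y\otimes\pi^{-1}F_p[M]$ is $F_p\Sp_XM$, i.e.\ $F_p\Sp_XM\subset F_pM$. The reverse inclusion $F_pM\subset\Oc_Y\cdot F_p[M]$ is the whole content of \eqref{4.12}, and ``transporting the Hodge filtration'' presupposes it.

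Your converse has the same defect for $W$. You use that a morphism of mixed Hodge modules bijective on underlying objects is an isomorphism, but you never show that $\Sp_X\M\to\M$ is a morphism of mixed Hodge modules. In general $W$ is extra data, not determined by $(M,F)$ and $K$. The weight filtration of $\Sp_X\M=\psi_p\Hc^1q^*\M$ is a relative monodromy filtration, so its agreement with $W$ on $\M$ must be proved. You also only assert that the $\D$-module isomorphism is compatible with \eqref{2.5}.

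\textbf{What the paper does instead.} It proves \eqref{4.13} $\Rightarrow$ monodromical directly, using two ingredients your proposal lacks.

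First, \eqref{2.5} is compatible with $W$. The monodromy of $\psi_pq^*\Gr^W_iK$ is semisimple by \eqref{4.14}, since a polarizable variation of Hodge structure on $S^*$ has semisimple monodromy. Hence $N$ vanishes on the graded pieces, and the relative monodromy filtration coincides with the filtration induced by $W$. The paper also checks that \eqref{2.5} is compatible with $M\simeq\Oc_Y\otimes\pi^{-1}[M]\simeq\Oc_Y\otimes\pi^{-1}\Gr_VM\simeq\Sp_XM$. This uses the splitting of \eqref{3.3}, which gives $\psi_p\Hc^1q^*M\cong\Hc^1i_1^!\Hc^1q^*M$.

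Second, the first condition of \eqref{4.13} gives $F_p\Sp_XM\to F_pM$. So one gets a morphism of mixed Hodge modules $\Sp_X\M\to\M$ that is bijective on the underlying $\D$-modules. By strictness it is an isomorphism in $\MHM(Y,A)$, and this is exactly where the inclusion $F_pM\subset\Oc_Y\cdot F_p[M]$ comes from. The cycle then closes as monodromical $\Rightarrow$ \eqref{4.11} $\Leftrightarrow$ \eqref{4.12} $\Rightarrow$ \eqref{4.13}.
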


\begin{proof}
By \eqref{3.3} the filtration $V$ of $M$ is indexed by $\Q$ and \eqref{3.3} holds with $V^i,V^{i-j}$ replaced by $V^\alpha,V^{\alpha -j}$ for $\alpha\in\Q$.
Then the underlying filtered $\D_{C_XY}$-module $(M',F)$ is given by
\begin{equation*}
F_pM'=\Oc_{C_XY}\motim_{\pi'{}^{-1}[\Oc_{C_XY}]}\pi'{}^{-1}(F_p\Gr_VM)
\end{equation*}
and \eqref{4.11} follows from the monodromical condition.
We verify the equivalence \eqref{4.11} $\Leftrightarrow$ \eqref{4.12} using the flatness of $\Oc_Y$ over $\pi^{-1}[\Oc_Y]$ and \eqref{4.3} $\Leftrightarrow$ \eqref{4.4}.
Then \eqref{4.12} $\Rightarrow$ \eqref{4.13} is easy, where the second condition of \eqref{4.13} follows from \eqref{4.11} and \eqref{4.1} $\Leftrightarrow$ \eqref{4.3}.
Assume \eqref{4.13}.
Then we have the isomorphism \eqref{2.5}.
It is compatible with the weight filtration, since the monodromy of $\psi_pq^*\Gr_i^WK$ is semisimple.
Indeed, we have
\begin{equation} \label{4.14}
\begin{aligned}
&\h{A polarizable variation of Hodge structure on $S^*$ has a semisimple monodromy,}\\ &\h{and its Hodge filtration is given by local systems,}
\end{aligned}
\end{equation}
\ms\nin
since its pull back to a universal covering $\St^*\,\, (\simeq\C)$ is a constant variation.
We verify that \eqref{2.5} is compatible with the isomorphism
\begin{equation*}
M\buildrel\sim\over\leftarrow\Oc_Y\motim_{\pi^{-1}[\Oc_Y]}\pi^{-1}[M]\simto\Oc_Y\motim_{\pi^{-1}[\Oc_Y]}\Gr_VM\simeq\Sp_XM
\end{equation*}
using the splitting of \eqref{3.3} which gives an isomorphism $\psi_p{\Hc}^1q^*M\cong{\Hc}^1i_1^!{\Hc}^1q^*M$ where $i_1:Y{\times}\{1\}\to D_XY$.
Then by the first condition of \eqref{4.13} we get the morphism of mixed Hodge modules $\Sp_X((M,F),K;W)\to ((M,F),K;W)$ which must be an isomorphism by the strictness.
\end{proof}

\begin{rem} \label{R4.2}
As a corollary of the above proof, we get a canonical isomorphism
\begin{equation} \label{4.15}
\M\simeq\Sp_X\M\q{\rm for}\q\M\in\MHM(Y,A)_{\rm mon}
\end{equation}
where $\pi:Y\to X$ may be as in Section~\ref{S4}, since the assertion is local and we can replace $Y,X$ by smooth ones using local embeddings.
By a similar argument we get the canonical isomorphism compatible with \eqref{2.2} :
\begin{equation} \label{4.16}
\psi_p\M\simto{\Hc}^{-1}i_1^*\M\q {\rm for}\q\M\in\MHM(Y,A)_{\rm mon}
\end{equation}
where $Y$ is a line bundle $X{\times}S$ with projection $p:Y\to S$.
Indeed, we get the morphism \eqref{4.16} by \eqref{4.11}, \eqref{4.13}, and it must be an isomorphism by strictness.
Here note that with the notation of Section~\ref{S4}, ${\rm Ker}(\theta|_{Y^*})$ and $\pi^{-1}[M]|_{Y^*}$ are related by the Nilson class functions on $S^*$, and this relation is same as in the isomorphism between $\psim_p\DR$ and $\DR\psi_p$.
By construction the isomorphism \eqref{4.15} coincides with the one obtained by \eqref{4.16} and the lemma below.
We get also
\begin{equation*}
\Sp_X\M\in\MHM(C_XY,A)_{\rm mon}\q{\rm for}\q\M\in\MHM(Y,A),
\end{equation*}
where $X,Y$ are as in Section~\ref{S3}.
\end{rem}

\begin{prop} \label{P4.3}
With the notation of Section~{\rm\ref{S4}}, $\M\in\MHM(Y,A)$ is monodromical, if and only if its underlying $A$-complex is monodromical and {\rm \eqref{4.9}} holds.
\end{prop}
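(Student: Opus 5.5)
Necessity is immediate. If $\M$ is monodromical, then \eqref{4.8} gives $\M\simeq\Sp_X\M$. By the compatibility of $\Sp_X$ with the forgetful functor, the underlying $A$-complex $K$ is then isomorphic to $\Sp_XK$. By Lemma~\ref{L2.1}, and more simply by \eqref{2.5}, such a complex is monodromical in the sense of Section~\ref{S2}. Condition \eqref{4.9} was already noted to follow from \eqref{4.8} by the blow-up construction of Section~\ref{S1}: $\Sp_X\M$ is defined on $D_XY\subset(Y{\times}S)'$, and $C_XY=Y$ embeds in the projective completion.

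For sufficiency, the plan is to verify condition \eqref{4.13} of Lemma~\ref{L4.1}. Its second half, that $K$ is monodromical, is the hypothesis. So it remains to prove that the natural morphism $F_p[M]\to F_p\Gr_VM$ is an isomorphism for every $p$. The question is local on $X$, so we may assume $Y=X{\times}S$ is trivial, $X$ is smooth, and $t$ is the fiber coordinate. Since $K$ is monodromical, \eqref{4.1}$\Leftrightarrow$\eqref{4.3} shows that the underlying $\D$-module $M$ is monodromical. By \eqref{4.7}, $[M]\simto\Gr_VM$ as $\C$-graded modules. Hence the map $F_p[M]\to F_p\Gr_VM$ is injective on each graded piece, and the real content is surjectivity. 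In other words, every element of $F_p\Gr_V^\alpha M$ must lift to an element of $F_pM$ that is a generalized $\theta$-eigenvector with eigenvalue $\alpha$.

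Here the hypothesis \eqref{4.9} comes in. Extend $\M$ to $\widehat{\M}$ on $\Yh$. At infinity, with coordinate $t'=1/t$, $\widehat\M$ has its own $V$-filtration, and the Euler field becomes $-t'\rd_{t'}$. I would consider the direct image $\hat\pi_*$ of the pieces $\Gr^W$ of $\widehat{\M}$, or equivalently use the mixed Hodge module structure on $\pi_*$ restricted to the eigenspaces. The $\C^*$-monodromicity of $K$ makes $\hat\pi_*\widehat\M$ compatible with the graded decomposition. The aim is to identify $(\pi_*M)^\alpha$, with its filtration $F$, with both $\Gr_V^\alpha M$ at the zero section and the corresponding graded piece at infinity. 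Both of these are mixed Hodge modules on $X$: they are $\psi$ and $\varphi$ pieces along $t$ and along $t'$. Comparing them through morphisms of mixed Hodge modules lets strictness of $F$ upgrade the bijectivity on the $\D$-module level to bijectivity on $F_p$.

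The main obstacle is this last filtered comparison. Without \eqref{4.9} the Hodge filtration may fail to be algebraic in $t$, for example $F$ twisted by $\exp(t)$-type behaviour invisible to $K$. One must therefore show that extendability forces $F_pM$ to be generated by $\theta$-finite sections. My first attempt is to reduce, by exactness of $\Gr^W$ and induction on weight length, to pure polarizable $\M$. In that case the restriction to $Y^*$ is a polarizable variation on the $\C^*$-orbits, and \eqref{4.14} holds. Using the extension over $\Yh$, this variation has regular, quasi-unipotent behaviour at both $0$ and $\infty$ along each fiber. The Hodge filtration on each fiber is then given by local systems, that is, by elements of $\Ker(\theta-\alpha)^k$, which yields \eqref{4.11}. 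Lemma~\ref{L4.1} then gives the conclusion.
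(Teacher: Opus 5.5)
Your necessity direction is fine (cite Lemma~\ref{L2.1}; \eqref{2.5} goes the other way). The sufficiency argument, however, has a genuine gap at the step ``by exactness of $\Gr^W$ and induction on weight length, reduce to pure polarizable $\M$''.

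Monodromicity in the sense of \eqref{4.8}, equivalently the filtered condition \eqref{4.13}, is not stable under extensions. The extension data is exactly what \eqref{4.9} controls. Here is a counterexample to the inductive step:
\begin{itemize}
\item Take $X=\pt$, $Y=\C$. Let $\M$ be the variation of mixed Hodge structure on $\C$ (shifted by $[1]$) with constant underlying local system, which is the extension of $A(0)$ by $A(1)$ with class $e(t)=t\in\C/(2\pi i)A$.
\item Griffiths transversality imposes nothing, so this is a mixed Hodge module on the analytic space $\C$.
\item $K$ and both graded pieces $\Gr^W\M$ are constant, hence monodromical.
\item But $\Sp_X\M$ is the constant variation with class $e(0)=0$. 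Comparing the fibers at $t=1$ shows $\M\not\simeq\Sp_X\M$.
\item Of course this $\M$ does not extend over $\PP^1$.
\end{itemize}
Your pure-case argument only invokes \eqref{4.14}, which needs no extendability, and your inductive step does not use \eqref{4.9} at all. So, as written, the proposal would also ``prove'' this false case.

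Your middle paragraph does not close the gap. The identification of $\psi_tK$ with $\psi_{1/t}K$ through the fiber $t=1$ is a priori only a morphism of $A$-complexes. Likewise, $(\pi_*M)^\alpha$ with the induced $F$ is not known to underlie a mixed Hodge module on $X$; that is essentially what is to be proved. Even for pure $\M$, passing from flatness of $F$ along the orbits on a generic $\C^*$-stable open subset to \eqref{4.11} near $X$ requires Saito's description of $F$ via $V$, which you do not supply.

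The missing idea, which is how the paper actually uses \eqref{4.9}, is to build a \emph{presentation} of $\M$ by objects that are monodromical by construction. The paper proceeds as follows.
\begin{itemize}
\item It reduces to a trivial line bundle, replacing $\M$ by $\Hc^1q^*\M$ on $D_XY$, which remains extendable.
\item It reduces to the case $\M=j_*j^*\M$. This uses $0\to i_*\Hc^0i^!\M\to\M\to j_*j^*\M$ and the fact that $\Sp_X$ induces an isomorphism on the relevant ${\rm Ext}^1$, since it commutes with $i_*i^*$ and $j_!j^*$.
\item It uses extendability to give $\Hc^{-1}\pi'_*(j^*\M\otimes E^\lambda_k)$ the structure of a mixed Hodge module on $X$. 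Without \eqref{4.9} the non-proper direct image $\pi'_*$ is not available.
\item This yields the morphisms \eqref{4.17}--\eqref{4.18} from $\M'\boxtimes\check E^\lambda_k$ to $j^*\M$, and finitely many of them are jointly surjective. Surjectivity is a statement about the underlying $A$-complexes, so induction on $W$ is harmless \emph{there}.
\item Repeating the construction on the kernel reduces the assertion to $\M'\boxtimes j_*\check E^\lambda_k[1]$ with $\M'\in\MHM(X,A)$, which is visibly monodromical.
\item The isomorphism then passes to the cokernel by exactness of $\Sp_X$ and the canonical isomorphism \eqref{4.15}.
\end{itemize}
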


\begin{proof}
Assume \eqref{4.9} and $K\simeq\Sp_XK$, where $K$ is the underlying $A$-complex.
We first reduce to the case of trivial line bundle $Y=X{\times}S $.
Consider ${\Hc}^1q^*\M$ on $D_XY=Y{\times}S$, see \eqref{2.4}.
They satisfy the assumptions.
Indeed, $q{\times}p:D_XY\to Y{\times}S$ induces a bimeromorphic map between $\Yh{\times}\Sh$ and $\Yh{\times}\Sh$ by \eqref{2.4} (where $\Sh={\PP}^1$) and $ {\Hc}^1q^*\M$ is extendable to $\Yh{\times}\Sh$ by \cite[2.18]{mhm}, since $\M\boxtimes A_{h,S}[1]$ is extendable to $\Yh{\times}\Sh$.
(In this paper, the constant mixed Hodge module on a smooth variety $Z$ is denoted by $A_{h,Z}[\dim Z]$).
Then we can use \eqref{4.16}.
Secondly we reduce to the case $\M\in\MHM(Y,A)_{{\rm mon},*}$.
Let $j:Y^*\into Y$ and $i:X\into Y$ be natural inclusions.
Consider an exact sequence
\begin{equation*}
0\to i_*({\Hc}^0i^!)\M\to\M\to j_*j^*\M\to i_*({\Hc}^1i^!)\M
\end{equation*}
and put $\M'={\rm Im}\,(\M\to j_*j^*\M)$.
If $j_*j^*\M$ is monodromical, so is $\M'$, since $\Sp_X$ is exact and induces the identity on $i_*\MHM(X,A)$.
Moreover the morphism
\begin{equation*}
{\rm Ext}\/^1(\M',i_*({\Hc}^0i^!)\M)\to {\rm Ext}\/^1(\Sp_X\M',\Sp_Xi_*({\Hc}^0i^!)\M)
\end{equation*}
induced by $\Sp_X$ is an isomorphism.
Indeed, the left-hand side is naturally isomorphic to ${\rm Ext}\/^1(i_*i^*\M',i_*({\Hc}^0i^!)\M)$ and $\Sp_X$ commutes with $i_*i^*, j_!j^*$, see \cite[2.30]{mhm}.
Therefore \eqref{4.8} for $\M$ is reduced to that for $j_*j^*\M$.
Let $\check E_k^{\lambda}$ be the dual of $E_k^{\lambda}$.
Their pull-backs to $Y$ will be also denoted by $\check E_k^{\lambda},E_k^{\lambda}$.
Put $\pi'=\pi|_{Y^*}$.
Then for $\M'\in\MHM(Y^*,A)$ extendable to $\Yh$, we have a natural morphism of mixed Hodge modules
\begin{equation} \label{4.17}
({\Hc}^1\pi'^*)({\Hc}^{-1}\pi'_*)\M'\to\M',
\end{equation}
since the left-hand is isomorphic to ${\Hc}^{-1}\pi'_*(\M'\boxtimes A_{h,S^*}[1])$ and the morphism is obtained by the connecting morphism of ${\Hc}^{\ssb}\pi'_*$ of the short exact sequence
\begin{equation*}
0\to\delta'_*\M'\to j'_!j'^*(\M'\otimes A_{h,S^*}[1])\to\M'\boxtimes A_{h,S^*}[1]\to 0
\end{equation*}
where $\delta':S^*\to S^*{\times}S^*$ is diagonal and $j'$ its complement.
(We will denote by the same symbol the base change of these morphisms by $X\to S$.) Applying \eqref{4.17} to $j^*\M\otimes E_k^{\lambda}$, we get the natural morphism
\begin{equation} \label{4.18}
({\Hc}^1\pi'^*)({\Hc}^{-1}\pi'_*)(j^*\M\otimes E_k^{\lambda})\otimes\check E_k^{\lambda}\to j^*\M\otimes E_k^{\lambda}\otimes\check E_k^{\lambda}\to j^*\M
\end{equation}
where the last morphism is induced by the natural pairing $E^\lambda_k\otimes\check E^\lambda_k\to A_{h,S^*}$.
The direct sum of \eqref{4.18} for a finite number of $\lambda,k$ induces a surjective morphism onto $j^*\M$ locally on $X$ by an argument similar to Lemma~\ref{L3.2}. Here we can use an inductive argument via the short exact sequences $0\to W_r\M\to W_{r'}\M\to W_{r'}\M/W_r\M\to0$ with $j^*W_{r-1}\M=0$ and also the inductive limit for $k\to\infty$. Note that ${\Hc}^{-1}\pi'_*$ corresponds to taking the kernel of $T{-}id$.
Applying a similar argument to the kernel of the surjective morphism, the assertion can be reduced to the case $\M=\M'\boxtimes j_*\check E^\lambda_k$ [1] for $\M'\in\MHM(X,A)$. The assertion then follows.
\end{proof}

\begin{lem} \label{L4.4}
With the notation and the assumption of {\rm \eqref{4.10}}, we have equivalences of categories
\begin{equation} \label{4.19}
\begin{aligned}
&\psi_p\,\,\,\hbox{{\rm (}or}\,\,\,\varphi_p):\MHM(Y,A)_{\mon,*}\simto\MHM(X,A;T_s,N),\\ &\psi_p\,\,\,\hbox{{\rm (}or}\,\,\,\varphi_p):\MHM(Y,A)_{\mon,!}\simto\MHM(X,A;T_s,N).
\end{aligned}
\end{equation}
\end{lem}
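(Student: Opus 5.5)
The plan is to construct an explicit quasi-inverse and then check the two compositions using the results already proved in this section. We treat the case $\MHM(Y,A)_{\mon,*}$; the case $\MHM(Y,A)_{\mon,!}$ follows by duality, since $\DD$ interchanges the two subcategories and commutes with $\psi_p,\varphi_p$ up to Tate twist and $T_s\mapsto T_s^{-1}$. First note that for $\M\in\MHM(Y,A)_{\mon,*}$ the morphism ${\rm Var}:\varphi_{p,1}\M\to\psi_{p,1}\M(-1)$ is an isomorphism. Indeed, its kernel and cokernel compute $\Hc^{0}i^!\M,\Hc^{1}i^!\M$, and these vanish because $\M\simto j_*j^*\M$. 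Hence $\varphi_p\M\simeq\psi_p\M$ as objects of $\MHM(X,A;T_s,N)$ (up to the twist on the unipotent part, absorbed by the identification of $N$), so it suffices to treat $\psi_p$. For the $!$-case one uses ${\rm can}$ instead.

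For the functor in the opposite direction, given $(\M',T_s,N)\in\MHM(X,A;T_s,N)$, decompose $\M'=\mopl_\lambda\M'_\lambda$ according to the eigenvalues of $T_s$, as in Lemma~\ref{L3.1}. For $k\gg0$ with $N^{k+1}=0$ locally, define
\begin{equation*}
\Phi(\M')=\mopl_\lambda j_*\Ker\bl(\M'_\lambda\boxtimes \check E_k^{\lambda}\to \M'_\lambda\boxtimes\check E_k^{\lambda}\oplus \M'_\lambda\boxtimes\check E_k^{\lambda}(-1)\br),
\end{equation*}
where the map is $(T_s\boxtimes T_s-\id)\oplus(N\boxtimes\id+\id\boxtimes N)$ and $\boxtimes$ is taken with respect to $Y^*=X{\times}S^*$. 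In other words, we take the "invariant part" of $\M'\boxtimes\check E_k^\lambda[1]$; since $T_s,N$ act on $\M'$ as endomorphisms of mixed Hodge modules, this is a mixed Hodge module. It is independent of $k\gg0$ via the inclusions $E_k\into E_{k+1}$. The underlying $A$-complex of $\Phi(\M')$ is monodromical, and $\Phi(\M')$ is extendable over $\Yh$ since it is built from $\check E^\lambda_k$, which extends. So $\Phi(\M')$ is monodromical by Proposition~\ref{P4.3}, and it lies in $\MHM(Y,A)_{\mon,*}$ by construction.

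Next compute $\psi_p\Phi(\M')$. By the isomorphism $\psi_p(\M'\boxtimes E)=\M'\boxtimes\psi_tE$ used in the proof of Lemma~\ref{L3.2}, combined with the isomorphism \eqref{4.16} $\psi_p\simeq\Hc^{-1}i_1^*$, the restriction to $s=1$ recovers $\M'$ with its $T_s,N$, after the trace projections from Lemma~\ref{L3.2}. This gives $\psi_p\ssc\Phi\simeq\id$. Conversely, for $\M\in\MHM(Y,A)_{\mon,*}$ we need $\Phi(\psi_p\M)\simeq\M$. The morphism comes from \eqref{4.18}: it is the direct sum over $\lambda,k$ of $(\Hc^1\pi'^*)(\Hc^{-1}\pi'_*)(j^*\M\otimes E^\lambda_k)\otimes\check E^\lambda_k\to j^*\M$, followed by $j_*$ and the isomorphism $\M\simto j_*j^*\M$. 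Lemma~\ref{L3.2} identifies $\Hc^{-1}\pi'_*(j^*\M\otimes E_k^\lambda)$ with $\psi_{p,\lambda^{-1}}\M$ for $k\gg0$, so the source of this morphism is $j^*\Phi(\psi_p\M)$.

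The main obstacle is showing that this morphism is an isomorphism compatibly with the Hodge and weight filtrations. First check it on the underlying $A$-complexes: by \eqref{2.2}--\eqref{2.3}, monodromical complexes on $Y^*$ are equivalent to $D_c(A_X;T)$, and under this equivalence the morphism becomes the identity of $(\psi_pK,T)$. It is therefore an isomorphism of mixed Hodge modules, because the forgetful functor is faithful and exact. Full faithfulness of $\psi_p$ then follows from these two natural isomorphisms. Some care is needed so that the isomorphisms are natural and compatible with the choices in \eqref{3.4}, \eqref{3.6}. If needed, reduce to $X$ smooth by local embeddings, as in Remark~\ref{R4.2}.
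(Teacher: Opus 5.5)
You have the paper's overall architecture: reduce to $\psi_p$ via $\can$ and $\Var$, invert $\psi_p$ by twisting with the standard variations and taking invariants, get $\psi_p\circ\Phi\simeq\id$ from the trace projection of Lemma~\ref{L3.2}, and conclude with the faithful exact forgetful functor. Treating the $!$ case by duality is also fine. The step that fails is $\Phi(\psi_p\M)\simeq\M$.

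The source of \eqref{4.18} is not $j^*\Phi(\psi_p\M)$. It is the whole tensor product $\pi'^*\psi_{p,\lambda^{-1}}\M\otimes\check E^\lambda_k$, and \eqref{4.18} only yields a surjection, with nonzero kernel in general. The paper uses it as the first step of a resolution and then treats the kernel.

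Restricting \eqref{4.18} to the invariant subobject does not help. Take $X=\pt$, unipotent monodromy, $R=A[N]/(N^{k+1})$ and $V$ the fiber of $j^*\M$ at $s=1$. On fibers, \eqref{4.18} is the multiplication map $V\otimes_AR\to V$ of $R$-modules. The invariant subobject, for either sign convention, is sent into $N^kV$. For instance, for $\M=j_*(\M'\boxtimes A_{h,S^*}[1])$ (where $N=0$) and any $k\ges1$, the restricted morphism is zero. In fact no subobject of the source can map isomorphically onto $j^*\M$, since this would split $V\otimes_AR\to V$ and force $V$ to be $R$-free. So you never obtain a morphism of mixed Hodge modules between $\Phi(\psi_p\M)$ and $\M$ to which the faithfulness of the forgetful functor could be applied. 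Producing such a morphism is exactly the fullness problem.

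The paper proceeds differently:
\begin{itemize}
\item Faithfulness of $\psi_p$ comes from the equivalence \eqref{2.2} on $A$-complexes together with faithfulness of the forgetful functor.
\item $\psi_p\circ\eta\simeq\id$ comes from the trace projection.
\item The remaining essential surjectivity comes from resolving $\M$ via \eqref{4.18}, iterating on the kernel as in the proof of Proposition~\ref{P4.3}, by objects of the form $\M'\boxtimes j_*\check E^\lambda_k[1]$, for which the comparison is direct.
\end{itemize}
A repair closer to your plan would use a morphism in the coevaluation direction, $j^*\M\to\pi'^*\psi_{p,\lambda}\M\otimes E^\lambda_k$, e.g.\ the dual of \eqref{4.18} for $\DD\M$. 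On local systems this is the dual embedding $V\into V\otimes_AR$, whose image is exactly the invariant part. One can then check that it factors through the invariant subobject and is an isomorphism on underlying $A$-complexes.

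Separately, your $\Phi$ is mis-normalized. The monodromy of $\check E^\lambda_k$ has eigenvalues $\lambda^{-1}$, so on $\Ker(T_s\boxtimes T_s-\id)$ the monodromy is $T_s^{-1}\boxtimes\id$. The invariants are identified with $\M'$ only through the coefficient of the lowest-weight vector of $\check E_k$. Hence $\psi_p\Phi(\M')\simeq(\M'(k);T_s^{-1},-N)$, which depends on $k$. You should use $E^\lambda_k$ with $(T_s\boxtimes T_s^{-1}-\id)\oplus(N\boxtimes\id-\id\boxtimes N)$, as the paper does.
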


\begin{proof}
We have the canonical isomorphisms
\begin{equation*}
\begin{aligned}
\can&:\psi_p\M\simto\varphi_p\M\qquad\q\hbox{if}\,\,\M\in\MHM(Y,A)_{\mon,!},\\ \Var&:\varphi_p\M\simto\psi_p\M(-1)\q\hbox{if}\,\,\M\in\MHM(Y,A)_{\mon,*}.\\ 
\end{aligned}
\end{equation*}
So it is enough to show the assertion for $\psi_p$.
We prove the case of $\MHM(Y,A)_{\mon,*}$; the other case can be argued similarly.
We construct the inverse functor of $\psi_p$ (locally on $X$) as follows.
Take $\M'\in\MHM(X,A;T_s,N)$ and assume $N^{k+1}\M'=0$.
Using the decomposition by eigenvalues of $T_s$, we may assume that the eigenvalues of $T_s$ on $\M'$ are $\lambda$.
Let $j:Y\setminus X\to Y$ denote the inclusion.
Then, restricted to the subcategory satisfying the conditions
\begin{equation*}
\prod_{\alpha\in\lambda}(T_s-\alpha)\M'=0,\q N^{k+1}\M'=0,
\end{equation*}
the inverse functor $\eta$ is given by applying $j_*$ or $j_!$ to the kernel of the morphism
\begin{equation*}
(T_s\boxtimes T_s^{-1}-id)\oplus(N\boxtimes id-id\boxtimes N):\M'\boxtimes E^\lambda_k\to\M'\boxtimes E_k^\lambda\oplus\M'\boxtimes E_k^\lambda(-1).
\end{equation*}
Indeed, we have a canonical morphism $\psi_p\eta\M'\to\M'$ induced by the projection $\psi_t E_k^\lambda\to A$ as in the proof of Lemma~\ref{L3.2}, and it is an isomorphism by the similar argument.
In particular, $\psi_p$ is faithful.
The same argument applies to the $A$-complexes, and in this case $\psi_p$ and $\eta$ are equivalences of categories by \eqref{2.2}.
Therefore $\eta$ on the mixed Hodge modules is faithful by the faithfulness of the forgetting functor, and $\psi_p$ is fully faithful.
Then the essential surjectivity is shown by using a resolution as in \eqref{4.18}.
This finishes the proof of Lemma~\ref{L4.4}.
\end{proof}

\begin{prop} \label{P4.5}
Let $\pi:Y=X{\times}S\to X$ be a trivial line bundle.
Let $i_P:X{\times}P\into Y,\ j_P:X{\times}(S\setminus P)\into Y$ be natural inclusions for $P\in S^*$ where $\{P\}$ is denoted by $P$ to simplify the notation.
Then for $\M\in\MHM(Y,A)_{\mon}$, we have functorially $\mu(\M)\in\MHM(Y,A)_{\mon,!}$ and $C_P^{-1}(\M),\,C_P^0 (\M)\in\MHM(Y,A)_{\mon}$ together with a commutative diagram:
\begin{equation} \label{4.20}
\begin{gathered}
\xymatrix{\M \ar[d]^u & C_P^{-1}(\M) \ar[l] \ar[r] \ar[d]^v & (\Hc^1\pi^*)(\Hc^{-1}i_P^*)\M \ar[d]^w\\ 
\mu(\M) & C_P^0(\M) \ar[l] \ar[r] & (\Hc^1\pi^*)(\Hc^0\pi_*)(j_P)_!j_P^*\M }
\end{gathered}
\end{equation}
inducing quasi-isomorphisms between the mapping cones of $u,v,w$ which represent $\pi^*\pi_*\M[1]$.
Moreover the underlying triangle of
\begin{equation*}
\M\buildrel u\over\longrightarrow\mu(\M)\longrightarrow C(w)\buildrel +1\over\longrightarrow
\end{equation*}
is canonically isomorphic to
\begin{equation} \label{4.21}
K\longrightarrow\mu(K)\longrightarrow\pi^*\pi_*K[1]\buildrel +1\over\longrightarrow
\end{equation}
obtained by the definition {\rm \eqref{2.1}}.
\end{prop}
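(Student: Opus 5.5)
Write $K$ for the underlying $A$-complex of $\M$. The plan is to build all objects in \eqref{4.20} inside $\MHM(Y,A)_{\mon}$ by means of Lemma~\ref{L4.4}. This lets us work on the side of $\MHM(X,A;T_s,N)$, and at the end compare with the $A$-complex picture of Section~\ref{S2} through the equivalences \eqref{2.2}, \eqref{2.7} and \eqref{4.16}.

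\textbf{Construction of $\mu(\M)$ and $u$.} First define $\mu(\M)\in\MHM(Y,A)_{\mon,!}$ as the object corresponding under the second equivalence of \eqref{4.19} to $\varphi_p\M$ with its $T_s,N$. It is constructed as $j_!$ of the kernel of $(T_s\boxtimes T_s^{-1}-id)\oplus(N\boxtimes id-id\boxtimes N)$ on $\varphi_p\M\boxtimes E_k^\lambda$, summed over $\lambda$, with $k\gg0$. The morphism $u:\M\to\mu(\M)$ then comes from full faithfulness of $\psi_p$ on the $!$-category, combined with the isomorphism $j_!j^*\M\to\mu(\M)$. More precisely, $\mu(\M)=j_!j^*\M$ as objects, but the map $u$ is obtained from $\can:\psi_p\to\varphi_p$. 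Equivalently, and more cleanly, set $\mu(\M)=\Coker$ or cone of the adjunction, realized as follows. Since $j_!j^*\M\to\M$ is a morphism of monodromical modules whose cone is supported on $X$ and is computed by $i^*\M$, one uses $\varphi_p$ to produce a morphism $\M\to\mu(\M)$ lifting $\can$ on vanishing cycles.

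\textbf{The middle column.} Take $C_P^{-1}(\M)$ and $C_P^0(\M)$ as kernel/image type objects obtained from the resolution \eqref{4.18}. Namely, $C_P^{-1}(\M)$ is the fiber product of $\M\to\mu(\M)$ and $(\Hc^1\pi^*)(\Hc^{-1}i_P^*)\M$ over the corresponding terms, arranged so that both rows exist functorially. The morphism
\[
(\Hc^1\pi^*)(\Hc^{-1}i_P^*)\M\to(\Hc^1\pi^*)(\Hc^0\pi_*)(j_P)_!j_P^*\M
\]
is $w$. It is induced by the connecting morphism of the triangle $(j_P)_!j_P^*\to\id\to(i_P)_*i_P^*$, after applying $\pi_*$. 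By \eqref{4.16}, $\Hc^{-1}i_P^*\M\simeq\psi_p\M$ for $P$ on the positive real ray after rotation. Here the key point is Lemma~\ref{L2.3}: it identifies $\pi_*$ of a sheaf vanishing on two fibers with $i_P^*[-1]$, and for mixed Hodge modules it gives that $C(w)$ represents $\pi^*\pi_*\M[1]$.

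\textbf{Comparison of cones and the underlying triangle.} Next I verify that the horizontal arrows induce quasi-isomorphisms of cones $C(u)\leftarrow C(v)\to C(w)$. By faithfulness and exactness of the forgetful functor, this may be checked on underlying $A$-complexes. There, $C(u)\simeq\pi^*\pi_*K[1]$ by \eqref{2.1}, and $C(w)\simeq\pi^*\pi_*K[1]$ by Lemma~\ref{L2.3} together with $\pi_*K\simeq i^*K$ for monodromical $K$. The comparison with \eqref{4.21} then follows because each identification is canonical.

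\textbf{Main obstacle.} The main difficulty is to make $u$, and the middle column, genuinely morphisms of mixed Hodge modules compatible with weights: $\pi^*\pi_*$ is not an exact functor on $\MHM$, which is why the detour through $C_P^{-1}$, $C_P^0$ and the $P$-fiber is needed. To handle this I would rely on strictness: once a morphism exists on the underlying filtered $\D$-modules and $A$-complexes compatible with $W$, it is a morphism in $\MHM$. Its canonicity would be checked via the eigenvalue decomposition of Lemma~\ref{L3.1} and the identifications in Lemma~\ref{L3.2}.
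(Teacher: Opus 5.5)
There is a genuine gap: you never construct the middle column of \eqref{4.20} or the horizontal arrows, and these are the actual content of the statement. Since $\pi^*\pi_*\M[1]$ is not a single mixed Hodge module, what is needed is a functorial zig-zag $C(u)\gets C(v)\to C(w)$ of quasi-isomorphisms in $C^b\MHM(Y,A)$; this is exactly what Corollary~\ref{C4.6} uses. Checking quasi-isomorphisms on the underlying $A$-complexes is fine, but only once the morphisms exist.

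Concretely, the middle column is missing in three ways. Your $C_P^{-1}(\M)$ is a ``fiber product over the corresponding terms'' with no common target specified: there is no given morphism from $\M$ or $\mu(\M)$ to anything built from $i_P^*$. $C_P^0(\M)$ is never defined. And \eqref{4.18} is a resolution of $j^*\M$ by $E_k^\lambda$-twists, which has nothing to do with the fiber over $P$. Without $C(v)$ the triangle $\M\to\mu(\M)\to C(w)\to$ is not even defined. Saying that ``each identification is canonical'' does not show that its morphisms agree with those of \eqref{4.21}.

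Your construction of $u$ is also flawed. Identifying $\mu(\M)$ via Lemma~\ref{L4.4} as the object of $\MHM(Y,A)_{\mon,!}$ with $\psi_p\mu(\M)\simeq\varphi_p\M$ is fine up to isomorphism. But the claim $\mu(\M)=j_!j^*\M$ is false: for the constant module $A_{h,Y}[\dim Y]$ one has $\pi^*\pi_*K\simto K$, so $\mu=0$, whereas $j_!j^*\M\ne0$. Moreover, full faithfulness of $\psi_p$ on the $!$-category does not by itself give morphisms out of $\M$, which is not in that category.

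The missing idea is to obtain all three columns from one exact sequence and one functorial resolution of a direct image. On $\Yt=Y{\times}_XY$ take the projections $\tilde\pi_1,\tilde\pi_2$, the diagonal $i:Y\into\Yt$ and its complement $j$. There is a short exact sequence
\[
0\to i_*\M\to j_!j^*(\Hc^1\tilde\pi_2^*)\M\to(\Hc^1\tilde\pi_2^*)\M\to0 .
\]
Its direct image by $\tilde\pi_1$ is the triangle \eqref{4.21}, since the outer terms give $\M$ and, by base change, $\pi^*\pi_*\M[1]$. So one sets $\mu(\M)=(\Hc^0(\tilde\pi_1)_*)j_!j^*(\Hc^1\tilde\pi_2^*)\M$, and $u$ is the direct image of the first arrow.

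The direct image by $\tilde\pi_1$ is then realized inside $\MHM$ by the cone of functors $C\bigl(\Hc^{-1}\tilde i_P^*\to(\Hc^0(\tilde\pi_1)_*)(\tilde j_P)_!\tilde j_P^*\bigr)$. Your $w$ is precisely this applied to the third term after base change; that part of your proposal is right. Applying the same device to the middle term gives the rest. One uses the complement of the two divisors $\Imm\,\tilde i_P$ and $\Imm\,i$, and applies $\Sp_X$ to restore monodromicity. This produces $C_P^{-1}(\M)=\M\oplus(\Hc^1\pi^*)(\Hc^{-1}i_P^*)\M$, $C_P^0(\M)$ and $v$. The horizontal arrows are given by deleting one of the two divisors.

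Compatibility with \eqref{4.21} then comes for free, because everything is the direct image of a single exact sequence. Lemma~\ref{L2.3} is not needed for this.
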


\begin{proof}
Consider a cartesian diagram
\begin{equation*}
\xymatrix{Y \ar[d]_{\pi_1}& \Yt \ar[l]_{\tilde\pi_2}\ar[d]^{\tilde\pi_1}\\ X & Y \ar[l]_{\pi_2}}
\end{equation*}
Let $i:Y\to\Yt$ be the diagonal embedding, and $j:\Yt\setminus Y\to\Yt$ its complement We denote by $\tilde i_P,\tilde j_P$ the base change of $i_P, j_P$ by $\pi_2$.
There is a natural short exact sequence
\begin{equation} \label{4.22}
0\to i_*\M\to j_!j^*(\Hc^1\tilde\pi_2^*)\M\to (\Hc^1\tilde\pi_2^*)\M\to 0
\end{equation}
and their direct images by $\tilde\pi_1$ can be defined by the mapping cone of functors:
\begin{equation*}
C((\Hc^{-1}\tilde i_P^*)\to (\Hc^0(\tilde\pi_1)_*)(\tilde j_P)_!\tilde j^*_P).
\end{equation*}
Moreover the direct image of \eqref{4.22} corresponds naturally to \eqref{4.21}, and we define
\begin{equation*}
\mu(\M)=(\Hc^0(\tilde\pi_1)_*)j_!j^*(\Hc^1\tilde\pi_2^*)\M
\end{equation*}
\begin{equation*}
C_P^{-1}(\M)=\M\oplus (\Hc^{-1}\tilde i^*_P)(\Hc^1\tilde\pi_2^*)\M
\end{equation*}
\begin{equation*}
C^0_P(\M)=\Sp_X(\Hc^0(\tilde\pi_1)_*)(\tilde j_P)_!\tilde j^*_P j_!j^*(\Hc^1\tilde\pi^*_2)\M.
\end{equation*}
Here
\begin{equation*}
(\Hc^{-1}\tilde i^*_P)(\Hc^1\tilde\pi^*_2)\M= (\Hc^1\pi^*_2)(\Hc^{-1}i^*_P)\M,
\end{equation*}
and the restriction of $C^0_P(\M)$ to $X{\times}(S\setminus P)$ is isomorphic to that of
\begin{equation*}
(\Hc^0(\tilde\pi_1)_*)(\tilde j_P)_!j^*_Pj_!j^*(\Hc^1\tilde\pi^*_2)\M.
\end{equation*}
We can identify $C(v:C_P^{-1}(\M)\to C^0_P(\M))$ with the direct image of $(\Hc^1\tilde\pi^*_2)\M$ by $(\pi_1)_*$ using the direct image of the complement of the two divisors $\Imm\,\tilde i_P$ and $\Imm\ i$ in $X{\times}(S\setminus P)$, and the horizontal morphisms in \eqref{4.20} are induced by deleting one of the divisors.
So the assertion follows.
\end{proof}

\begin{cor} \label{C4.6}
With the above notation, let $\M\in C^b(\MHM(Y,A)_{\mon})$.
Then we have functorially $\M',\M''\in C^b(\MHM(Y,A)_{\mon})$ together with a morphism $\M'\to\M''$ and a quasi-isomorphism $\M\to [\M'\to\M'']$ such that $\M'^{i}\in\MHM(Y,A)_{\mon,!}$ and $\M''^{i}=\Hc^1\pi^*\Hc^{-1}\pi_*\M''^{i}$.
\end{cor}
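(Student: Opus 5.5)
The plan is to apply Proposition~\ref{P4.5} termwise to the complex $\M$ and take the total complex of the resulting double complex. For each $i$, Proposition~\ref{P4.5} gives functorially the morphism $u^i:\M^i\to\mu(\M^i)$ with $\mu(\M^i)\in\MHM(Y,A)_{\mon,!}$. It also gives $w^i$, whose target is $(\Hc^1\pi^*)(\Hc^0\pi_*)(j_P)_!j_P^*\M^i$ and whose source is $(\Hc^1\pi^*)(\Hc^{-1}i_P^*)\M^i$; both are of the form $\Hc^1\pi^*$ of an object of $\MHM(X,A)$. Moreover the cones of $u^i$ and $w^i$ are identified, through $v^i$ and the diagram \eqref{4.20}, by quasi-isomorphisms representing $\pi^*\pi_*\M^i[1]$.

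Since all these constructions are functorial in $\M^i$, they are compatible with the differentials of $\M$, so we obtain double complexes. We then set
\begin{equation*}
\M'\defs\mu(\M)\oplus(\text{source of }w)[1],\qquad \M''\defs(\text{target of }w),
\end{equation*}
or, more precisely, we arrange the morphism $\M'\to\M''$ so that the single complex $[\M'\to\M'']$ is the mapping cone of the composite $C(w)[-1]\to\M\to\mu(\M)$, in the manner of the octahedral axiom. Concretely, the triangle $\M\to\mu(\M)\to C(w)\buildrel+1\over\to$ of Proposition~\ref{P4.5} gives a quasi-isomorphism
\begin{equation*}
\M\to C\bigl(\mu(\M)\to C(w)\bigr)[-1].
\end{equation*}
Unwinding $C(w)$, the right-hand side is the total complex of $\mu(\M)\oplus\text{source}(w)\to\text{target}(w)$. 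Each term of $\M'$ then lies in $\MHM(Y,A)_{\mon,!}$, except for the summand coming from $\text{source}(w)$. That summand can be moved into $\M''$ if degrees are shifted appropriately, and its terms satisfy $\M''^i=\Hc^1\pi^*\Hc^{-1}\pi_*\M''^i$, because $\pi_*\Hc^1\pi^*\M_0=\M_0[1]$ for $\M_0$ on $X$.

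The main obstacle is that Proposition~\ref{P4.5} produces only a triangle, namely a zigzag through $C_P^{\pm}$ inducing quasi-isomorphisms of cones, rather than an honest chain-level morphism $C(w)\to\M[1]$. To handle this, I would replace $\M$ by the quasi-isomorphic total complex built from $C_P^{-1}(\M)\to C_P^0(\M)$, using the left column of \eqref{4.20}. I would then check that the arrows $v$ and $w$ give a genuine morphism of complexes, so that $\M\to[\M'\to\M'']$ becomes a composite of honest quasi-isomorphisms. For the final arrow, a chain quasi-isomorphism from $\M$ itself, I would use that the kernel of $C_P^{-1}(\M)\to\M$ is the direct summand $(\Hc^1\pi^*)(\Hc^{-1}i_P^*)\M$, which splits off. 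Functoriality then follows from the functoriality in Proposition~\ref{P4.5}.
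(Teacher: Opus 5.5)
\textbf{There is a genuine gap in how you straighten the zigzag.}

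Your derived-category picture is correct. From $\M\buildrel u\over\to\mu(\M)\to C(w)\buildrel+1\over\to$ you want $\M'$ to represent $\mu(\M)$ and $\M''=C(w)$, whose terms are all of the form $\Hc^1\pi^*$ of a module on $X$. You also correctly see that Proposition~\ref{P4.5} only gives the zigzag $C(u)\leftarrow C(v)\to C(w)$. Drop the sentence about the cone of the composite $C(w)[-1]\to\M\to\mu(\M)$: that composite is zero in the derived category, so its cone is $\mu(\M)\oplus C(w)$, not $\M$.

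Your specific ingredients do not work:
\begin{itemize}
\item The complex $[C_P^{-1}(\M)\to C_P^0(\M)]=C(v)[-1]$ represents $\pi^*\pi_*\M$, not $\M$.
\item The split inclusion $\M\to C_P^{-1}(\M)=\M\oplus(\Hc^1\pi^*)(\Hc^{-1}i_P^*)\M$ is a chain map but not a quasi-isomorphism.
\end{itemize}
More fundamentally, any homotopy pushout or pullback that yields an honest chain map into (a model of) $C(w)$ puts terms like $\M^i$, $C_P^0(\M)^i$ or $C(u)^i$ in the slot of $\M'$. These lie only in $\MHM(Y,A)_{\mon}$. So the required condition $\M'^i\in\MHM(Y,A)_{\mon,!}$ is lost, and nothing in your plan restores it.

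\textbf{How the paper fixes this.} Let $a:C(v)\to C(u)\to\M[1]$ and $b:C(v)\to C(w)$, and set $C=C\bigl(a\oplus -b:C(v)[-1]\to\M\oplus C(w)[-1]\bigr)$.
\begin{itemize}
\item Since $b$ is a quasi-isomorphism, so is $\M\to C$, and there is now an honest chain map $C(w)[-1]\to C$.
\item Its cone $Z$ represents $\mu(\M)$. Hence $i^*Z=0$ for the zero section $i:X\into Y$, and $j_!j^*Z\to Z$ is a quasi-isomorphism, where $j:Y\setminus X\into Y$.
\item Because $j_!j^*$ is exact on mixed Hodge modules, $\M'\defs j_!j^*Z$ is a bounded complex with terms in $\MHM(Y,A)_{\mon,!}$.
\item With $\M''\defs C(w)$ and $\M'\to Z\to C(w)$ the natural map, one gets $[\M'\to\M'']\simeq[Z\to C(w)]\simeq C\simeq\M$, functorially in $\M$.
\end{itemize}
This application of $j_!j^*$ after straightening the zigzag is the idea missing from your proposal.

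The identification of $\M$ with $[\M'\to\M'']$ obtained this way is a zigzag $\M\to C\to[Z\to\M'']\leftarrow[\M'\to\M'']$ of quasi-isomorphisms. That is all Proposition~\ref{P5.2} uses, so you do not need a chain quasi-isomorphism out of $\M$ itself.
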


\begin{proof}
Let $a:C(v)\to C(u)\to\M[1]$ and $b:C(v)\to C(w)$ be the morphisms induced by \eqref{4.20}.
Let
\begin{equation*}
C=C(a\oplus -b:C(u)[-1]\to\M\oplus C(w)[-1])
\end{equation*}
so that the natural morphism $\M\to C$ is a quasi-isomorphism.
Let $j:Y\setminus X\into Y$ be the natural inclusion, and
\begin{equation*}
\M'=j_!j^*C(C(w)[-1]\to C),\q\M''=C(w).
\end{equation*}
Then we have a canonical morphism $\M'\to\M''$.
Since $j_!j^*$ is an exact functor, and $\M'\to C(C(w)[-1]\to C)$ is a quasi-isomorphism, we get the assertion.
\end{proof}

\begin{lem} \label{L4.7}
With the notation of Lemma~{\rm \ref{L2.1}}, we have natural isomorphisms in the category $D^b\MHM(X,A;T_s,N):$
\begin{equation} \label{4.23}
(\psi_f\M;T_s,N)\simeq (\psi_{f'}\Sp_X\M;T_s,N)
\end{equation}
\begin{equation} \label{4.24}
\begin{aligned}
(\varphi_f\M;T_s,N)&\simeq (\varphi_{f'}\Sp_X\M;T_s,N)\\ \simto(\varphi_{f'}\mu(\Sp_X\M);T_s,N)&\buildrel {\sim}\over\longleftarrow(\psi_{f'}\mu(\Sp_X\M);T_s,N).
\end{aligned}
\end{equation}
for $\M\in D^b\MHM(Y)$ which are compatible with {\rm \eqref{2.8}--\eqref{2.9}}.
Moreover {\rm \eqref{4.23}} and the first isomorphism of {\rm \eqref{4.24}} are compatible with $\can:\psi_1\to\varphi_1$ and $\Var:\varphi_1\to\psi_1(-1)$, where $\psi_1,\varphi_1$ denote the unipotent monodromy part.
\end{lem}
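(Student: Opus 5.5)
We lift the proof of Lemma~\ref{L2.1} to mixed Hodge modules, using the same geometry as there. Let $g$ be the function on $D_XY$ extending $f'$, with $g|_{Y{\times}S^*}=s^{-1}f$. By exactness of $\psi,\varphi,\Sp_X$ we may assume $\M\in\MHM(Y,A)$; the general case then follows termwise on complexes. The basic object is the mixed Hodge module $\Hc^1q^*\M$ on $D_XY$. We consider the two functors $\psi_p\psi_g$ and $\psi_g\psi_p$ applied to it, together with the analogous functors with $\varphi_g$ in place of $\psi_g$.

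The first step is to build comparison morphisms. On the one hand, $\psi_p\Hc^1q^*\M=\Sp_X\M$, so $\psi_g\psi_p\Hc^1q^*\M=\psi_{f'}\Sp_X\M$. On the other hand, $\psi_g\Hc^1q^*\M$ is monodromical with respect to the projection $X{\times}S\to X$; this follows from Proposition~\ref{P4.3}, since the underlying complex is monodromical by Lemma~\ref{L2.1} and extendability holds as in the proof of Proposition~\ref{P4.3}. Hence \eqref{4.16} identifies $\psi_p\psi_g\Hc^1q^*\M$ with its restriction to $s=1$, which is $\psi_f\M$.

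To compare $\psi_p\psi_g$ with $\psi_g\psi_p$, we realize both through the Kashiwara $V$-filtrations along $p$ and along $g$. Alternatively, following Lemma~\ref{L3.2}, we write each nearby cycle functor as $\Hc^{-1}i^*j_*(-\otimes E^\lambda_k)$ for $k\gg0$. Since $p^*E^\lambda_k\otimes g^*E^{\lambda'}_{k'}$ is defined globally, this gives a natural morphism $\psi_g\psi_p\to\psi_p\psi_g$ in $\MHM$. The morphism is compatible with $T_s$ and $N$, because it is built from $T_s\boxtimes T_s$ and $N\boxtimes\id+\id\boxtimes N$. Its underlying morphism of $A$-complexes is the isomorphism of Lemma~\ref{L2.1}. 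Since the forgetful functor is faithful and exact, the morphism is therefore an isomorphism in $\MHM$. This gives \eqref{4.23}, and the same argument with $\varphi_g$ gives the first isomorphism of \eqref{4.24}.

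The remaining two isomorphisms in \eqref{4.24} follow from Proposition~\ref{P4.5}. The morphism $\Sp_X\M\to\mu(\Sp_X\M)$ has cone $\pi^*\pi_*$ of something, and $\varphi_{f'}$ kills objects of the form $\pi^*(\cdot)$, since $f'$ is a submersion relative to them. So applying $\varphi_{f'}$ yields an isomorphism. Next, $\mu(\Sp_X\M)$ lies in $\MHM(Y',A)_{\mon,!}$. For such objects $\can$ is an isomorphism by the proof of Lemma~\ref{L4.4}, applied in the trivialized situation fibrewise over $X$. Both statements can be checked on underlying complexes, where they are \eqref{2.9}.

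Compatibility with $\can$ and $\Var$ holds because all of the morphisms above are natural in the pair $(\psi_{g,1},\varphi_{g,1})$ and $\psi_p$ commutes with the gluing data.

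The main obstacle is the first step: producing an actual morphism of mixed Hodge modules $\psi_g\psi_p\to\psi_p\psi_g$, rather than only an isomorphism of underlying complexes. I would attempt this through the $E^\lambda_k$ description, checking weight compatibility via the standard normalizations \eqref{3.4}--\eqref{3.6}. If that fails, I would fall back on the bifiltered $V$-filtration along the normal crossing divisor $\{pg=0\}$.
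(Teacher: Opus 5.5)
You follow the paper's route. As in Lemma~\ref{L2.1}, you reduce to the commutation of $\psi_p$ with $\psi_g,\varphi_g$ on $\Hc^1q^*\M$. You realize nearby cycles through $E^\lambda_k$-twists as in Lemma~\ref{L3.2}, so that everything is built from functors on $D^b\MHM$, and you test isomorphy on underlying $A$-complexes. Your restriction to $s=1$ and your treatment of the last two isomorphisms in \eqref{4.24} are fine.

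\textbf{The gap.} It sits exactly at the step you flag as the main obstacle. The reason you give there (``since $p^*E^\lambda_k\otimes g^*E^{\lambda'}_{k'}$ is defined globally, this gives a natural morphism'') does not produce a morphism. Write both iterates as two successive applications of $\Hc^{-1}i^*j_*(-\otimes E)$. Then the only natural maps are base-change maps. Put $h=(g,p)$, $U=h^{-1}(S^*{\times}S^*)$, $j_U:U\into D_XY$ and $i_0:h^{-1}(0,0)\into D_XY$. Both $\psi_g\psi_p$ and $\psi_p\psi_g$ receive a canonical morphism from $i_0^*j_{U*}(\M'\otimes h^*(E^{\lambda'}_{k'}\boxtimes E^\lambda_k))$. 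A comparison $\psi_g\psi_p\to\psi_p\psi_g$ exists only once these maps are known to be invertible.

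Invertibility of the relevant base-change maps is equivalent to the canonical morphism $j_{b!}j_{a*}(\M'\otimes h^*L)\to j_{a*}j_{b!}(\M'\otimes h^*L)$ being an isomorphism for $(a,b)=(1,2),(2,1)$. The relevant maps are restriction to $p=0$ versus $j_{1*}$, and restriction to $g=0$ versus $j_{2*}$. Here $j_1,j_2$ are the complements of $g=0$ and $p=0$, and $L$ is a local system on $S^*{\times}S^*$. This morphism exists in $D^b\MHM$. The paper's proof consists precisely in checking that it is an isomorphism on underlying complexes by Lemma~\ref{L1.2}, i.e.\ by the Thom $A_f$ stratification.

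Saying that the underlying map ``is the isomorphism of Lemma~\ref{L2.1}'' does not replace this. You would first have to construct the map and identify its underlying morphism. Moreover, Lemma~\ref{L2.1} itself rests on the same base-change input.

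\textbf{The $\varphi_g$ case.} ``The same argument with $\varphi_g$'' is not literal, since $\varphi$ is not of the form $\Hc^{-1}i^*j_*(-\otimes E)$. The paper uses Lemma~\ref{L4.8}, which expresses $\varphi_f\M$ through the square formed by $j_!j^*\M$, $\M$, $j_!(j^*\M\otimes f^*E_k)$ and $j_*(j^*\M\otimes f^*E_k)$. It applies this to $j_!j^*\M\to\M\to j_*j^*\M$, together with $\psi_{\ne1}=\varphi_{\ne1}$. Again only $j_*$, $j_!$ and twists occur, so the same commutation settles it.

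\textbf{Unneeded steps.} You do not need a separate weight check via \eqref{3.4}--\eqref{3.6}, nor the $V$-filtration fallback. Once the comparison is built from these functorial morphisms, it is automatically a morphism in $D^b\MHM(X,A;T_s,N)$. Only isomorphy remains to be verified, and that is done on underlying complexes.
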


\begin{proof}
It is enough to show \eqref{4.23} and the first isomorphism of \eqref{4.24}.
The assertion is reduced to the commutativity of $\psi_{p}$ with $\psi_{g}$, $\varphi_{g}$ on $q^*\M[1]$ as in the proof of Lemma~\ref{L2.1}.
Let $h:=(g,p):D_XY\to S{\times}S$ so that $h^{-1}(S{\times}\{0\}) =C_XY$ and $h^{-1}(\{0\}{\times}S)=X{\times}S$.
Let $j_1:S^*{\times}S\into S{\times}S$ and $j_2:S{\times}S^*\into S{\times}S$ be the natural inclusions.
We denote their base changes by the same symbols.
Let $\M'$ be the restriction of $q^*\M[1]$ to $h^{-1}(S^*{\times}S^*)$, and $K'$ its underlying $A$-complex.
Then for a local system $L$ on $S^*{\times}S^*$, we can verify the canonical isomorphism
\begin{equation*}
j_{b!}j_{a*}(K'\otimes h^*L)\simto j_{a*}j_{b!}(K'\otimes h^*L)
\end{equation*}
for $(a,b)=(1,2),(2,1)$ using Lemma~\ref{L1.2}.
The assertion follows from Lemma~\ref{L3.2} and the next lemma applied to $j_!j^*\M\to\M\to j_*j^*\M$, since $\psi_{f,\ne 1}=\varphi_{f,\ne 1}$.
\end{proof}

\begin{lem} \label{L4.8}
With the notation of Lemma~{\rm \ref{L3.2}}, we have a canonical isomorphism of $\varphi_f\M$ with the first cohomology of the single complex associated to
\begin{equation*}
\xymatrix{\M \ar[r] & j_*(j^*\M\otimes f^*E_k)\\ j_!j^*\M \ar[r] \ar[u] & j_!(j^*\M\otimes f^*E_k) \ar[u]}
\end{equation*}
for $\M\in\MHM(Y,A)$ if $k\gg 0$ locally on $X$, where the bidegree of $j_!j^*\M$ is $(0,0)$.
This isomorphism is compatible with the defining isomorphism
\begin{equation*}
C(i^*K\to\psi_fK)\simeq\varphi_fK.
\end{equation*}
\end{lem}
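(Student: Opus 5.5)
The plan is to compute the total complex of the square in two directions, first along the rows and then along the columns. Throughout, $k\gg0$ is fixed locally on $X$ so that $N^{k+1}=0$ on $\psi_f\M$.

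\emph{The columns.} The left column $j_!j^*\M\to\M$ is a complex concentrated in degrees $0,1$, and its cone is $i_*i^*\M$. The right column $j_!(j^*\M\otimes f^*E_k)\to j_*(j^*\M\otimes f^*E_k)$ has cone $i_*\bigl(C(N:\psi_{f,1}\to\psi_{f,1}(-1))\bigr)$ applied to $j^*\M\otimes f^*E_k$, by \cite[2.24]{mhm} exactly as in the proof of Lemma~\ref{L3.2}. So the single complex is the cone of a morphism
\[
i^*\M\to C\bigl(N:\psi_{f,1}(j^*\M\otimes f^*E_k)\to\psi_{f,1}(j^*\M\otimes f^*E_k)(-1)\bigr),
\]
suitably shifted.

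\emph{Identifying the target.} Next I would use the isomorphism $\psi_f(j^*\M\otimes f^*E_k)=\psi_f\M\boxtimes\psi_tE_k$ from Lemma~\ref{L3.2}, with $\lambda=1$. Under it $N$ corresponds to $N\boxtimes id+id\boxtimes N$, and on the unipotent part $T_s\boxtimes T_s=id$. Since $\psi_tE_k=\mopl_{0\les i\les k}A(-i)$ carries a single Jordan block and $k\gg0$, the map $N\boxtimes id+id\boxtimes N$ on $\psi_{f,1}\M\boxtimes\psi_tE_k$ is surjective onto the twisted copy. Its kernel is identified with $\psi_{f,1}\M$ via the projection $\psi_tE_k\to A$, which is the isomorphism in \eqref{3.8}.

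The non-unipotent part needs separate care. It is handled because $\psi_{f,\ne1}\M=\varphi_{f,\ne1}\M$ and the $\ne1$ parts contribute nothing to $i^*$. Concretely, one first decomposes $\M$ using $\lambda$ and $E_k^\lambda$. Alternatively, one observes that for non-unipotent monodromy the cokernel vanishes and $j_!=j_*$ on that summand, so the statement reduces to the $\psi_{f,\ne1}$ part. With $E_k$ alone, as the statement has it, this is presumably meant as the unipotent case, with $\varphi_{f,\ne1}$ entering through the left column.

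\emph{Comparison with $\varphi_f$.} The cone then becomes $C(i^*\M\to\psi_{f,1}\M)$, and the first cohomology of this single complex is $\varphi_{f,1}\M$, by the defining triangle $i^*\to\psi_f\to\varphi_f$ from \cite{mhm}.

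I expect the main obstacle to be compatibility: the morphism $i^*\M\to\psi_{f,1}\M$ obtained from the square must coincide with the canonical specialization morphism. I would check this on the underlying $A$-complexes, where it reduces to the statement $\psi_fK=i^*j_*\rho_*\rho^*j^*K$ together with the inclusion $E_{k,A}\into\rho_*A_{\St^*}$ of Lemma~\ref{L3.2}. The constant section $1\in E_{k,A}$ then maps to the adjunction $\id\to\rho_*\rho^*$. Once compatibility holds on the underlying $A$-complexes, faithfulness of the forgetful functor gives the result in $\MHM$. Strictness then shows that the resulting morphism of mixed Hodge modules is an isomorphism, since it is one on the underlying $A$-complexes.
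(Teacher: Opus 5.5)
\textbf{There is a genuine gap: the surjectivity step is false.} Your reduction of the single complex to (a shift of) the cone of $i^*\M\to i^*j_*(j^*\M\otimes f^*E_k)$ is fine, and so is the identification of $\Ker N$ with $\psi_{f,1}\M$ for $k\gg0$. But the claim that $N\boxtimes\id+\id\boxtimes N$ maps $\psi_{f,1}\M\boxtimes\psi_tE_k$ \emph{onto} its twist is false. It is a nilpotent endomorphism, up to Tate twist, of an object that is nonzero whenever $\psi_{f,1}\M\ne0$, so it is never surjective.

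Write $\psi_tE_k=\bigoplus_{0\le i\le k}A(-i)\,e_i$ with $Ne_i=e_{i-1}$ and $Ne_0=0$. Then the cokernel is identified with $\psi_{f,1}\M(-k-1)$ via $\sum_it_i\otimes e_i\mapsto\sum_i(-N)^{k-i}t_i$. Hence:
\begin{itemize}
\item $\Hc^0i^*j_*(j^*\M\otimes f^*E_k)\ne0$ for every $k$;
\item the single complex has nonzero $H^2$;
\item the single complex is \emph{not} $C(i^*\M\to\psi_{f,1}\M)$ up to shift.
\end{itemize}

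Your conclusion for $H^1$ survives only because of a fact you do not prove: the induced map $\Hc^0i^*\M\to\Hc^0i^*j_*(j^*\M\otimes f^*E_k)$ is zero. It factors through the map $\Coker(N|_{\psi_{f,1}\M})\to\Coker(N\boxtimes\id+\id\boxtimes N)$, $v\mapsto v\otimes e_0$. In the description above this is $v\mapsto(-N)^kv$, so it vanishes once $N^k=0$. This is a second, independent use of $k\gg0$, and it is the real content of the lemma. With it, the morphism factors (uniquely, for degree reasons) through $\tau_{\le-1}\cong\psi_{f,1}\M[1]$, and the octahedral axiom shows that the cokernel does not affect $H^1$.

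\textbf{Two further points.}

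First, the square only sees unipotent parts: $i^*\M=C(\can:\psi_{f,1}\M\to\varphi_{f,1}\M)$ and $\psi_{f,1}(j^*\M\otimes f^*E_k)=\psi_{f,1}\M\boxtimes\psi_tE_k$. So $H^1$ is $\varphi_{f,1}\M$, and nothing of $\varphi_{f,\ne1}$ ``enters through the left column''. The lemma must be read for $\varphi_{f,1}$, which is how Lemma~\ref{L4.7} uses it.

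Second, your last step appeals to faithfulness for morphisms $i^*\M\to\psi_{f,1}\M[1]$ in $D^b\MHM(X,A)$. There the forgetful functor is \emph{not} faithful: a class in $\mathrm{Ext}^1_{\MHM}$ can vanish on the underlying $A$-complexes.

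\textbf{A fix that avoids both problems.} Stay in the abelian category. The complex $j^*$ of the single complex is acyclic, so its $H^1$ is supported on $X$. Apply the exact functor $\varphi_{f,1}$, using $\varphi_{f,1}j_!\simeq\psi_{f,1}$ via $\can$ and $\varphi_{f,1}j_*\simeq\psi_{f,1}(-1)$ via $\Var$. This turns it into the complex in $\MHM(X,A)$
\[
\psi_{f,1}\M\to\varphi_{f,1}\M\oplus(\psi_{f,1}\M\boxtimes\psi_tE_k)\to(\psi_{f,1}\M\boxtimes\psi_tE_k)(-1),
\]
with differentials, up to sign, $v\mapsto(\can\,v,\,v\otimes e_0)$ and $(w,u)\mapsto\Var(w)\otimes e_0-(N\boxtimes\id+\id\boxtimes N)u$.

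For $k\gg0$, the map $w\mapsto\bigl(w,\sum_{i\ge1}(-N)^{i-1}\Var(w)\otimes e_i\bigr)$ is a morphism of mixed Hodge modules and induces $\varphi_{f,1}\M\simto H^1$. Its compatibility with $C(i^*K\to\psi_fK)\simeq\varphi_fK$ can then legitimately be checked on underlying perverse sheaves, where the forgetful functor is faithful. The paper itself gives no argument here and simply refers to \cite{ext}.
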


\begin{proof}
This is shown for instance in \cite{ext}.
This finishes the proof of Lemma~\ref{L4.8} and Lemma~\ref{L4.7}.
\end{proof}

\section{Thom-Sebastiani theorem for mixed Hodge modules} \label{S5}

Let $X_a,Y_a,X,Y,\Yt$ be as in Theorem~\ref{T1.5}. For $\M_a\in\MHM(X_a;T_s,N)$ with $a=1,2$, we define the {\it twisted exterior product}
\begin{equation*}
\M_1\buildrel T\over\boxtimes\M_2\in\MHM(X;T_s,N),
\end{equation*}
so that it corresponds by \eqref{4.19} to the functor
\begin{equation*}
\pi_*\ssc\boxtimes:\MHM(Y_1,A)_{\mon,!}{\times}\MHM(Y_2,A)_{\mon,!}\to\MHM(Y,A)_{\mon,!},
\end{equation*}
which is induced by the composition (see Proposition~\ref{P5.1} below):
\begin{equation*}
\MHM(Y_1,A)_{\mon,!}{\times}\MHM(Y_2,A)_{\mon,!}\buildrel\boxtimes\over\longrightarrow\MHM(\Yt,A)\buildrel{\pi_*}\over\longrightarrow D^b\MHM(Y,A).
\end{equation*}
We have a more explicit definition of $\buildrel T\over\boxtimes$ as follows.
Let
\begin{equation*}
\M_a=(M_a,F,K_a,W;T_s,N)\q\hbox{with}\,\,\,\gamma_a:\DR(M_a)\simto K_a\otimes\C.
\end{equation*}
For $-1<\alpha\les 0$, let $M^\alpha_a=\Ker(T_s-\exp(-2\pi i\alpha))\subset M^\alpha_a$ (same for $K_a\otimes\C)^\alpha )$.
We define
\begin{equation*}
\M_1\buildrel T\over\boxtimes\M_2=(M,F,K,W;T_s,N)
\end{equation*}
by $M=M_1\boxtimes M_2,\ K=K_1\boxtimes K_2,T_s=T_s\boxtimes T_s,\ N=N\boxtimes id + id\boxtimes N$, and
\begin{equation} \label{5.1}
F_p(M_1^\alpha\boxtimes M_2^\beta)=\begin{cases}\sum_{i+j=p+1}(F_iM_1^\alpha\boxtimes F_jM_2^\beta)&\h{if}\,\,\,\alpha +\beta\les -1,\\ \sum_{i+j=p}(F_iM_1^\alpha\boxtimes F_jM_2^\beta)&\h{if}\,\,\,\alpha +\beta> -1,\\ \end{cases}
\end{equation}
\begin{equation} \label{5.2}
W_k(M_1^\alpha\boxtimes M_2^\beta)=\begin{cases}\sum_{i+j=k}(W_iM_1^\alpha\boxtimes W_jM_2^\beta) &\h{if}\,\,\,\alpha\beta=0,\\ \sum_{i+j=k-1}(W_iM_1^\alpha\boxtimes W_jM_2^\beta) &\h{if}\,\,\,\alpha\beta\ne 0,\,\alpha +\beta\ne -1,\\ \sum_{i+j=k-2}(W_iM_1^\alpha\boxtimes W_jM_2^\beta) &\h{if}\,\,\,\alpha +\beta=-1,\\ \end{cases}
\end{equation}
(same for $W_k((K_1\otimes\C)^\alpha\boxtimes (K_2\otimes\C)^\beta))$ where $\alpha,\beta\in(-1,0]$.
Here $W$ on $K$ is defined over $A$ by using the Galois group.
The isomorphism
\begin{equation*}
\gamma:\DR(M_1\boxtimes M_2)\simto (K_1\otimes\C)\boxtimes (K_2\otimes\C)
\end{equation*}
is given by the composition of $\gamma_1\boxtimes\gamma_2$ with an isomorphism
\begin{equation*}
\Bt:(K_1\otimes\C)\boxtimes (K_2\otimes\C)\simto(K_1\otimes\C)\boxtimes (K_2\otimes\C),
\end{equation*}
defined by
\begin{equation} \label{5.3}
\Bt(u\boxtimes v)=\begin{cases}B_{\alpha-N,\beta-N}(u\boxtimes v)\q\q\q&\h{if}\,\,\,\alpha+\beta\les -1,\\ (\alpha{+}\beta{+}1{-}N)B_{\alpha-N,\beta-N}(u\boxtimes v)&\h{otherwise},\\ \end{cases}
\end{equation}
where we define for $u\boxtimes v\in (K_1\otimes\C)^\alpha\boxtimes(K_2\otimes\C)^\beta$
\begin{equation*}
B_{\alpha-N,\beta-N}(u\boxtimes v):=
\msum_{i,j\ges 0}\,\hbox{$\frac{1}{i!j!}$}\,B^{i,j}(\alpha,\beta)((-N)^iu\boxtimes (-N)^jv),
\end{equation*}
with
\begin{equation*}
B^{i,j}(\alpha,\beta):=\rd_{\alpha}^i\rd_{\beta}^jB(\alpha,\beta),\q B(\alpha,\beta):=\int_0^1x^{\alpha}(1{-}x)^{\beta}dx.
\end{equation*}
Note that the latter is the beta function up to the shift of $\alpha$, $\beta$ by 1.
In \eqref{5.3}, $N$ acts on $(K_1\otimes\C)\boxtimes(K_2\otimes\C)$ by $N\boxtimes\id+\id\boxtimes N$.
We define $(M_1;T_s,N)\buildrel T\over\boxtimes(M_2;T_s,N)$ for analytic spaces $X_1,X_2$ and for $(M_a;T_s,N)\in\MHM (X_a;T_s,N)$ using local embeddings of analytic spaces into complex manifolds, see \cite{mhp}, \cite{mhm}.
\ms
The following proposition shows that the above two definitions of $\buildrel T\over\boxtimes$ coincide.

\begin{prop} \label{P5.1}
With the notation of Proposition~{\rm \ref{P2.2}}, let $\M_a\in\MHM(Y_a,A)_{\mon,!}$.
Then
\begin{equation*}
\pi_*(\M_1\boxtimes\M_2)\in\MHM(Y,A)_{\mon,!}\,\,\,\hbox{{\rm (}resp.}\,\,\,D^b(\MHM(Y,A)_{\mon,!})),
\end{equation*}
and we have, with the second definition of $\buildrel T\over\boxtimes$, a canonical isomorphism
\begin{equation} \label{5.4}
(\psi_p\pi_*(\M_1\boxtimes\M_2);T_s,N)\simeq (\psi_{p_1}\M_1;T_s,N)\buildrel T\over\boxtimes (\psi_{p_2}\M_2;T_s,N)
\end{equation}
in $\MHM(X,A;T_s,N)$ which is compatible with {\rm \eqref{2.10}}.
Moreover, it naturally extends to an isomorphism in $D^b\MHM(X,A;T_s,N)$ for $\M_a\in D^b(\MHM (Y_a,A)_{\mon,!})$.
\end{prop}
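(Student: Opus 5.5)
Our plan is to reduce to generators via the structure theory of $\MHM(Y_a,A)_{\mon,!}$ from Section~\ref{S4}, then compute both sides explicitly. By Lemma~\ref{L4.4}, $\psi_{p_a}$ gives an equivalence $\MHM(Y_a,A)_{\mon,!}\simto\MHM(X_a,A;T_s,N)$. The inverse $\eta$ is constructed from $\M'_a\boxtimes E^{\lambda}_k$ by taking a kernel and applying $j_!$. Since everything is exact and compatible with the decomposition by eigenvalues of $T_s$ (Lemma~\ref{L3.1}), we may assume that $\M_a=\eta(\M'_a)$ with $T_s$ having eigenvalues $\lambda_a$ on $\M'_a$ and $N^{k+1}=0$.

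First we show $\pi_*(\M_1\boxtimes\M_2)\in\MHM(Y,A)_{\mon,!}$. Monodromicity of the underlying $A$-complex and the vanishing $i^*=0$ on the zero section follow from Proposition~\ref{P2.2}. Extendability over $\Yh$ holds because $\M_a$ extends over $\Yh_a$ by \eqref{4.9} and $\pi$ extends to a proper map after blowing up. Proposition~\ref{P4.3} then gives monodromicity as mixed Hodge modules. That only $\Hc^0\pi_*$ survives can be checked on the underlying $A$-complexes, using the computation in the proof of Proposition~\ref{P2.2}: there $\pi_*$ is reduced via Lemma~\ref{L2.3} to a single restriction $i_P^*[-1]$, which is perverse up to the shift absorbed in $\psim$.

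Next we construct the morphism \eqref{5.4}. Using the proof of Proposition~\ref{P2.2}, we blow up the zero section of $\Yt$, restrict to a transversal fibre $H$, and apply Lemma~\ref{L2.3}. This gives $\psi_p\pi_*(\M_1\boxtimes\M_2)\simeq\Hc^{-1}\pi'_*$ of a module on $X{\times}L$. By Lemma~\ref{L3.2} and the multiplication $E^{\lambda_1}_k\otimes E^{\lambda_2}_{k}\to\mopl E^{\lambda''}_{2k}$ used in Proposition~\ref{P3.3}, the question reduces to the model case $X_a=pt$, with $\M_a=j_!E^{\lambda_a}_k[1]$ on $S$. There the underlying local system is identified with \eqref{2.10}. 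To upgrade to mixed Hodge modules, it suffices to show that the $\D$-module isomorphism $\gamma$ induced on $\pi_*(j_!E_1\boxtimes j_!E_2)$ is $\gamma_1\boxtimes\gamma_2$ twisted by $\Bt$ of \eqref{5.3}, and that $F$ and $W$ are given by \eqref{5.1}--\eqref{5.2}.

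The key computation is on the fibre $\{t_1+t_2=1\}$. Take the multivalued flat sections $t_1^{\alpha}(\log t_1)^i$ and $t_2^{\beta}(\log t_2)^j$. Integrating along the segment $L=\,]0,1[$, parametrized by $t_1=x$, $t_2=1-x$, produces $\int_0^1x^{\alpha}(1-x)^{\beta}dx$ and its $\alpha,\beta$-derivatives. Replacing $\alpha,\beta$ by $\alpha-N,\beta-N$ gives exactly $B_{\alpha-N,\beta-N}$. The factor $(\alpha+\beta+1-N)$ and the shift of $F$ by one when $\alpha+\beta>-1$ come from normalizing the section in $V^{\alpha+\beta+1}$ versus $V^{\alpha+\beta}$ of the direct image, i.e.\ from applying $s\rd_s$ once. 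The weight shifts in \eqref{5.2} are computed via Lemma~\ref{L2.3} and the purity of $\pi_*$ of pure objects: shift $0$ if a factor is unipotent, $1$ generically, $2$ when $\alpha+\beta=-1$ (the $\pi_*$ of $\Hc^1(\PP^1\setminus 3\,\mathrm{pts})$-type piece becomes $\Hc^2$-type).

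Finally, compatibility with \eqref{2.10} holds by construction. The extension to $D^b(\MHM(Y_a,A)_{\mon,!})$ follows because both sides of \eqref{5.4} are exact bifunctors, and the isomorphism is functorial, so it applies termwise to complexes.

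The main obstacle is the precise matching of the Hodge filtration and of $\Bt$, especially the $\alpha+\beta>-1$ versus $\le -1$ dichotomy. I would settle it by an explicit $V$-filtration computation of $\int_\pi$ on $\C^2$ in the rank-one, $N=0$ case, and then deduce the general case from the functoriality in $E_k$ via the $\log$-derivatives.
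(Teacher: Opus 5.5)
Your treatment of the $A$-structure is essentially the paper's: integration over $L=(0,1)\subset\{t_1{+}t_2=1\}$, with the $\log$-derivatives producing the $B^{i,j}$. One correction there: the factor $(\alpha{+}\beta{+}1{-}N)$ comes from $\rd_t\colon\Mo'^{\alpha+1}\simto\Mo'^{\alpha}$, not from $s\rd_s$, which does not move the $V$-index. Reducing first to the model $j_!E^{\lambda}_k[1]$ via the quasi-inverse $\eta$ of Lemma~\ref{L4.4} is a workable alternative route. It does require you to check exactness of $\buildrel T\over\boxtimes$, and naturality of the model isomorphism in the endomorphisms $T_s^{\pm1},N$ of $E^\lambda_k$ used to cut out $\eta$.

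\textbf{The Hodge filtration.} Your plan is to compute the rank-one, $N=0$ case and then deduce the rest ``by functoriality in $E_k$ via the $\log$-derivatives''. This does not work. The $\log$-derivatives only control $\Bt$, and agreement on the $\Gr^W$ pieces does not determine $F$. A linear isomorphism respecting $W$ that is a filtered isomorphism on every $\Gr^W$ need not respect $F$; compare a non-split extension of $A(0)$ by $A(1)$ with the split one.

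What you are missing is the one-sided inclusion \eqref{5.9}, valid for arbitrary non-pure $\M_a$: the canonical $\D$-module isomorphism \eqref{5.6} sends the shifted convolution filtration \eqref{5.1} into $F$. The paper obtains it as follows. Restrict $dt_1\otimes(m_1\boxtimes m_2)$ to $\{t_1{+}t_2=1\}$ and extend over the compactification of the fibres of $\pi$; it lies in $V^{-\alpha_1-\alpha_2-2}$ at infinity because $\Omega^1_{\PP^1}\simeq\Oc_{\PP^1}(-2)$. Then use $F_p\DR_{\Yt/Y}M=C(F_pM\to\Omega^1_{\Yt/Y}\otimes F_{p+1}M)$. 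This gives $[dt_1\otimes(m_1\boxtimes m_2)]\in F_{p-1}M'$ for $\alpha_1{+}\alpha_2\les-1$, and $\rd_t[dt_1\otimes(m_1\boxtimes m_2)]\in F_pM'$ otherwise. So the index shift $p{+}1$ in \eqref{5.1} belongs to the case $\alpha{+}\beta\les-1$; for $\alpha{+}\beta>-1$ the $\rd_t$ exactly cancels the shift coming from the relative de Rham complex.

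Only once this filtered morphism exists can exactness and strictness reduce you to pure objects. Then \eqref{4.14} and non-characteristic restriction reduce further to rank-one variations over a point. There a dimension count with the Deligne extension, $L=\Oc_{\PP^1}(-1)$ or $\Oc_{\PP^1}(-2)$, turns the inclusion into an equality.

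\textbf{The weights.} There is no ``purity of $\pi_*$ of pure objects'' here. The map $\pi$ has fibres $\C$ and is not proper, and the shifts $0,1,2$ in \eqref{5.2} measure exactly this failure of purity. Lemma~\ref{L2.3} is a statement about $A$-complexes and cannot see weights.

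The paper instead uses $\M'|_{\{t=1\}}=\Hc^0\pi'_*j'_!\M''$, together with the vanishing \eqref{5.10} for $\M''$, $W_k\M''$ and $\Gr^W_k\M''$, to define the induced filtration $\Wt$. It then reduces to a pure rank-one $\M''$ over a point. Finally it reads off $\Gr^W_k\Gr^{\Wt}_{k'}$ from the weight spectral sequence of $j_{\infty*}j'_!\M''$ on $\PP^1$. The local monodromies are $e^{-2\pi i\alpha_1}$ and $e^{-2\pi i\alpha_2}$ at $0,1$, and the monodromy at $\infty$ is trivial exactly when $\alpha_1{+}\alpha_2\in\Z$.

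Two smaller points. First, there are no mixed Hodge modules on $X{\times}L$; the relevant space is the complex line $X{\times}\{t_1{+}t_2=1\}$ with $j'_!$ along $t_1t_2=0$. Second, the multiplication of the $E^\lambda_k$ from Proposition~\ref{P3.3} belongs to the fibre-product setting of Proposition~\ref{P1.1}, not to the sum map, so it plays no role in your reduction.
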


\begin{proof}
The functor real in \cite{BBD} commutes with $\psi_p,\pi_*$, etc.
and it is enough to show the assertion in the case $\M_a\in\MHM(Y_a,A)_{\mon,!}$.
Then we may assume $X_1,X_2$ smooth.
Put $\M=\M_1\boxtimes\M_2$ and $\M'=\pi_*\M$.
The first assertion $\M'\in\MHM(Y,A)_{\mon,!}$ follows from Proposition~\ref{P2.2}.
Let $M$, $M'$, $M_a$ be the underlying $\D$-modules of $\M$, $\M'$, $\M_a$.
In the notation of Section~\ref{S4}, $(\pi_*M)^{\alpha}$ will be denoted by $\Mo^{\alpha}$, since $\pi$ in this paragraph denotes the projection of $\Yt$ to $Y$.
Then we have
\begin{equation*}
[M_a]=\mopl_{\alpha}\,\Mo_a^{\alpha},\q[M]=\mopl_{\alpha}\,\Mo^{\alpha},\q[M']=\mopl_{\alpha}\,\Mo'^{\alpha},
\end{equation*}
and
\begin{equation*}
[M]=[M_1]\boxtimes[M_2],\q\Mo^{\alpha}=\motim_{\alpha_1+\alpha_2=\alpha}\,\Mo^{\alpha_1}_1\boxtimes\Mo^{\alpha_2}_2.
\end{equation*}
By definition, $\pi_*M=\pi_*\DR_{\Yt/Y}M$, where
\begin{equation*}
\DR_{\Yt/Y}M=C(d:M\to\Omega_{\Yt/Y}^1\otimes M),
\end{equation*}
and $\Omega_{\Yt/Y}^1$ is trivialized by $dt_1=-dt_2$ (mod $\pi^*\Omega_Y^1)$.
So we have
\begin{equation} \label{5.5}
[M']=\Coker(\rd_{t_1}{-}\,\rd_{t_2}: [M_1]\boxtimes[M_2]\to[M_1]\boxtimes[M_2]),
\end{equation}
where $\Mo^{\alpha_1}_1\boxtimes\Mo^{\alpha_2}_2$ in the cokernel of $\rd_{t_1}{-}\,\rd_{t_2}$ is sent to a subsheaf of $\Mo'^{\alpha_1+\alpha_2+1}$ by the isomorphism.
Since $\rd_{t_a}:\Mo^{\alpha}_a\simto\Mo^{\alpha-1}_a$, we get a canonical isomorphism for $\alpha\in (-1,0]$:
\begin{equation} \label{5.6}
\Mo'^{\alpha}=\Biggl(\,\bigoplus_{\substack{\scriptstyle\alpha_1,\alpha_2\in (-1,0]\\ \scriptstyle\alpha_1+\alpha_2+1=\alpha}}\Mo^{\alpha_1}_1\boxtimes\Mo^{\alpha_2}_2\Biggr)\bigoplus\Biggl(\,\bigoplus_{\substack{\scriptstyle\alpha_1,\alpha_2\in (-1,0]\\ \scriptstyle\alpha_1+\alpha_2=\alpha}}\Mo^{\alpha_1}_1\boxtimes\Mo^{\alpha_2}_2\Biggr).
\end{equation}
Here we use in case $\alpha_1+\alpha_2=\alpha$ the following isomorphism
\begin{equation*}
\rd_t:\Mo'^{\alpha+1}\simto\Mo'^{\alpha},
\end{equation*}
where $t$ is the affine coordinate of $Y$ over $X$ such that $\pi^*t=t_1{+}t_2$.
So we get the isomorphism \eqref{5.4} for the underlying $\D$-modules.
With the notation of Section~\ref{S4}, set
\begin{equation*}
{}^{\theta}\!M_a:=\Ker(t_a\rd_{t_a}:M_a|_{Y^*_a}\to M_a|_{Y^*_a}).
\end{equation*}
Then ${}^{\theta}\!M_a$ is monodromical, i.e.
the restriction of ${}^{\theta}\!M_a$ to $\{x\}{\times}S^*$ is locally constant.
We have a canonical isomorphism
\begin{equation} \label{5.7}
\lambda:\mopl_{\alpha\in (-1,0]}\,\Mo^{\alpha}_a\simto {}^{\theta}\!M_a|_{X_a{\times}\{1\}}=\psi_{p_a}({}^{\theta}\!M_a)
\end{equation}
such that the monodromy of ${}^{\theta}\!M_a$ corresponds to $\exp(2\pi i(-\alpha+N))$ with $N=-(t_a\rd_{t_a}-\alpha)$ on $\Mo^{\alpha}_a$, where the last isomorphism is shown as in \eqref{2.2}.
Indeed, for $\alpha\in (-1,0]$, we have a natural morphism
\begin{equation} \label{5.8}
{}^{\theta}\!M_a|_{X_a{\times}\{1\}}=\psi_{p_a}({}^{\theta}\!M_a)\to(\pi_a)_*(i_a)_*(\Oc_{Y_a^*}\otimes_{\pi_a^{-1}\Oc_{X_a}}{}^{\theta}\!M_a)
\end{equation}
which is defined by
\begin{equation*}
{}^{\theta}\!M_a|_{X_a{\times}\{1\}}\ni u\mapsto t_a^{\alpha-N}(u),
\end{equation*}
if $u$ is an eigenvector of the semisimple part of the monodromy of ${}^{\theta}\!M_a$ with eigenvalue $\exp(-2\pi i\alpha)$.
Moreover, \eqref{5.8} is naturally factorized by
\begin{equation*}
\Mo^{\alpha}_a\to(\pi_a)_*M_a\to(\pi_a)_*(i_a)_*(\Oc_{Y_a^*}\otimes_{\pi_a^{-1}\Oc_{X_a}}{}^{\theta}\!M_a),
\end{equation*}
to induce the inverse of \eqref{5.7}, where
\begin{equation*}
t_a^{\alpha-N}u:=
\msum_{k\ges 0}\,t_a^{\alpha}(\log t_a)^k(-N)^k u/k!.
\end{equation*}
Taking $\DR_{X_a}$, the isomorphism \eqref{5.7} induces the isomorphism
\begin{equation*}
\DR_{X_a}\psi_{p_a}\simeq\psim_{p_a}\DR_{Y_a}.
\end{equation*}
By \eqref{5.7}--\eqref{5.8}, we can compare the isomorphisms \eqref{2.10} and \eqref{5.6}, and get the twist of the $A$-structure \eqref{5.3}, since the trace morphism $Tr:\pi''_!A_{X{\times}L}\simto A_X[-1]$ in Lemma~\ref{L2.3} is expressed by the integration of one-forms along $L$.
Indeed, we restrict to $\pi^{-1}(X{\times}\{1\})$ the direct image $\pi_*M=\pi_*\DR_{\Yt/Y}M$ which is essentially expressed by \eqref{5.5}, and then take $\DR_X$.
It has a natural morphism to $\pi_*\DR_{\Yt}M|_{X{\times}\{1\}}$, and its composition with the trace morphism is given by the integration of one-forms as above.
The beta function appears from the integration over $L=(0,1)$ contained in $\{t_1{+}t_2=1\}$ by using the above formula for $t_a^{\alpha-N}u$ together with
\begin{equation*}
\rd_{\alpha}x^{\alpha}=(\log x)\,x^{\alpha},\q\rd_{\beta}(1{-}x)^{\beta}=\log (1{-}x)\,(1{-}x)^{\beta},
\end{equation*}
where $x=t_1=1-t_2$.
In case $\alpha_1+\alpha_2>-1$, the action of $\alpha_1+\alpha_2+1-N$ on $B_{\alpha-N,\beta-N}(u\boxtimes v)$ comes from the isomorphism after \eqref{5.6}, since $[dt_1\otimes(m_1\boxtimes m_2)]\in M'$ for $m_a\in\Mo_a^{\alpha_a}$ is expressed as $t^{\alpha_1+\alpha_2+1-N}u'$ with $u'$ a locally constant multivalued section, and we have
\begin{equation*}
\rd_t(t^{\alpha_1+\alpha_2+1-N}u')= (\alpha_1{+}\alpha_2{+}1{-}N)(t^{\alpha_1+\alpha_2-N}u').
\end{equation*}
For the shift of the Hodge filtration \eqref{5.1}, it is enough to show that the isomorphism \eqref{5.6} is compatible with the Hodge filtration $F$ up to the shift as in \eqref{5.1}.
We first show the inclusion
\begin{equation} \label{5.9}
F_p\Mo'^{\alpha}\supset\Biggl(\,\bigoplus_{\substack{\scriptstyle\alpha_1,\alpha_2\in (-1,0],\\ \scriptstyle\alpha_1+\alpha_2+1=\alpha}}F_{p+1}(\Mo^{\alpha_1}_1\boxtimes\Mo^{\alpha_2}_2)\Biggr)\bigoplus\Biggl(\,\bigoplus_{\substack{\scriptstyle\alpha_1,\alpha_2\in (-1,0],\\ \scriptstyle\alpha_1+\alpha_2=\alpha}}F_p(\Mo^{\alpha_1}_1\boxtimes\Mo^{\alpha_2}_2)\Biggr).
\end{equation}
Since $\alpha>-1$, it is enough to show the inclusion at $X{\times}\{t=1\}$.
By definition, the Hodge filtration $F$ on $M'$ is obtained by using the natural compactification of $\pi:\Yt\to Y$, where $(M,F)$ is extended to the compactification using the filtration $V$ along the divisor at infinity (see \cite[3.2.3.2]{mhp}), where $V_0$ corresponds to $V^{-1}$ in this note).
The restriction of
\begin{equation*}
dt_1\otimes(m_1\boxtimes m_2)\in\Omega_{\Yt/Y}^1\otimes F_p(\Mo^{\alpha_1}_1\boxtimes\Mo^{\alpha_2}_2)\q\hbox{to}\q X{\times}\{t_1{+}t_2=1\}
\end{equation*}
is naturally extended over the compactification, and belongs to $V^{-\alpha_1-\alpha_2-2}$ since $\Omega_{\PP^1}^1\simeq\Oc_{\PP^1}(-2)$.
By [loc. cit.], we get
\begin{equation*}
\begin{array}{cccccccccccc}[dt_1\otimes(m_1\boxtimes m_2)]\in F_{p-1}M'\hfill&{\rm if}\q\alpha_1+\alpha_2\les -1\hfill\\ \rd_t[dt_1\otimes(m_1\boxtimes m_2)]\in F_{p}M'\hfill& {\rm otherwise.}\raise10pt\hbox{}\hfill\\ \end{array}
\end{equation*}
Indeed, the filtration $F$ on $\DR_{\Yt/Y}M$ is defined by
\begin{equation*}
F_p\DR_{\Yt/Y}M=C(d:F_pM\to\Omega_{\Yt/Y}^1\otimes F_{p+1}M),
\end{equation*}
and we have
\begin{equation*}
\rd_t[dt_1\otimes(m_1\boxtimes m_2)] =[dt_1\otimes(\rd_{t_1}m_1\boxtimes m_2)] =[dt_1\otimes(m_1\boxtimes\rd_{t_2}m_2)],
\end{equation*}
where $\rd_{t_1}$ has zero of order $2$ at infinity.
So we get \eqref{5.9}.
Since the functor $(M_1,M_2)\to M'$ is exact for both factors, we may assume $M_1,M_2$ pure by \eqref{5.9}.
Then the assertion is reduced to the case of variations of Hodge structures, since the Hodge filtration of a pure Hodge module (with strict support) is determined to its restriction to a dense Zariski-open subset of the support, see \cite[3.2.2]{mhp}.
In this case, we may assume also $X_1=X_2=\pt$ using the non-characteristic pull-backs by the base change by $\{x_1\}{\times}\{x_2\}\to X_1{\times}X_2$, and its commutativity with the direct images.
Since we are considering only the underlying filtered $\D$-modules, we may assume that $M_a|{S^*}$ is a line bundle with integrable connection by \eqref{4.14}.
Here the Hodge filtration on $M_a|{S^*}$ is trivial, i.e.
$\Gr_p^F M_a|{S^*}=0$ for $p\ne p_a$.
Let $\exp(-2\pi i\alpha_a)$ be the monodromy of $M_a|{S^*}$ with $\alpha_a\in (-1.0]$.
By \eqref{5.6} it is enough to compare the dimension of the Hodge filtration on both sides of \eqref{5.6}.
For $\alpha\in (-1,0]$ such that $\alpha_1+\alpha_2-\alpha\in\Z$, we have by Proposition~\ref{P4.3}
\begin{equation*}
\dim F_p\Mo'^{\alpha}=\dim F_p\Mo'=\dim F_p\Mo'|_{\{t=1\}}
\end{equation*}
Let $S'=S\setminus\{0,1\}$, and $L'$ the restriction of $M$ to $\{t_1{+}t_2=1\}\setminus\{t_1t_2=0\}\simeq S'$.
Then $L'$ is a line bundle with integrable connection, and its monodromies of around $0,1$ are $\exp(-2\pi i\alpha_1)$, $\exp(-2\pi i\alpha_2)$.
Let $\bar S=\PP^1$ with $j_{\infty}:S\to\bar S$.
Let $L$ be Deligne's extension of $L'$ to $\bar S$ such that the eigenvalues of the residue of the connection at $0,1$ (resp. $\infty$) are contained in $(0,1]$ (resp. $[0,1)$).
It is obtained by restricting $j_{\infty *}(M|_{S{\times}\{t=1\}})$ to $V^{>0}$ (resp. $V^0$) at $0,1$ (resp. $\infty$).
By \cite[3.11.2]{mhm}, the direct image $(M',F)|_{\{t=1\}}$ is calculated by the hypercohomology of the filtered complex
\begin{equation*}
K=C(L\to\Omega_{\bar S}^1(\log\Sigma)\otimes L),\q F_pK=C(F_p L\to\Omega_{\bar S}^1(\log\Sigma)\otimes F_{p+1}L),
\end{equation*}
where $\log\Sigma=\{0,1,\infty\}$ and $F_p L=L$ if $p\ges p':= p_1+p_2$ and zero otherwise, i.e.
the filtration on $K$ is the filtration $\sigma_{\ges p}$ (see \cite{D2}) up to a shift.
We have
\begin{equation*}
\begin{array}{cccccccccccc}L=\Oc_{\PP^1}(-1),\hfill&\Omega_{\bar S}^1(\log\Sigma)\otimes L=\Oc_{\PP^1}(0)\hfill&\hbox{\rm if}\q\alpha_1+\alpha_2\les -1,\hfill\\ L=\Oc_{\PP^1}(-2),\hfill&\Omega_{\bar S}^1(\log\Sigma)\otimes L =\Oc_{\PP^1}(-1)\hfill&{\rm otherwise.}\raise10pt\hbox{}\hfill\\ \end{array}
\end{equation*}
So we get the assertion on the Hodge filtration, see \eqref{5.1}.
Now we show the assertion about the weight filtration, see \eqref{5.2}.
Let
\begin{equation*}
\begin{aligned}
&Y'=X{\times}\{t_1{+}t_2=1\}\,(\simeq X{\times}S),\\ &Y''=Y'\setminus\{t_1t_2=0\},\q\Yo'=X{\times}\PP^1,
\end{aligned}
\end{equation*}
with natural morphisms
\begin{equation*}
\begin{aligned}
&j':Y''\to Y',\q j_{\infty}:Y'\to\Yo',\\ &\pi':Y'\to X{\times}\{t=1\},\q\bar\pi':\Yo'\to X{\times}\{t=1\},
\end{aligned}
\end{equation*}
so that $\bar\pi'j_{\infty}=\pi'$.
Let $\M''$ denote the restriction of $\M$ to $Y''$ (i.e.
$\Hc^{-1}i^*\M|_{Y''}$ with $i:Y'\into\Yt$ the inclusion) so that the restriction of $\M$ to $Y'$ is $j'_!\M''$.
Then
\begin{equation*}
\M'|_{\{t=1\}}=\Hc^0\pi'_*j'_!\M'',
\end{equation*}
and
\begin{equation} \label{5.10}
\Hc^i\pi'_*j'_!\M''=0\q{\rm for}\q i\ne 0.
\end{equation}
So we have a filtration $\Wt$ of $\Hc^0\pi'_*j'_!\M''$ such that
\begin{equation*}
\Wt_k\Hc^0\pi'_*j'_!\M''=\Hc^0\pi'_*j'_!(W_{k+1}\M''),
\end{equation*}
since \eqref{5.10} holds also for $W_k\M''$ and $\Gr_k^W\M''$.
Here the shift of filtration by one comes from the smooth pull-back of relative dimension one.
Let $(M'')^{\alpha_1,\alpha_2}\subset M''$ denote the restriction to $Y''$ of the sub-module of $M$ generated by $M_1^{\alpha_1}\boxtimes M_2^{\alpha_2}$, so that we have a decomposition $M''=\bigoplus_{\alpha_1,\alpha_2\in (-1,0]}(M'')^{\alpha_1,\alpha_2}$.
It is enough to show
\begin{equation*}
\begin{aligned}
&\Gr_k^W\Gr_{k'}^{\Wt}\Hc^0\pi'_*j'_!(M'')^{\alpha_1,\alpha_2}=0\\ &\q\q{\rm for}\,\,\begin{cases}k'\ne k\q\q&\h{if}\q\alpha_1\alpha_2=0,\\ k'\ne k{-}1&\h{if}\q\alpha_1\alpha_2\ne 0,\,\,\alpha_1{+}\alpha_2\ne -1,\\ k'\ne k{-}2&\h{if}\q\alpha_1{+}\alpha_2=-1.\\ \end{cases}
\end{aligned}
\end{equation*}
So we may assume that $M''$ is pure of weight $k'{+}1$, and moreover, it is a line bundle and $X=\pt$ by the same argument as above.
If the monodromy around $0$ or $1$ (resp. $\infty$) is trivial, we have
\begin{equation*}
\Gr_{k'}^W j_{\infty *}j'_!M''\ne 0\q\hbox{(resp.}\,\,\Gr_{k'+2}^Wj_{\infty *}j'_!M''\ne 0),
\end{equation*}
and it is zero otherwise.
Then we can verify the assertion by using the weight spectral sequence.
The detail is left to the reader.
This finishes the proof of Proposition~\ref{P5.1}.
\end{proof}

\begin{prop} \label{P5.2}
In the notation of Proposition~{\rm \ref{P2.4}}, let $\M_a\in D^b(\MHM(Y_a,A)_{\mon})$.
We have $\pi_*(\M_1\boxtimes\M_2)$ in $D^b(\MHM(Y,A)_{\mon})$ compatible with the definition in Proposition~{\rm \ref{P3.4}} by the natural functor $D^b(\MHM(Y,A)_{\mon})\to D^b(\MHM(Y,A))$, and there is an canonical isomorphisms in $D^b(\MHM(X,A;T_s,N)):$
\begin{equation*}
\varphi_p\pi_*(\M_1\boxtimes\M_2)\simeq\varphi_{p_1}\M_1\buildrel T\over\boxtimes\varphi_{p_2}\M_2,
\end{equation*}
\end{prop}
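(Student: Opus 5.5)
The plan follows the proof of Proposition~\ref{P2.4}, now carried out at the level of mixed Hodge modules. Corollary~\ref{C4.6} is the substitute for the decomposition $K_a=[K'_a\to K''_a]$ used there.

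\emph{Step 1: resolution of each factor.} Apply Corollary~\ref{C4.6} to $\M_a$. This gives a quasi-isomorphism $\M_a\to[\M'_a\to\M''_a]$ with $\M'^i_a\in\MHM(Y_a,A)_{\mon,!}$ and $\M''^i_a=\Hc^1\pi_a^*\Hc^{-1}\pi_{a*}\M''^i_a$. The construction is functorial, so the resulting isomorphism will be canonical.

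\emph{Step 2: construction of $\pi_*(\M_1\boxtimes\M_2)$ in $D^b(\MHM(Y,A)_{\mon})$.} Form the total complex of $\pi_*(\M^i_1\boxtimes\M^j_2)$, where each $\pi_*$ is computed by the functorial $\pi_*$-acyclic resolution \eqref{3.11} of Proposition~\ref{P3.4}, or its variant in Remark~\ref{R3.5}. I will check three points.
\begin{enumerate}
\item Each term is monodromical. This follows from Proposition~\ref{P4.3}, since the underlying $A$-complex is monodromical by Proposition~\ref{P2.4}, and extendability \eqref{4.9} is preserved by $\boxtimes$ and proper direct image after compactifying.
\item The terms $\pi_*(\M'^i_1\boxtimes\M'^j_2)$ are actual objects of $\MHM(Y,A)_{\mon,!}$, by Proposition~\ref{P5.1}.
\item The result agrees with the $\pi_*$ of Proposition~\ref{P3.4} under the forgetful functor. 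This holds because both are computed from the same acyclic resolutions, and by faithfulness of the forgetful functor to $A$-complexes, where Proposition~\ref{P2.4} applies.
\end{enumerate}

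\emph{Step 3: the vanishing of mixed terms.} For a factor of the form $\M''^j_2=\Hc^1\pi_2^*\mathcal N$, the object $\M^i_1\boxtimes\M''^j_2$ is pulled back along the $t_2$-direction. Using Lemma~\ref{L2.3}, or directly the projection formula and the contractibility of the fibers, $\pi_*(\M'^i_1\boxtimes\M''^j_2)$ is again of the form $\Hc^1\pi^*(\cdot)$, that is, constant along the fibers of $p$. Such objects have $\varphi_p=0$ in $\MHM(X,A;T_s,N)$. The vanishing can be checked on the underlying $A$-complexes by faithfulness, where it is the statement $\pi_*(K'_1\boxtimes K''_2)=0$ from the proof of Proposition~\ref{P2.4}, up to a term with trivial $\varphi$. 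The same applies to $\M''_1\boxtimes\M''_2$. Since $\varphi_p$ is exact, it follows that
\[
\varphi_p\pi_*(\M_1\boxtimes\M_2)\simto\varphi_p\pi_*(\M'_1\boxtimes\M'_2).
\]

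\emph{Step 4: conclusion via Proposition~\ref{P5.1}.} For $\M'_a$ in $\mon,!$, the morphism $\can:\psi_p\to\varphi_p$ is an isomorphism, as in the proof of Lemma~\ref{L4.4}. By the derived version of \eqref{5.4} in Proposition~\ref{P5.1}, we get
\[
\varphi_p\pi_*(\M'_1\boxtimes\M'_2)\simeq\varphi_{p_1}\M'_1\buildrel T\over\boxtimes\varphi_{p_2}\M'_2 .
\]
Finally, $\varphi_{p_a}\M'_a\simeq\varphi_{p_a}\M_a$, because $\varphi_{p_a}\M''_a=0$.

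\emph{Expected obstacle.} The main difficulty is Step 2. One must make $\pi_*$ functorial at the level of complexes of monodromical mixed Hodge modules, rather than just in $D^b\MHM(Y,A)$, so that the bicomplex makes sense and the identification of Step 3 is an honest quasi-isomorphism. I would handle this with the explicit resolution \eqref{3.11}, choosing the divisors $H_k$ $\C^*$-invariantly so that each term stays monodromical. I would then check canonicity of the final isomorphism by comparing with \eqref{2.10} on underlying $A$-complexes.
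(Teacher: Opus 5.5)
Your Steps 1, 3 and 4 are the paper's proof. The paper reduces by Corollary~\ref{C4.6} to $\M_a=[\M'_a\to\M''_a]$ and disposes of the cross terms as in the proof of Proposition~\ref{P2.4}. It then concludes by Proposition~\ref{P5.1}, using that $\can:\psi_p\to\varphi_p$ is an isomorphism on $\MHM(\cdot,A)_{\mon,!}$ and that $\varphi_{p_a}\M''_a=0$. In Step~3 you can say more than ``of the form $\Hc^1\pi^*(\cdot)$ up to a term with trivial $\varphi$''. In the coordinates $(t_1,t_1{+}t_2)$ the map $\pi$ becomes $\pi_1{\times}{\rm id}$, and $\M'_1\boxtimes\M''_2$ is constant in the second coordinate. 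So $\pi_*(\M'_1\boxtimes\M''_2)$ is built from $\pi_{1*}\M'_1$. This is zero, because $\pi_{1*}\simeq i^*$ on monodromical objects (Section~\ref{S2}) and $i^*\M'_1=0$. Hence only $\pi_*(\M''_1\boxtimes\M''_2)$ is a nonzero pull-back from $X$.

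What does not work is your device for the ``expected obstacle'': choosing the divisors $H_k$ of \eqref{3.11} $\C^*$-invariantly. Every $\C^*$-orbit closure in $\Yt=X{\times}\C^2$ meets the zero section. So a $\C^*$-invariant divisor that meets $\{x\}{\times}\C^2$ at all contains $(x,0)$, and the $H_k$ are in general characteristic along $X{\times}\{0\}$. For a concrete failure, take $\M_a=i_{a*}\mathcal N_a$ supported on the zero sections; these are monodromical. Wherever the $H_k$ are relevant, $\supp(\M_1\boxtimes\M_2)\subset H_1\cap H_2$, so all three terms of \eqref{3.11} vanish and it is not a resolution. With non-invariant divisors such as $\{t_1-t_2=c\}$, $c\ne0$, the resolution is fine, but its terms are not monodromical.

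You do not need such a resolution. As in the paper, the object of $D^b(\MHM(Y,A)_{\mon})$ comes out of the decomposition itself:
\begin{enumerate}
\item By Proposition~\ref{P5.1}, $\pi_*\ssc\boxtimes$ is an exact bifunctor $\MHM(Y_1,A)_{\mon,!}{\times}\MHM(Y_2,A)_{\mon,!}\to\MHM(Y,A)_{\mon,!}$, so it applies termwise to the $\M'$-parts. You already have this as your item (2).
\item The mixed terms vanish.
\item $\pi_*(\M''_1\boxtimes\M''_2)$ is a shift of $\Hc^1\pi^*$ of an object on $X$.
\end{enumerate}

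Finally, in your item (3) you cannot compare two constructions of $\pi_*$ in $D^b\MHM(Y,A)$ ``by faithfulness of the forgetful functor''. That functor is faithful on $\MHM$ but not on $D^b\MHM$. Comparisons have to be made by constructing morphisms of mixed Hodge modules and checking that they are isomorphisms on the underlying objects.
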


\begin{proof}
By Corollary~\ref{C4.6}, we may assume $\M_a=[\M'_a\to\M''_a]\,(a=1,2)$ such that
\begin{equation*}
\M'^{i}_a\in\MHM(Y_a,A)_{\mon,!},\q\M''^{i}_a=\Hc^1\pi^*\Hc^{-1}\pi_*\M''^{i}_a.
\end{equation*}
The assertion then follows from Proposition~\ref{P5.1}.
\end{proof}

\begin{thm} \label{T5.3}
With the notation of Theorem~{\rm \ref{T2.5}}, let $\M_a\in D^b (\MHM(X_a,A))$.
We have a canonical isomorphism in $D^b(\MHM(X;T_s,N)):$
\begin{equation*}
\varphi_f(\M_1\boxtimes\M_2)\simeq\varphi_{f_1}\M_1\buildrel T\over\boxtimes\varphi_{f_2}\M_2.
\end{equation*}
Here we assume that $\varphi_{f_a-c_a}\M_a=0$ for any $c_a\in\C^*$ replacing $X_a$ with a sufficiently small open neighborhood of $f_a^{-1}(0)\subset X_a$ for $a=1$ or $2$.
\end{thm}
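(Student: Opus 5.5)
The strategy is to reproduce at the level of mixed Hodge modules the argument that gave Theorem~\ref{T2.5} from Theorem~\ref{T1.5} and Proposition~\ref{P2.4}. The graph embeddings reduce $\varphi_f$ to a vanishing cycle along the zero section of a line bundle. The Verdier specialization then replaces the objects by monodromical ones, and Proposition~\ref{P5.2} computes the result.

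First I would pass to graphs. Let $i_{f_a}:X_a\into Y_a=X_a{\times}S$ be the graph embedding of $f_a$, and set $\M'_a:=i_{f_a*}\M_a\in D^b\MHM(Y_a,A)$. Then $\supp\M'_a\subset Y'_a=\{f_a=t_a\}$, and $\varphi_{f_a}\M_a=\varphi_{p_a}\M'_a$, where $p_a:Y_a\to S$ is the projection. Similarly, $\pi_*(\M'_1\boxtimes\M'_2)$ is the direct image of $\M_1\boxtimes\M_2$ under the graph embedding of $f=f_1{+}f_2$. Hence $\varphi_f(\M_1\boxtimes\M_2)=\varphi_p\pi_*(\M'_1\boxtimes\M'_2)$, restricted to $X_0=f_1^{-1}(0){\times}f_2^{-1}(0)$; away from $X_0$ but inside $f^{-1}(0)$ the right-hand side should vanish.

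Next I would apply Lemma~\ref{L4.7} with $f'$ the induced function on $C_{X_a}Y_a=Y_a$. This gives $\varphi_{p_a}\M'_a\simeq\varphi_{p_a}\Sp_{X_a}\M'_a$, and similarly on $Y$. By Remark~\ref{R4.2}, $\Sp_{X_a}\M'_a\in D^b(\MHM(Y_a,A)_{\mon})$. Theorem~\ref{T3.6} gives the comparison
\begin{equation*}
i_*i^*\Sp_X\pi_*(\M'_1\boxtimes\M'_2)\simeq\pi_*(\Sp_{X_1}\M'_1\boxtimes\Sp_{X_2}\M'_2),
\end{equation*}
where $i$ is the inclusion of $\{f_1=f_2\}{\times}S$. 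Restricting further to $X_0{\times}S$, the operation $i_*i^*$ becomes harmless, because $X_0\subset\{f_1=f_2\}$ and $\varphi_p$ only sees the fiber over $X_0$. Here I would use that $\varphi_p$ commutes with the restriction to $X_0$, which holds since $p$ is a projection $X{\times}S\to S$ compatible with base change in $X$. Proposition~\ref{P5.2} then gives
\begin{equation*}
\varphi_p\pi_*(\Sp\M'_1\boxtimes\Sp\M'_2)\simeq\varphi_{p_1}\Sp\M'_1\buildrel T\over\boxtimes\varphi_{p_2}\Sp\M'_2,
\end{equation*}
and Lemma~\ref{L4.7} again identifies the right-hand side with $\varphi_{f_1}\M_1\buildrel T\over\boxtimes\varphi_{f_2}\M_2$. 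Compatibility with the underlying isomorphism of Theorem~\ref{T2.5} follows from the compatibility statements in Lemma~\ref{L4.7}, Theorem~\ref{T3.6} and Proposition~\ref{P5.1}.

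The main obstacle is the support condition. Theorem~\ref{T3.6} only controls the specialization over $X'_1{\times}X'_2=f_1^{-1}(0){\times}f_2^{-1}(0)$, whereas $\varphi_f$ a priori lives on all of $f^{-1}(0)$. Here the hypothesis $\varphi_{f_a-c_a}\M_a=0$ for $c_a\ne0$ enters. By the underlying $A$-complex statement (Theorem~\ref{T2.5}, applied at points with $f_1=c,\ f_2=-c$, using the Thom--Sebastiani product of $\varphi_{f_1-c}$ and $\varphi_{f_2+c}$), the complex $\varphi_f(\M_1\boxtimes\M_2)$ is supported on $X_0$. Since the forgetful functor is faithful on supports, the same support statement holds in $D^b\MHM$. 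This reduces the comparison to $X_0$, where the chain of isomorphisms above applies. A secondary technical point is checking that the isomorphisms of Lemma~\ref{L4.7} are functorial in $D^b$, so that they respect the $T_s,N$-actions entering $\buildrel T\over\boxtimes$. For this I would argue termwise, using the exactness of $\Sp$, $\varphi$ and $\boxtimes$.
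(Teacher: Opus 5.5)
Your proposal is correct and is essentially the paper's proof: the paper simply chains Theorem~\ref{T3.6} (applied to the graph embeddings, with $i_*i^*$ realized via $g=f_1-f_2$), Lemma~\ref{L4.7} and Proposition~\ref{P5.2}, and your use of the hypothesis $\varphi_{f_a-c_a}\M_a=0$ is exactly the step the paper leaves implicit: it shows that $\varphi_f(\M_1\boxtimes\M_2)$ is supported on $X_0$, so the $i_*i^*$ in Theorem~\ref{T3.6} costs nothing. One small correction: $\varphi_p$ commutes with restriction in the $X$-direction here because $\Sp_X\pi_*(\M'_1\boxtimes\M'_2)$ is monodromical, so that $\psi_p$ is computed by restriction to $t=1$ (cf.\ \eqref{2.2}, \eqref{4.16}); being a projection is not enough, and the commutation need not hold for non-monodromical objects such as the graph direct image $\pi_*(\M'_1\boxtimes\M'_2)$ itself.
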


\begin{proof}
This follows from Theorem~\ref{T3.6}, Lemma~\ref{L4.7} and Proposition~\ref{P5.2}.
Note that $i_*i^*$ can be defined as in the proof of Theorem~\ref{T3.6} with $g\eq f_1{-}f_2$, since $s_2=t_1{-}t_2$.
\end{proof}

\end{document}